\documentclass[10pt]{article}
\usepackage[a4paper,margin=3cm]{geometry}
\usepackage[english]{babel}
\usepackage{amsmath}
\usepackage{amsthm, mathtools, amssymb}
\usepackage{amsfonts}
\usepackage{graphicx}
\usepackage{epstopdf}
\usepackage{stmaryrd}
\usepackage{tikz-cd}
\tikzcdset{ arrow style= math font }
\usepackage[hidelinks]{hyperref}
\usepackage{enumitem}
\usepackage{cite}
\usepackage{bm}
\usepackage[export]{adjustbox}
\numberwithin{equation}{section}
\usepackage{mathrsfs} % caligraphic math font
\usepackage{cleveref} % so I don't have to write Theorem \ref{...}
\usepackage{pgfplots}
\usepackage{tikz}

\newcommand{\tx}[1]{\makebox[\widthof{$H(\Omega^3, \nabla \times)$}]{$#1$}} %For the MD-de Rham complex
\newcommand{\Bbbd}{\textrm{\fontencoding{U}\fontfamily{bbold}\selectfont d}}
\newcommand{\Tr}{{\operatorname{Tr}}}

\newcommand{\myd}{\text{d}}

\newcommand{\vertiii}[1]{{\left\vert\kern-0.25ex\left\vert\kern-0.25ex\left\vert #1 \right\vert\kern-0.25ex\right\vert\kern-0.25ex\right\vert}}

\newtheorem{theorem}{Theorem}[section]
\newtheorem{corollary}{Corollary}[section]
\newtheorem{remark}{Remark}[section]
\newtheorem{lemma}{Lemma}[section]

\tikzcdset{ arrow style=tikz, diagrams={>={Straight Barb[scale=1]}} }

\title{Mixed-Dimensional Auxiliary Space Preconditioners}
\author{Ana Budi\v{s}a\footnotemark[1] , Wietse M.~Boon\footnotemark[2] , Xiaozhe Hu\footnotemark[3] }
\date{}

\begin{document}
\maketitle

\footnotetext[1]{Department of Mathematics, University of Bergen, Allegaten 41, P. O. Box 7803, N-5020 Bergen, Norway.
	\href{mailto:Ana.Budisa@uib.no}{Ana.Budisa@uib.no}
}
\footnotetext[2]{Department of Mathematics, KTH Royal Institute of Technology, Lindstedtsv\"{a}gen 25, 10044 Stockholm, Sweden.
}
\footnotetext[3]{Department of Mathematics, Tufts University, 503 Boston Ave, Medford, MA 02155, USA.
}

\begin{abstract}
	This work introduces nodal auxiliary space preconditioners for discretizations of mixed-dimensional partial differential equations. We first consider the continuous setting and generalize the regular decomposition to this setting. With the use of conforming mixed finite element spaces, we then expand these results to the discrete case and obtain a decomposition in terms of nodal Lagrange elements. In turn, nodal preconditioners are proposed analogous to the auxiliary space preconditioners of Hiptmair and Xu \cite{hiptmair:xu}. Numerical experiments show the performance of this preconditioner in the context of flow in fractured porous media.
\end{abstract}

\section{Introduction} 
\label{sec:introduction}
In recent work~\cite{boon:md2018,nordbotten2017modeling, boon:flow2018}, exterior calculus and its finite element discretization has been extended to the mixed-dimensional geometries.  More precisely, for an $n$-dimensional domain, sub-manifolds of dimension $n-1$ and their intersections of dimension $n-2$, $n-3$, and so on are considered.   Suitable spaces of alternating $k$-forms are introduced and equipped with proper inner products and norms.  Based on well-defined differential operators and codifferential operators, a de Rham complex for the mixed-dimensional geometry is proposed as well.  Such a generalization of fixed-dimensional finite element exterior calculus~\cite{arnold:falk:winther_FEEC} provides a unified theoretical framework for mixed-dimensional partial differential equations (PDEs) as well as their finite element discretizations. This has wide applications in mathematical modeling and simulation, e.g., shells, membranes, fractures, and geological formations~\cite{bear2012hydraulics,ciarlet1997mathematical,nordbotten2011geological}.  

One important result in the fixed-dimensional finite element exterior calculus is the stable regular decomposition and its discrete variant~\cite{hiptmair2002finite,hiptmair:xu}.  Understanding the stable regular decompositions is at the heart of designing robust preconditioners for solving $H(\nabla \times)$- and $H(\nabla \cdot)$-elliptic problems based on the auxiliary space preconditioning framework~\cite{nepomnyaschikh1992decomposition,xu1996auxiliary}.  Based on the discrete regular decomposition, preconditioners for $H(\nabla \times)$- and $H(\nabla \cdot)$-elliptic problems can be developed, which consists of solving several $H(\nabla)$-elliptic problems and simple smoothing steps in the original space. Numerical results~\cite{KolevVassilevski2012,KolevVassilevski2009} have shown the effectiveness of such preconditioners. 

In this work, we extend the stable regular decomposition to the mixed-dimensional geometries.  Unlike the fixed-dimensional case, where the stable regular decomposition is usually derived based on the corresponding regular inverse, in the mixed-dimensional setting, we construct the regular decomposition directly by establishing such a regular decomposition on each individual sub-manifold and then combining them together properly.  Discrete regular decomposition is also generalized to the mixed-dimensional geometries.  The construction of the discrete version is similar with the fixed-dimensional counterpart.  The resulting discrete regular decomposition also involves an extra high-frequency term comparing with the stable regular decomposition as expected.  Based on discrete regular decomposition and auxiliary space preconditioning framework, we are able to develop robust preconditioners for solving abstract model mixed-dimensional PDEs~\eqref{eq:md_problem}.

In order to demonstrate the effectiveness of the proposed auxiliary space preconditioner for solving mixed-dimensional PDEs, we consider flow in fractured porous media as an example, which is modeled by Darcy's law and conservation of mass in the mixed-dimensional setting. After discretization, robust block preconditioners are designed based on the well-posedness of the discrete PDEs based on the framework developed in~\cite{loghin:wathen,mardal:winther}.  The mixed-dimensional auxiliary space preconditioner is used to invert one of the diagonal blocks in the block preconditioners.  The effectiveness of the preconditioners are verified both theoretically and numerically. 

The rest of the paper is organized as follows.  \Cref{sec:preliminaries} introduces the mixed-dimensional geometries and function spaces.  Mixed-dimensional regular decomposition is derived in \Cref{sec:regular_decomposition} and the discrete version is proposed in \Cref{sec:discretization}.  In \Cref{sec:preconditioner}, we describe the mixed-dimensional auxiliary space preconditioner for abstract mixed-dimensional PDEs and an example, flow in fractured porous media, is introduced in \Cref{sec:practical example}.  Numerical results are shown in \Cref{sec:examples} to demonstrate the robustness and effectiveness of the proposed preconditioners, and the conclusions are given in \Cref{sec:conclusion}.

\section{Preliminaries}
\label{sec:preliminaries}

In this section, we first introduce the definition of a mixed-dimensional geometry and the conventions used when referring to certain structures. Next, we summarize the relevant concepts from functional analysis for the fixed-dimensional case as well as the generalization to the mixed-dimensional setting. For a more rigorous and detailed exposition of these results, we refer the readers to \cite{boon:md2018}.

\subsection{Geometry}
\label{sub:geometry}

Given a contractible Lipschitz domain $Y \subset \mathbb{R}^n$ with $n \le 3$. Within $Y$, we introduce disjoint manifolds $\Omega_i^{d_i}$ with $i$ being the index from a global set $I$ and $d_i$ being the dimension. The superscript generally is omitted. Let $I^d$ be the subset of $I$ containing all indices $i$ with $d_i = d$. 

We refer to the union of all manifolds $\Omega_i$ as the mixed-dimensional geometry $\Omega$ and denote the subset of $d$-manifolds as $\Omega^d$, i.e.
\begin{align*}
	\Omega &:= \bigcup_{i \in I} \Omega_i, & 
	\Omega^d &:= \bigcup_{i \in I^d} \Omega_i.
	% \Gamma^d &= \bigcup_{i \in I^{d + 1}} \bigcup_{j \in I_i^d} \partial_j \Omega_i.
\end{align*}
For each $\Omega_i$ with $i \in I$, we form a connection to each lower-dimensional manifold that coincides with (a portion of) its boundary. Each of these connections is endowed with a unique index $j$. Then, let $i_j$ be the index of the lower-dimensional manifold such that $\Omega_{i_j} \subseteq \partial \Omega_i$. We denote $\partial_j \Omega_i$ as the corresponding boundary of dimension $d_j := d_{i_j}$.

For each $i \in I$ and $d < d_i$, we define $I_i^d$ as the set of indices $j$ such that $\partial_j \Omega_i$ is $d$-dimensional. Moreover, let $I_i$ contain all indices $j$ for which $\partial_j \Omega_i$ is not empty:
\begin{align*}
	I_i^d &:= \left\{ j :\ \exists i_j \in I^d \text{ such that } \partial \Omega_i \cap \Omega_{i_j} \ne \emptyset \right\}, &
	I_i &:= \bigcup_{d = 0}^{d_i -1} I_i^d.
\end{align*} 

To exemplify a mixed-dimensional geometry $\Omega$, let us consider Figure~\ref{fig:figure1} and its corresponding index sets $I_i^d$. In this case, we have $I_2^1 = \{ 12, 13 \}$ with $i_{12} = 5$ and $i_{13} = 3$ and $I_2^0 = 16$ with $i_{16} = 6$. We note that two distinct portions of the boundary $\partial \Omega_1$ coincide geometrically with $\Omega_4$. This is represented by the two indices $\{ 9, 10 \} \subset I_1^1$ with $i_9 = i_{10} = 4$.

\begin{figure}[ht]
	\centering
	\includegraphics[width = \textwidth]{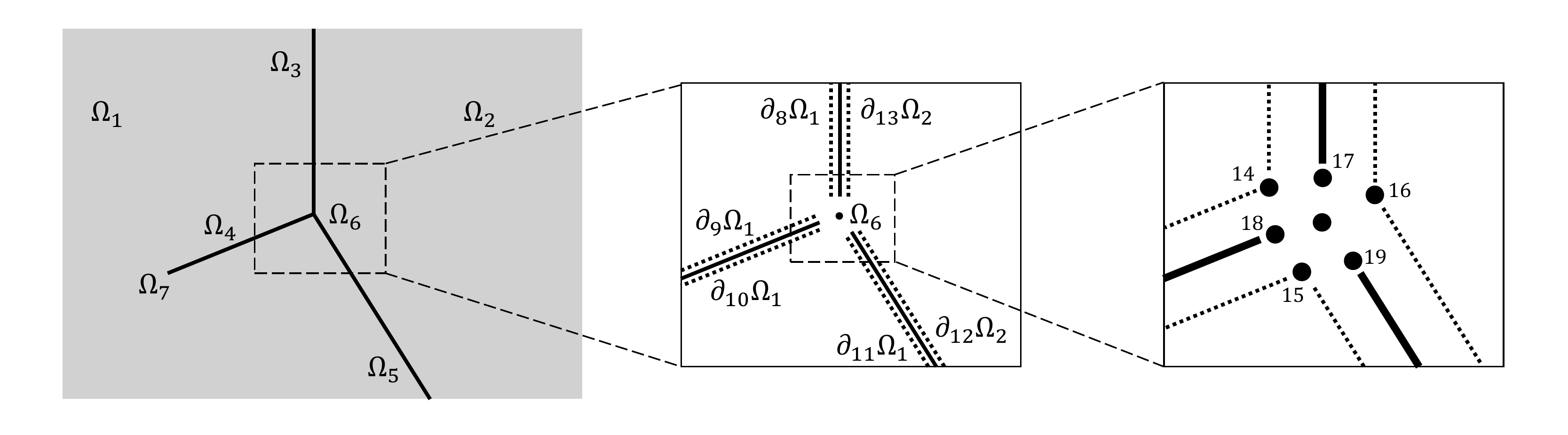}
	\caption{Example of a mixed-dimensional geometry with $n = 2$. On the left, $\Omega_i$ labels each $d_i$-manifold with $i \in I$. The index sets $I^d$ for this geometry are as follows. $I^2 = \{ 1, 2\}$ represents the 2-manifolds, $I^1 = \{ 3, 4, 5\}$ denotes the 1-manifolds and the 0-manifolds in this geometry have indices $i \in I^0 = \{ 6, 7 \}$. The middle of the figure illustrates the enumeration of boundaries $\partial_j \Omega_i$ for $i \in I^2$ and $j \in I_i^1$. On the right, the indices $j$ are shown with the property $i_j = 6$.}
	\label{fig:figure1}
\end{figure}

\subsection{Function Spaces}
\label{sub:function_spaces}

The next step is to define a function $\mathfrak{a}$ on the mixed-dimensional geometry. We do this using the language of exterior calculus \cite{spivak2018calculus}. We first introduce local function spaces on each subdomain which form the building blocks for the mixed-dimensional generalization.

For $i \in I$, let $\Lambda^k(\Omega_i)$ denote the space of differential $k$-forms on $\Omega_i$. Let $L^2 \Lambda^k(\Omega_i)$ denote the space of square integrable $k$-forms and $H \Lambda^k(\Omega_i)$ denote its subspace of forms with square integrable differential. In other words, let
%\begin{subequations*}
\begin{align*}
	L^2 \Lambda^k(\Omega_i) 
	&:= \{ a_i \in \Lambda^k(\Omega_i) :\ \| a_i \|_{L^2(\Omega_i)} < \infty \},
	\\
	H \Lambda^k(\Omega_i) 
	&:= \{ a_i \in L^2 \Lambda^k(\Omega_i) :\ \myd a_i \in L^2 \Lambda^{k + 1}(\Omega_i) \}.
\end{align*}
%\end{subequations*}

With the exterior derivative $\myd$, the spaces $H \Lambda^k$ form a cochain complex, known as the de Rham complex:
\begin{equation*}
	\begin{tikzcd}
	H \Lambda^0(\Omega_i) \arrow[r,"\myd"]
	& H \Lambda^1(\Omega_i) \arrow[r,"\myd"]
	& \cdots \arrow[r,"\myd"]
	& H \Lambda^{d_i - 1}(\Omega_i) \arrow[r,"\myd"]
	& H \Lambda^{d_i}(\Omega_i).
	\end{tikzcd}
\end{equation*}
We often use the correspondence of this complex to conventional Sobolev spaces. For $d_i = 3$, this representation of the de Rham complex is given by
\begin{equation} \label{eq: FD de Rham represent}
	\begin{tikzcd}
	H(\nabla, \Omega_i) \arrow[r,"\nabla"]
	& H(\nabla \times, \Omega_i) \arrow[r,"\nabla \times"]
	& H(\nabla \cdot, \Omega_i) \arrow[r,"\nabla \cdot"]
	& L^2(\Omega_i).
	\end{tikzcd}
\end{equation}
Here, $L^2(\Omega_i)$ is the space of square-integrable functions on $\Omega_i$ and the space $H(\nabla, \Omega_i)$ is its subspace of functions with square-integrable gradients, typically denoted by $H^1(\Omega_i)$. The spaces $H(\nabla \times, \Omega_i)$ and $H(\nabla \cdot, \Omega_i)$ are defined analogously.

We use the local spaces $ L^2 \Lambda^k $ and $H \Lambda^k$ to introduce the Sobolev spaces containing mixed-dimensional differential $k$-forms on $\Omega$. For brevity, we omit the reference to the geometry and define
\begin{align*}
	L^2 \mathfrak{L}^k
	&:= \prod_{i \in I} \{ a_i \in L^2 \Lambda^{k_i}(\Omega_i): 
	\Tr_j a_i \in L^2 \Lambda^{k_i}(\Omega_{i_j}), 
	\ \forall j \in I_i \}, \\
	H \mathfrak{L}^k
	&:= \prod_{i \in I} \{ a_i \in H \Lambda^{k_i}(\Omega_i): 
	\Tr_j a_i \in H \Lambda^{k_i}(\Omega_{i_j}), 
	\ \forall j \in I_i \},
\end{align*}
with $k_i := d_i - (n - k)$. Here, and in the following, we interpret $\Lambda^k(\Omega_i)$ as zero for $k <0$ and $k > d_i$. 
Thus, we emphasize that $H \mathfrak{L}^k$ is zero on manifolds $\Omega_i$ with $d_i < n - k$. The operator $\Tr_j$ is a trace operator that restricts a form $a_i$ to $\partial_j \Omega_i$. We emphasize that for a given $\mathfrak{a} = (a_i)_{i \in I} \in H \mathfrak{L}^k$, the component $a_i$ has a well-defined trace on each $\partial_j \Omega_i$ for $j \in I_i$ with $k_i \le d_j < d_i$ by definition. 
% In the special case of $j \in I_i^{k_i}$, this implies a well-defined trace onto manifolds of codimension $n - k$.

The Gothic font is used to denote a mixed-dimensional differential form $\mathfrak{a} \in H \mathfrak{L}^k$ and we revert to classic fonts with a subscript $i$ to denote its component defined on $\Omega_i$. In the analysis, we often use the corresponding restriction operator $\iota_i$ defined such that	
\begin{align*}
	\iota_i \mathfrak{a} = a_i.
\end{align*}

Next, we define the jump operator $\Bbbd: H \mathfrak{L}^k \mapsto H \mathfrak{L}^{k + 1}$. For each $i \in I$, let
%\textcolor{red}{
\begin{align*}
	\iota_i \Bbbd \mathfrak{a} = (-1)^{n - k} \sum_{l \in I^{d_i + 1}} \sum_{\{j \in I_{l} :\ i_j = i \}} \Tr_j a_l.   
\end{align*}
%}
For more details on the definition of $\Bbbd$, we refer to \cite{boon:md2018}. The mixed-dimensional differential $\mathfrak{d}$ is formed as the sum of $\Bbbd$ and the exterior derivative $\myd$ such that
\begin{align*}
	\iota_i \mathfrak{da} 
	&= \myd a_i + \iota_i \Bbbd \mathfrak{a}, &
	i &\in I.
\end{align*}

We introduce the following norms for $\mathfrak{a} \in H \mathfrak{L}^k$:
%\begin{subequations}
\begin{align*}
	\| \mathfrak{a} \|_{L^2 \mathfrak{L}^k}^2 &:= 
	\sum_{i \in I} \| a_i \|_{L^2(\Omega_i)}^2
	+ \sum_{d = k_i}^{d_i} \sum_{j \in I_i^d} 
	\| \Tr_j a_i \|_{L^2(\Omega_{i_j})}^2, \\
	\| \mathfrak{a} \|_{H \mathfrak{L}^k}^2 &:= 
	\| \mathfrak{a} \|_{L^2 \mathfrak{L}^k}^2
	+ \| \mathfrak{da} \|_{L^2 \mathfrak{L}^{k + 1}}^2.
\end{align*}
%\end{subequations}
The inner products that naturally induce these norms are denoted by $(\cdot, \cdot)_{L^2 \mathfrak{L}^k}$ and $(\cdot, \cdot)_{H \mathfrak{L}^k}$, respectively.
The spaces $H \mathfrak{L}^k$ form a cochain complex which we refer to as the mixed-dimensional de Rham complex: 
\begin{equation} \label{eq: De Rham complex L}
	\begin{tikzcd}
	H \mathfrak{L}^0 \arrow[r,"\mathfrak{d}"]
	& H \mathfrak{L}^1 \arrow[r,"\mathfrak{d}"]
	& \dots \arrow[r,"\mathfrak{d}"]
	& H \mathfrak{L}^{n - 1} \arrow[r,"\mathfrak{d}"]
	& H \mathfrak{L}^n.
	\end{tikzcd}
\end{equation}
This complex has several key properties, which we present in the following Lemma. 
\begin{lemma}
	The complex \eqref{eq: De Rham complex L} satisfies the following:
	\begin{itemize}
		\item All exact forms are closed: each $\mathfrak{a} \in H \mathfrak{L}^k$ satisfies
		\begin{align} \label{eq: closedness}
			\mathfrak{d(da)} &= 0.
		\end{align}
		\item All closed forms are exact: for each $\mathfrak{a} \in H \mathfrak{L}^k$ with $\mathfrak{da} = 0$, there exists a $\mathfrak{b} \in H \mathfrak{L}^{k - 1}$ such that
		\begin{align} \label{eq: exactness}
			\mathfrak{db} &= \mathfrak{a}, &
			\| \mathfrak{b} \|_{H \mathfrak{L}^{k - 1}} &\lesssim \| \mathfrak{a} \|_{H \mathfrak{L}^k}
		\end{align}
	\end{itemize}
\end{lemma}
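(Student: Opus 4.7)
The plan is to establish the two properties separately, handling closedness by direct verification and exactness by a descending induction over the dimensions of the sub-manifolds.

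For closedness, I would decompose $\mathfrak{d} = \myd + \Bbbd$ and compute
\begin{equation*}
\mathfrak{d}^2 = \myd^2 + (\myd \Bbbd + \Bbbd \myd) + \Bbbd^2.
\end{equation*}
The first term vanishes by the classical property of the exterior derivative on each $\Omega_i$. For the cross term, I would apply $\myd$ componentwise to the defining formula of $\Bbbd$, use that $\myd$ commutes with the trace $\Tr_j$, and cancel against $\Bbbd \myd \mathfrak{a}$ via the sign $(-1)^{n-k}$ built into the definition of $\Bbbd$ (which flips when we move between degree $k$ and $k+1$). The square $\Bbbd^2 \mathfrak{a}$ vanishes because each term in the double sum is a trace of a trace $\Tr_{j'} \Tr_j a_l$ across two nested boundary connections, and the sign conventions are chosen precisely so that the two orderings of these compositions cancel; this is exactly the content worked out in \cite{boon:md2018}, which may simply be cited.

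For exactness, I would build $\mathfrak{b}$ dimension by dimension, from the top down. Order $I$ by $d_i$ and process $d = n, n-1, \dots, k$ in turn (the components of $\mathfrak{b}$ on manifolds with $d_i < n-(k-1)$ are forced to be zero by definition). At dimension $d = n$, we have $\iota_i \Bbbd \mathfrak{a} = 0$ since there is no higher sub-manifold, so $\myd a_i = 0$ and I can apply a bounded Poincaré-type right inverse on the contractible Lipschitz subdomain $\Omega_i$ to obtain $b_i \in H \Lambda^{k_i - 1}(\Omega_i)$ with $\myd b_i = a_i$ and $\|b_i\|_{H\Lambda} \lesssim \|a_i\|_{L^2 \Lambda}$. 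For the inductive step with $i \in I^d$, the components $b_l$ for $l \in I^{d+1}$ are already defined, and I would set
\begin{equation*}
c_i := a_i - \iota_i \Bbbd \mathfrak{b},
\end{equation*}
so that we only need to solve $\myd b_i = c_i$ on $\Omega_i$. The key consistency check is that $\myd c_i = 0$: using $\myd a_i = -\iota_i \Bbbd \mathfrak{a}$ (from $\mathfrak{da}=0$), substituting $a_l = \myd b_l + \iota_l \Bbbd \mathfrak{b}$ for $l \in I^{d+1}$ (from the induction hypothesis), and invoking $\Bbbd^2 = 0$ together with the anticommutation $\myd \Bbbd + \Bbbd \myd = 0$ proved in the first part, all terms cancel. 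A bounded right inverse of $\myd$ on $\Omega_i$ then produces $b_i$; its trace on each $\partial_j \Omega_i$ inherits the required regularity because the right inverse can be chosen to map into $H \Lambda^{k_i - 1}(\Omega_i)$ and traces of such forms are well-defined in $H \Lambda^{k_i - 1}(\Omega_{i_j})$.

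The main obstacle I anticipate is the norm bound, which must be propagated through the induction without picking up uncontrolled constants. At each step, $\|b_i\|_{H\Lambda^{k_i - 1}(\Omega_i)}$ is controlled by $\|c_i\|_{L^2 \Lambda^{k_i}(\Omega_i)}$, which in turn depends on $\|a_i\|$ and on the traces $\Tr_j b_l$ of the previously constructed higher-dimensional components. The latter are bounded by $\|b_l\|_{H\Lambda^{k_l-1}(\Omega_l)}$ through the continuity of the trace, and therefore ultimately by the mixed-dimensional norm $\|\mathfrak{a}\|_{H \mathfrak{L}^k}$. Since $I$ is finite and the chain of dependencies has length at most $n+1$, iterating this estimate yields $\|\mathfrak{b}\|_{H\mathfrak{L}^{k-1}} \lesssim \|\mathfrak{a}\|_{H\mathfrak{L}^k}$, completing the proof. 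An implicit but essential standing assumption is that every individual $\Omega_i$ is a Lipschitz contractible (or cohomologically trivial) sub-manifold so that the bounded Poincaré right inverse is available at each step.
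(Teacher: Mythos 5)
The paper offers no proof of this lemma beyond the citation to \cite{boon:md2018}, so your attempt is a reconstruction of that reference's argument, and its skeleton is the right one: closedness via $\myd^2=0$, the anticommutation $\myd\Bbbd+\Bbbd\myd=0$ (which indeed follows from the degree-dependent sign $(-1)^{n-k}$ together with the commutation of $\myd$ and $\Tr_j$), and $\Bbbd^2=0$ deferred to the reference; exactness via a top-down sweep in which $b_i$ solves $\myd b_i = a_i-\iota_i\Bbbd\mathfrak{b}$, with the solvability condition $\myd(a_i-\iota_i\Bbbd\mathfrak{b})=0$ checked exactly as you describe. There are, however, two genuine gaps in the exactness part.

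First, trace regularity. Membership of $\mathfrak{b}$ in $H\mathfrak{L}^{k-1}$ requires $\Tr_j b_i\in H\Lambda^{k_i-1}(\Omega_{i_j})$ for \emph{every} $j\in I_i$, including boundary parts of codimension two and three. A bounded right inverse of $\myd$ landing merely in $H\Lambda^{k_i-1}(\Omega_i)$ does not deliver this: for example, in the case $k=n$ the potential $b_i$ is a $(d_i-1)$-form, i.e.\ an $H(\nabla\cdot,\Omega_i)$-type field whose trace on $\partial_j\Omega_i$ is the normal trace and lies only in $H^{-1/2}$, not in $L^2(\Omega_{i_j})$, let alone in $H\Lambda^{d_i-1}(\Omega_{i_j})$; on codimension-$\ge 2$ strata such traces need not exist at all. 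One must either use a regular (Poincar\'e--Costabel--McIntosh) inverse into $H^1\Lambda^{k_i-1}(\Omega_i)$ \emph{and} separately control the tangential differential of the resulting trace, or construct the potentials with prescribed boundary data from the lower strata upward; this is precisely the technical content of the proof in \cite{boon:md2018} that your one-line remark replaces. Second, the bottom stratum. For $i\in I^{n-k}$ the component $b_i$ is a $(-1)$-form, hence identically zero, so the equation $\iota_i\mathfrak{d}\mathfrak{b}=a_i$ degenerates to the constraint $\iota_i\Bbbd\mathfrak{b}=a_i$, which cannot be solved for $b_i$ but must be \emph{verified}. Your consistency computation only shows that the residual $a_i-\iota_i\Bbbd\mathfrak{b}$ is closed, i.e.\ locally constant; eliminating these constants requires exploiting the remaining gauge freedom (additive constants) in the $0$-form potentials one dimension up and the contractibility of the ambient domain $Y$ --- a global cohomological step that a purely local, stratum-by-stratum induction does not supply. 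Your propagation of the norm bound through the finite hierarchy is fine once these two points are repaired.
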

\begin{proof}
	The proof can be found in \cite{boon:md2018}.
\end{proof}

We represent this complex in terms of local spaces for each dimension. For $n = 3$, these local spaces are then organized in the following diagram:
\begin{equation} \label{eq: De Rham complex}
	\begin{tikzcd}
	H \mathfrak{L}^0 \arrow[d,"\mathfrak{d}"]  
	& \tx{H(\nabla, \Tr; \Omega^3)} \arrow[d,"\nabla"]\arrow[rd,"\Bbbd"] \\
	H \mathfrak{L}^1 \arrow[d,"\mathfrak{d}"] 
	& \tx{H(\nabla \times, \Tr; \Omega^3)} \arrow[d,"\nabla \times"]\arrow[rd,"\Bbbd"]
	& \tx{H(\nabla^\perp, \Tr; \Omega^2)} \arrow[d,"\nabla^\perp"]\arrow[rd,"\Bbbd"] \\
	H \mathfrak{L}^2 \arrow[d,"\mathfrak{d}"] 
	& \tx{H(\nabla\cdot, \Tr; \Omega^3)} \arrow[d,"\nabla \cdot"]\arrow[rd,"\Bbbd"] 
	& \tx{H(\nabla\cdot, \Tr; \Omega^2)} \arrow[d,"\nabla \cdot"]\arrow[rd,"\Bbbd"] 
	& \tx{H(\nabla\cdot, \Tr; \Omega^1)} \arrow[d,"\nabla \cdot"]\arrow[rd,"\Bbbd"] \\
	H \mathfrak{L}^3 
	& \tx{L^2(\Omega^3)}
	& \tx{L^2(\Omega^2)}
	& \tx{L^2(\Omega^1)}
	& \tx{L^2(\Omega^0)}
	\end{tikzcd}
\end{equation}
Here, $\nabla \cdot$ denotes the divergence tangential to each manifold $\Omega_i$, regardless of dimension. The curl is denoted by $\nabla \times$ in three dimensions and is given by the rotated gradient $\nabla^\perp$ on two-dimensional manifolds. The operator $\nabla$ at the top of the diagram represents the gradient on three-dimensional subdomains.

The local function spaces are given by subspaces of conventional Sobolev spaces with extra trace regularity. By defining $\nu$ as the outward, unit normal vector on $\partial \Omega_i$ for $i \in I$, we define
	\begin{align*}
		H(\nabla \cdot, \Tr; \Omega_i)
		&:= \{ a_i \in H(\nabla \cdot, \Omega_i)
		:\ 
		&(\nu \cdot a_i)|_{\partial_j \Omega_i} 
		&\in L^2(\Omega_{i_j})
		, \ &&\forall j \in I_i^{d_i - 1} \},
		\\
		H(\nabla \times, \Tr; \Omega_i)
		&:= \{ a_i \in H(\nabla \times, \Omega_i) 
		:\ 
		&(-\nu \times a_i)|_{\partial_j \Omega_i} 
		&\in H(\nabla \cdot, \Tr; \Omega_{i_j})
		, \ &&\forall j \in I_i^{d_i - 1} \},
		\\
		H(\nabla^\perp, \Tr; \Omega_i)
		&:= \{ a_i \in H(\nabla^\perp, \Omega_i) 
		:\ 
		&(\nu^\perp a_i)|_{\partial_j \Omega_i} 
		&\in H(\nabla \cdot, \Tr; \Omega_{i_j})
		, \ &&\forall j \in I_i^{d_i - 1} \},
		\\
		H(\nabla, \Tr; \Omega_i)
		&:= \{ a_i \in H(\nabla, \Omega_i)
		:\ 
		&(a_i)|_{\partial_j \Omega_i} 
		&\in H(\nabla \times, \Tr; \Omega_{i_j})
		, \ &&\forall j \in I_i^{d_i - 1} \}.
	\end{align*}
The spaces in diagram \eqref{eq: De Rham complex} are then defined as the product of these spaces over all $i \in I^d$ for a given dimension $d$. 

\section{Regular Decomposition}
\label{sec:regular_decomposition}

The aim of this section is to show that the conventional regular decomposition of differential $k$-forms can be generalized to the mixed-dimensional setting. For that purpose, we first recall the fixed-dimensional regular decomposition in the continuous case. Then, we introduce the subspace of $H \Lambda^k$, that contains functions with higher regularity, and the analogous subspace of $H \mathfrak{L}^k$. In turn, this gives the main ingredients in the derivation of the mixed-dimensional regular decomposition.

\subsection{Fixed-dimensional Regular Decomposition} \label{sub:fixeddim_regular_decomposition}
We start with presenting the regular decomposition in the context of the de Rham complex \eqref{eq: FD de Rham represent}. Given $\Omega_i$ with $d_i = 3$, we follow the results in \cite{hiptmair:xu} and provide the regular decomposition of $ H(\nabla \cdot , \Omega_i) $ and $ H(\nabla \times, \Omega_i) $ in the following theorems.
\begin{theorem}[Regular decomposition of $ H(\nabla \times, \Omega_i) $] \label{thm:regular_Hcurl}
	For any $ \bm{q} \in H(\nabla \times, \Omega_i) $, there exist functions $ \bm{a} \in (H(\nabla, \Omega_i))^3 $ and $ c \in H(\nabla, \Omega_i) $ such that
	\begin{subequations}
		\begin{align}
		\bm{q} & = \bm{a} + \nabla c, \\
		\| \bm{a} \|_{H(\nabla, \Omega_i)} + \| c \|_{H(\nabla, \Omega_i)} & \lesssim \| \bm{q} \|_{H(\nabla \times, \Omega_i)}.
		\end{align}
	\end{subequations}
\end{theorem}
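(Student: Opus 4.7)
The plan is to follow the classical argument of Hiptmair and Xu, constructing the decomposition in two stages: first peel off a regular vector potential capturing all of $\nabla \times \bm{q}$, and then recognize the remainder as a gradient. I would open by extending $\bm{q}$ across $\partial \Omega_i$. Since $\Omega_i$ is a contractible Lipschitz domain, I embed it into a larger smooth, simply-connected domain $\widetilde{\Omega} \supset \overline{\Omega_i}$ (e.g.\ an open ball) and apply a Stein-type extension for $H(\nabla \times)$ to produce $\widetilde{\bm{q}} \in H(\nabla \times, \widetilde{\Omega})$ with $\|\widetilde{\bm{q}}\|_{H(\nabla \times, \widetilde{\Omega})} \lesssim \|\bm{q}\|_{H(\nabla \times, \Omega_i)}$. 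This extension step is the first nontrivial ingredient, but is standard for Lipschitz domains; alternatively, one may avoid it entirely by invoking the intrinsic regular potential result of Costabel--McIntosh directly on $\Omega_i$.

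In the second step I would build the regular part $\bm{a}$. Because $\nabla \cdot (\nabla \times \widetilde{\bm{q}}) = 0$ distributionally in $\widetilde{\Omega}$, the field $\nabla \times \widetilde{\bm{q}} \in L^2(\widetilde{\Omega})^3$ is divergence-free on a simply-connected smooth domain. Classical regular-potential theory (Bogovskii/Poincaré construction, or the Biot--Savart integral on a ball) then yields $\bm{a} \in (H^1(\widetilde{\Omega}))^3$ with
\begin{align*}
\nabla \times \bm{a} &= \nabla \times \widetilde{\bm{q}}, &
\|\bm{a}\|_{(H^1(\widetilde{\Omega}))^3} &\lesssim \|\nabla \times \widetilde{\bm{q}}\|_{L^2(\widetilde{\Omega})} \lesssim \|\nabla \times \bm{q}\|_{L^2(\Omega_i)}.
\end{align*}

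For the third step I examine the residual $\widetilde{\bm{q}} - \bm{a} \in L^2(\widetilde{\Omega})^3$. It is curl-free by construction, and because $\widetilde{\Omega}$ is simply connected the Poincaré lemma gives a potential $c \in H^1(\widetilde{\Omega})$ (unique up to a constant, which I fix by, say, zero mean) with $\nabla c = \widetilde{\bm{q}} - \bm{a}$. Using $\|\nabla c\|_{L^2} \le \|\widetilde{\bm{q}}\|_{L^2} + \|\bm{a}\|_{L^2}$ together with a Poincaré inequality on $\widetilde{\Omega}$ yields $\|c\|_{H(\nabla, \widetilde{\Omega})} \lesssim \|\bm{q}\|_{H(\nabla \times, \Omega_i)}$. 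Restricting $\bm{a}$ and $c$ back to $\Omega_i$ produces the claimed decomposition, and the $H^1$-continuity of restriction preserves the norm bound. To close the argument I verify $\bm{a}|_{\Omega_i} \in H(\nabla, \Omega_i)$ componentwise and combine the two norm estimates.

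The main obstacle is the extension step: constructing an $H(\nabla \times)$-bounded extension across a merely Lipschitz boundary requires either invoking an established extension operator or replacing the extension by an intrinsic argument. If one is uncomfortable with the extension, the cleaner route is to use the regular decomposition result of Hiptmair--Xu (Lemma 2.4 of \cite{hiptmair:xu}) or Costabel--McIntosh on $\Omega_i$ itself, in which case the proof reduces to invoking the regular vector potential on $\Omega_i$ and identifying the curl-free remainder as a gradient via contractibility of $\Omega_i$. The remaining steps — the Poincaré-type norm estimate for $c$ and collecting the bounds — are routine once the regular potential is in hand.
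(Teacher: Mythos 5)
Your argument is correct and is essentially the standard one: the paper does not prove this theorem itself but cites it from \cite{hiptmair:xu}, whose proof proceeds exactly as you describe (bounded $H(\nabla\times)$-extension to a ball, a regular $H^1$ vector potential for the divergence-free field $\nabla\times\widetilde{\bm{q}}$, and identification of the curl-free remainder as a gradient via simple connectedness, with the Poincar\'e-type bounds collected at the end). Your fallback of invoking Costabel--McIntosh intrinsically on $\Omega_i$ is also a legitimate shortcut; either route establishes the stated decomposition and bound.
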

\begin{theorem}[Regular decomposition of $ H(\nabla \cdot, \Omega_i) $] \label{thm:regular_Hdiv}
	For any $ \bm{q} \in H(\nabla \cdot, \Omega_i) $, there exist functions $ \bm{a}, \bm{c} \in (H(\nabla, \Omega_i))^3 $ such that
	\begin{subequations}
		\begin{align}
		\bm{q} & = \bm{a} + \nabla \times \bm{c}, \\
		\| \bm{a} \|_{H(\nabla, \Omega_i)} + \| \bm{c} \|_{H(\nabla, \Omega_i)} & \lesssim \| \bm{q} \|_{H(\nabla \cdot, \Omega_i)}.
		\end{align}
	\end{subequations}
\end{theorem}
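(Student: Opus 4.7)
The plan is to reduce Theorem~\ref{thm:regular_Hdiv} to the $H(\nabla\times)$ regular decomposition in Theorem~\ref{thm:regular_Hcurl} by first peeling off a gradient-type regular piece $\bm{a}$ that absorbs the divergence of $\bm{q}$, and then representing the divergence-free remainder as a curl of a vector potential whose regularity is in turn upgraded through Theorem~\ref{thm:regular_Hcurl}. The overall scheme mirrors the classical argument on contractible domains, but I would carry it out on an auxiliary contractible enlargement of $\Omega_i$ so that bounded right inverses of $\nabla\cdot$ and $\nabla\times$ are readily available.

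First I would embed $\Omega_i$ into a smooth contractible domain $B$ (an open ball suffices) and use a bounded extension operator $E \colon H(\nabla \cdot, \Omega_i) \to H(\nabla \cdot, B)$, obtained for instance by reflection and cutoff, to produce $\tilde{\bm{q}} := E \bm{q}$ with $\| \tilde{\bm{q}} \|_{H(\nabla \cdot, B)} \lesssim \| \bm{q} \|_{H(\nabla \cdot, \Omega_i)}$. I would then set $\bm{a} := \nabla \phi$, where $\phi$ solves $-\Delta \phi = -\nabla \cdot \tilde{\bm{q}}$ with homogeneous Dirichlet data on $\partial B$. Since $B$ is smooth, elliptic regularity yields $\bm{a} \in (H(\nabla, B))^3$ with $\| \bm{a} \|_{H(\nabla, B)} \lesssim \| \nabla \cdot \bm{q} \|_{L^2(\Omega_i)}$, and by construction $\nabla \cdot \bm{a} = \nabla \cdot \tilde{\bm{q}}$ in $B$.

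Next, the residual $\bm{r} := \tilde{\bm{q}} - \bm{a}$ is divergence-free in $B$. Because $B$ is contractible, a bounded right inverse of $\nabla\times$ (for example the Poincar\'e integral operator on a star-shaped domain) produces $\bm{\tilde{c}} \in H(\nabla \times, B)$ with $\nabla \times \bm{\tilde{c}} = \bm{r}$ and $\| \bm{\tilde{c}} \|_{H(\nabla \times, B)} \lesssim \| \bm{r} \|_{L^2(B)} \lesssim \| \bm{q} \|_{H(\nabla \cdot, \Omega_i)}$. Applying Theorem~\ref{thm:regular_Hcurl} to $\bm{\tilde{c}}$ on $B$ yields a splitting $\bm{\tilde{c}} = \bm{c} + \nabla \psi$ with $\bm{c} \in (H(\nabla, B))^3$ and $\| \bm{c} \|_{H(\nabla, B)} \lesssim \| \bm{\tilde{c}} \|_{H(\nabla \times, B)}$. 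Since $\nabla \times \nabla \psi = 0$, the identity $\nabla \times \bm{c} = \bm{r}$ holds in $B$.

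Restricting $\bm{a}$ and $\bm{c}$ back to $\Omega_i$ then gives the decomposition $\bm{q} = \bm{a} + \nabla \times \bm{c}$ together with the desired stability bound by composing the estimates obtained in each step. The main obstacle I anticipate is the vector-potential step: producing $\bm{\tilde{c}}$ with an $L^2 \to H(\nabla \times)$ bound requires a genuinely regular right inverse of the curl on $B$, typically built by integration along rays emanating from a point with respect to which $B$ is star-shaped. The extension step and the invocation of Theorem~\ref{thm:regular_Hcurl} are essentially formal once this ingredient is in place.
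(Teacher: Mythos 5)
Your argument is essentially correct, but note that the paper does not prove \Cref{thm:regular_Hdiv} at all: it is imported verbatim from \cite{hiptmair:xu}, so there is no in-paper proof to compare against. What you have written is a faithful reconstruction of the classical argument behind that cited result: extend to a contractible enlargement $B$, kill the divergence with a gradient $\bm{a}=\nabla\phi$ gaining one derivative by $H^2$-regularity on the smooth domain $B$, write the divergence-free remainder as a curl via a regular potential on the star-shaped domain, and upgrade the potential through the $H(\nabla\times)$ decomposition (applied on $B$, which is legitimate since \Cref{thm:regular_Hcurl} holds on any bounded Lipschitz domain). A pleasant feature of your route is that all potentials live on $B$, so no topological assumption on $\Omega_i$ is needed. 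Two ingredients deserve more care than your phrasing suggests. First, the bounded extension $E\colon H(\nabla\cdot,\Omega_i)\to H(\nabla\cdot,B)$ is genuinely nontrivial for Lipschitz domains; plain componentwise reflection does not preserve the divergence structure, and one should either invoke the universal extension for Sobolev spaces of differential forms (Hiptmair, Li, Zou) or construct the extension by solving an auxiliary Neumann problem in $B\setminus\overline{\Omega_i}$ matching the normal trace $\nu\cdot\bm{q}\in H^{-1/2}(\partial\Omega_i)$. Second, the Poincar\'e-type integral operator on a star-shaped domain in fact maps $L^2$ boundedly into $H^1$, so you could take $\tilde{\bm{c}}\in (H(\nabla,B))^3$ directly and skip the final invocation of \Cref{thm:regular_Hcurl}; as written, your detour through $H(\nabla\times,B)$ is harmless but redundant. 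With those two points made precise, the stability bound composes exactly as you claim.
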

Now, let $ H_0(\nabla \times, \Omega_i) $ and $ H_0(\nabla \cdot, \Omega_i) $ be the subspaces of $ H(\nabla \cdot , \Omega_i) $ and $ H(\nabla \times, \Omega_i) $, respectively, with zero trace on the boundary $ \partial \Omega_i $. Also, denote the vector function space $ \bm{H}_0(\nabla, \Omega_i) = \{ \bm{u} \in (H(\nabla, \Omega_i))^3, \bm{u}|_{\partial \Omega_i} = \bm{0} \} $. The "boundary aware" regular decompositions from \cite{hiptmair2019boundary} are given in the following theorems.

\begin{theorem}[Regular decomposition of $ H_0(\nabla \times, \Omega_i) $] \label{thm:regular_Hcurl_0}
	For any $ \bm{q} \in H_0(\nabla \times, \Omega_i) $, there exist functions $ \bm{a} \in \bm{H}_0(\nabla, \Omega_i) $ and $ c \in H_0(\nabla, \Omega_i) $ such that
	\begin{subequations}
		\begin{align}
		\bm{q} & = \bm{a} + \nabla c, \\
		\| \bm{a} \|_{\bm{H}_0(\nabla, \Omega_i)} + \| c \|_{H_0(\nabla, \Omega_i)} & \lesssim \| \bm{q} \|_{H_0(\nabla \times, \Omega_i)}.
		\end{align}
	\end{subequations}
\end{theorem}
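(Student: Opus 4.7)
The strategy is to reduce to the non-boundary regular decomposition of Theorem~\ref{thm:regular_Hcurl} via a zero extension to a larger domain, and then to correct the resulting nonzero boundary trace by a careful $H^2$-lifting back into $\Omega_i$.

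First I would embed $\Omega_i$ in a slightly larger Lipschitz domain $\tilde{\Omega}$ and extend $\bm{q}$ by zero to a field $\tilde{\bm{q}}$ on $\tilde{\Omega}$. Because the tangential trace of $\bm{q}$ vanishes on $\partial\Omega_i$, this extension satisfies $\tilde{\bm{q}} \in H(\nabla\times,\tilde{\Omega})$ with $\|\tilde{\bm{q}}\|_{H(\nabla\times,\tilde{\Omega})} = \|\bm{q}\|_{H_0(\nabla\times,\Omega_i)}$. Applying Theorem~\ref{thm:regular_Hcurl} on $\tilde{\Omega}$ produces $\tilde{\bm{a}}\in(H(\nabla,\tilde{\Omega}))^3$ and $\tilde{c}\in H(\nabla,\tilde{\Omega})$ with $\tilde{\bm{q}} = \tilde{\bm{a}} + \nabla\tilde{c}$ and the corresponding norm estimate. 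The crucial observation is that in the exterior region $\tilde{\Omega}\setminus\overline{\Omega_i}$ we have $\tilde{\bm{a}} = -\nabla\tilde{c}$, so $\nabla\tilde{c}$ inherits the $H^1$-regularity of $\tilde{\bm{a}}$ there, upgrading $\tilde{c}$ to $H^2$ in a one-sided neighborhood of $\partial\Omega_i$. Consequently $\tilde{c}|_{\partial\Omega_i}\in H^{3/2}(\partial\Omega_i)$ and $\partial_\nu\tilde{c}|_{\partial\Omega_i}\in H^{1/2}(\partial\Omega_i)$ (each taken from the exterior side), with norms controlled by $\|\bm{q}\|_{H_0(\nabla\times,\Omega_i)}$.

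The correction step is then to construct a lifting $c^{*}\in H^2(\Omega_i)$ whose Dirichlet and Neumann traces on $\partial\Omega_i$ match those of $\tilde{c}$, by a standard biharmonic-type extension. Setting $c := \tilde{c}|_{\Omega_i} - c^{*}$ and $\bm{a} := \tilde{\bm{a}}|_{\Omega_i} + \nabla c^{*}$ preserves the identity $\bm{q} = \bm{a} + \nabla c$ on $\Omega_i$, and the Dirichlet matching forces $c\in H_0(\nabla,\Omega_i)$. The tangential trace of $\bm{a}$ vanishes because it reduces to $-\nabla_{\tau}\tilde{c} + \nabla_{\tau} c^{*} = 0$, and the normal trace vanishes because $\tilde{\bm{a}}\cdot\nu = -\partial_\nu\tilde{c}$ from the exterior together with $\partial_\nu c^{*} = \partial_\nu\tilde{c}$ by construction. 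Hence $\bm{a}\in\bm{H}_0(\nabla,\Omega_i)$, and the required continuous dependence is obtained by chaining the norm-preserving extension, the estimate from Theorem~\ref{thm:regular_Hcurl}, the one-sided regularity upgrade, and the continuity of the biharmonic lifting.

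The main obstacle is the exterior regularity upgrade: leveraging the identity $\tilde{\bm{a}} = -\nabla\tilde{c}$ outside $\Omega_i$ to transfer the $H^1$-bound of $\tilde{\bm{a}}$ into an $H^2$-bound for $\tilde{c}$ near $\partial\Omega_i$, so that both Dirichlet and Neumann traces are available for the lifting. Handled with care, this exterior structure is precisely what enables the ``boundary-aware'' nature of the decomposition and distinguishes Theorem~\ref{thm:regular_Hcurl_0} from its non-boundary counterpart.
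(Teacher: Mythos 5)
The paper does not actually prove this statement; it simply imports it from \cite{hiptmair2019boundary}, so your argument should be judged as a self-contained proof rather than against an in-paper one. Your strategy is sound and is essentially the standard route to the boundary-aware decomposition: extend $\bm{q}$ by zero (legitimate precisely because the tangential trace vanishes, and indeed norm-preserving), apply \Cref{thm:regular_Hcurl} on the enlarged domain $\tilde{\Omega}$, observe that $\tilde{\bm{a}} = -\nabla \tilde{c}$ on $\tilde{\Omega}\setminus\overline{\Omega_i}$ forces $\tilde{c}\in H^2(\tilde{\Omega}\setminus\overline{\Omega_i})$ with controlled norm, and transplant the exterior Cauchy data of $\tilde{c}$ into $\Omega_i$ to kill both the trace of $c$ and the full (tangential and normal) trace of $\bm{a}$. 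The algebra of the correction step checks out: $\bm{a}+\nabla c = \tilde{\bm{a}}+\nabla\tilde{c} = \bm{q}$, the Dirichlet matching gives $c\in H_0(\nabla,\Omega_i)$, and since the tangential gradient on $\partial\Omega_i$ is determined by the Dirichlet trace while the normal component is matched explicitly, $\bm{a}|_{\partial\Omega_i}= -\nabla\tilde{c}|_{\partial\Omega_i}+\nabla c^*|_{\partial\Omega_i}=0$ as a full vector trace, which is what $\bm{H}_0(\nabla,\Omega_i)$ requires.

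The one step I would not let stand as written is the ``biharmonic-type lifting'' of the pair $\bigl(\tilde{c}|_{\partial\Omega_i},\,\partial_\nu\tilde{c}|_{\partial\Omega_i}\bigr)$ from $H^{3/2}(\partial\Omega_i)\times H^{1/2}(\partial\Omega_i)$. On a merely Lipschitz boundary these trace spaces are not the right ones: the trace map of $H^2$ does not surject onto $H^{3/2}$, the Cauchy data of $H^2$ functions form a constrained space of Whitney arrays, and a bounded two-trace lifting with that data is delicate. Fortunately you do not need it, because your data does not come from an abstract pair but from an honest $H^2$ function on the exterior collar: take a Stein (or Calder\'on) extension $E$ of $H^2(\tilde{\Omega}\setminus\overline{\Omega_i})$ to $H^2(\mathbb{R}^3)$ and set $c^* := (E\tilde{c})|_{\Omega_i}$. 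Since $\nabla E\tilde{c}\in (H^1(\mathbb{R}^3))^3$ has matching traces from both sides of $\partial\Omega_i$, the Dirichlet trace and the full gradient trace of $c^*$ automatically coincide with those of $\tilde{c}$ taken from the exterior, with $\|c^*\|_{H^2(\Omega_i)}\lesssim \|\tilde{c}\|_{H^2(\tilde{\Omega}\setminus\overline{\Omega_i})}\lesssim\|\bm{q}\|_{H_0(\nabla\times,\Omega_i)}$. With that substitution the chain of estimates closes and the proof is complete.
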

\begin{theorem}[Regular decomposition of $ H_0(\nabla \cdot, \Omega_i) $] \label{thm:regular_Hdiv_0}
	For any $ \bm{q} \in H_0(\nabla \cdot, \Omega_i) $, there exist functions $ \bm{a}, \bm{c} \in \bm{H}_0(\nabla, \Omega_i) $ such that
	\begin{subequations}
		\begin{align}
		\bm{q} & = \bm{a} + \nabla \times \bm{c}, \\
		\| \bm{a} \|_{\bm{H}_0(\nabla, \Omega_i)} + \| \bm{c} \|_{\bm{H}_0(\nabla, \Omega_i)} & \lesssim \| \bm{q} \|_{H_0(\nabla \cdot, \Omega_i)}.
		\end{align}
	\end{subequations}
\end{theorem}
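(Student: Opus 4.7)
The plan is to reduce the divergence-case decomposition to the curl-case decomposition (\Cref{thm:regular_Hcurl_0}) via a two-step construction: first absorb the divergence of $\bm{q}$ into a regular vector field $\bm{a}$ with vanishing boundary trace, and then apply a boundary-aware curl inverse to the remaining divergence-free, normal-trace-free remainder. Throughout, we rely on the assumption that $\Omega_i$ is a bounded Lipschitz domain with sufficiently simple topology (contractible, or at least simply connected with connected boundary) so that the exact complex with vanishing traces is available.

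\textbf{Step 1: Absorb the divergence.} Since $\bm{q} \in H_0(\nabla \cdot, \Omega_i)$, its normal trace vanishes on $\partial \Omega_i$, so integration by parts gives $\int_{\Omega_i} \nabla \cdot \bm{q} \, dx = 0$, i.e. $\nabla \cdot \bm{q} \in L^2_0(\Omega_i)$. By the Bogovski\u{\i} construction on the Lipschitz domain $\Omega_i$, there exists a continuous right inverse of the divergence mapping $L^2_0(\Omega_i) \to \bm{H}_0(\nabla, \Omega_i)$. Applying this right inverse to $\nabla \cdot \bm{q}$ yields $\bm{a} \in \bm{H}_0(\nabla, \Omega_i)$ satisfying
\begin{align*}
  \nabla \cdot \bm{a} &= \nabla \cdot \bm{q}, &
  \| \bm{a} \|_{\bm{H}_0(\nabla, \Omega_i)} &\lesssim \| \nabla \cdot \bm{q} \|_{L^2(\Omega_i)} \le \| \bm{q} \|_{H_0(\nabla \cdot, \Omega_i)}.
\end{align*}

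\textbf{Step 2: Invert the curl on the remainder.} The difference $\bm{r} := \bm{q} - \bm{a}$ lies in $H_0(\nabla \cdot, \Omega_i)$ and is divergence-free. Under the topological assumption on $\Omega_i$, the complex with vanishing boundary traces is exact at $H_0(\nabla \cdot)$, so there exists $\bm{w} \in H_0(\nabla \times, \Omega_i)$ with $\nabla \times \bm{w} = \bm{r}$, and one can further select $\bm{w}$ orthogonal to the kernel of the curl so as to obtain the stability estimate $\| \bm{w} \|_{H_0(\nabla \times, \Omega_i)} \lesssim \| \bm{r} \|_{L^2(\Omega_i)} \lesssim \| \bm{q} \|_{H_0(\nabla \cdot, \Omega_i)}$. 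Now \Cref{thm:regular_Hcurl_0} applied to $\bm{w}$ produces $\bm{c} \in \bm{H}_0(\nabla, \Omega_i)$ and $\varphi \in H_0(\nabla, \Omega_i)$ with $\bm{w} = \bm{c} + \nabla \varphi$ and
\begin{align*}
  \| \bm{c} \|_{\bm{H}_0(\nabla, \Omega_i)} + \| \varphi \|_{H_0(\nabla, \Omega_i)} &\lesssim \| \bm{w} \|_{H_0(\nabla \times, \Omega_i)}.
\end{align*}
Since $\nabla \times \nabla \varphi = 0$, we obtain $\bm{r} = \nabla \times \bm{w} = \nabla \times \bm{c}$, giving the desired decomposition $\bm{q} = \bm{a} + \nabla \times \bm{c}$ with the stability estimate coming from chaining the bounds in Steps 1 and 2.

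The main obstacle is Step 2, specifically establishing a stable curl inverse subject to homogeneous essential boundary conditions. This requires both (i) a cohomological argument that a divergence-free field with zero normal trace is the curl of some $\bm{w} \in H_0(\nabla \times, \Omega_i)$ --- which hinges on the topology of $\Omega_i$ --- and (ii) a stability estimate, typically obtained by selecting $\bm{w}$ in the orthogonal complement of $\ker(\nabla \times) \cap H_0(\nabla \times, \Omega_i)$ and invoking a Poincar\'e-type inequality on that complement. If the topological hypothesis fails, one must allow an additional finite-dimensional harmonic correction, but the stability constant still depends only on $\Omega_i$. Step 1 is comparatively routine once Lipschitz regularity of $\Omega_i$ is assumed, and the final assembly is algebraic.
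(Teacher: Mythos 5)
Your argument is correct in substance, but it is worth noting that the paper does not prove this statement at all: Theorems~\ref{thm:regular_Hcurl_0} and~\ref{thm:regular_Hdiv_0} are imported verbatim from \cite{hiptmair2019boundary}, so there is no in-paper proof to compare against. Your route is the classical one: a Bogovski\u{\i} right inverse of the divergence into $\bm{H}_0(\nabla,\Omega_i)$ absorbs $\nabla\cdot\bm{q}$, the remainder is closed in the complex with essential boundary conditions and hence (given exactness) has a curl potential $\bm{w}\in H_0(\nabla\times,\Omega_i)$ stabilized by a Poincar\'e inequality on the orthogonal complement of the kernel, and \Cref{thm:regular_Hcurl_0} upgrades $\bm{w}$ to a regular potential $\bm{c}$ since the gradient part is annihilated by the curl. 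All individual ingredients are standard and the chaining of bounds is sound. What this buys is a short, self-contained derivation of the div-case from the curl-case; what it costs is exactly the caveat you flag: exactness of the complex with vanishing traces at the $H_0(\nabla\cdot)$ level requires $H_1(\Omega_i)=0$ (by Poincar\'e--Lefschetz duality the relevant cohomology with compact supports is $H^2_c(\Omega_i)\cong H_1(\Omega_i)$), and the paper's geometric setting only assumes the ambient domain $Y$ is contractible, not the individual sub-manifolds $\Omega_i$. The construction of \cite{hiptmair2019boundary}, based on universal extension operators and regularized Poincar\'e/Bogovski\u{\i}-type integral operators, avoids any topological hypothesis and, more importantly for the way \Cref{lem:local regular decomposition} actually uses these results, accommodates boundary conditions imposed only on \emph{parts} of $\partial\Omega_i$; your argument as written handles only the full-boundary case, and the finite-dimensional harmonic correction you mention would spoil the exact identity $\bm{q}=\bm{a}+\nabla\times\bm{c}$ unless it is absorbed into $\bm{a}$ (which can be done, since harmonic Neumann fields on a Lipschitz domain are smooth in the interior, but this requires an extra argument to place them in $\bm{H}_0(\nabla,\Omega_i)$ — they do \emph{not} have vanishing full trace, so strictly speaking the decomposition with both potentials in $\bm{H}_0(\nabla,\Omega_i)$ can fail on topologically nontrivial domains). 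Your proof is therefore a valid proof of the theorem under an explicit simple-connectivity assumption, but not of the general statement the paper cites.
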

Our derivation of the mixed-dimensional regular decomposition relies on the fact that the regular decompositions in above theorems are possible on the individual sub-manifolds $ \Omega_i $ and then combined together, by taking special care of the traces.

\subsection{Mixed-dimensional Regular Decomposition} \label{sub:mixeddim_regular_decomposition}

For $i \in I$, let us first introduce the subspace of $k$-forms with increased regularity, denoted by $H^1 \Lambda^{k} (\Omega_i) \subseteq H \Lambda^{k} (\Omega_i)$ such that
\begin{align*}
	H^1 \Lambda^{k} (\Omega_i) \cong (H(\nabla, \Omega_i))^{C_{d_i, k}}.
\end{align*} 
Here the notation $\cong$ means that the spaces are isomorphic. The exponent is given by the binomial coefficient 
$C_{d_i, k} := \left( \begin{smallmatrix} d_i \\ k \end{smallmatrix} \right)$,
which is the dimension of the space of differential $k$-forms on a $d_i$-manifold (see e.g. \cite{spivak2018calculus}, Thm 4-5). We consider the space as zero if the exponent is zero, e.g. if $k > d_i$.

With the local spaces defined, let $H^1 \mathfrak{L}^k \subseteq H \mathfrak{L}^k$ be the space of regular mixed-dimensional $k$-forms, given by
\begin{align} \label{eq: regular k-forms}
	H^1 \mathfrak{L}^k
	&:= \prod_{i \in I} \{ a_i \in H^1 \Lambda^{k_i}(\Omega_i): 
	\Tr_j a_i \in H^1 \Lambda^{k_i}(\Omega_{i_j}), 
	\ \forall j \in I_i \},
\end{align}
and endowed with the norm
\begin{align} \label{eq: regular norm}
	\| \mathfrak{a} \|_{H^1 \mathfrak{L}^k} &:= 
	\sum_{i \in I} \| a_i \|_{H^1 \Lambda^{k_i}(\Omega_i)}
	+ \sum_{j \in I_i} 
	\| \Tr_j a_i \|_{H^1 \Lambda^{k_i}(\Omega_{i_j})}.
\end{align}

We note two properties of the space $H^1 \mathfrak{L}^k$. First, in the special case of $i \in I^{n - k}$, we have $k_i = 0$. Since $H^1 \Lambda^0(\Omega_i) = H \Lambda^0(\Omega_i)$, it follows that
\begin{subequations} \label{eq: properties H1}
\begin{align} \label{eq: H1 = H for k_i=0}
	\iota_i H^1 \mathfrak{L}^k &= \iota_i H \mathfrak{L}^k, 
	& \forall i \in I^{n - k}. 
\end{align}
Secondly, the jump operator preserves the increased regularity of $H^1 \mathfrak{L}^k$:
\begin{align} \label{eq: jump in H1}
	\Bbbd H^1 \mathfrak{L}^k \subseteq H^1 \mathfrak{L}^{k + 1}.
\end{align}
\end{subequations}

%Next, we present the main result of this section, namely the mixed-dimensional regular decomposition. Its proof relies on the fact that such regular decompositions are possible on the individual sub-manifolds, which is shown subsequently in Lemma~\ref{lem:local regular decomposition}. 

The following lemma provides the local regular decompositions on each $ \Omega_i $ by using the known results in fixed-dimensional setting in \Cref{sub:fixeddim_regular_decomposition}.

\begin{lemma} \label{lem:local regular decomposition}
	Given $q_i \in \iota_i H \mathfrak{L}^k$ with $i \in I$, then there exists a pair $(a_i, c_i) \in \iota_i H^1 \mathfrak{L}^k \times \iota_i H^1 \mathfrak{L}^{k - 1}$ such that
	\begin{align*}
	q_i &= a_i + \emph{d} c_i
	& \text{and}&
	& \| a_i \|_{H^1 \mathfrak{L}^k} 
	+ \| c_i \|_{H^1 \mathfrak{L}^{k - 1}} 
	&\lesssim \| q_i \|_{H \mathfrak{L}^k}.
	\end{align*}
\end{lemma}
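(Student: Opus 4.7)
The plan is to combine the fixed-dimensional regular decompositions of \Cref{sub:fixeddim_regular_decomposition} with a boundary-lifting argument that enforces the extra trace regularity built into $\iota_i H^1 \mathfrak{L}^\bullet$. I will induct on the dimension $d_i$. The base case $d_i = 0$ is trivial, and whenever $k_i = 0$ property~\eqref{eq: H1 = H for k_i=0} gives $\iota_i H^1 \mathfrak{L}^k = \iota_i H \mathfrak{L}^k$, so taking $a_i = q_i$ and $c_i = 0$ works immediately. Since the lemma is stated for every mixed-dimensional degree $k$, the induction hypothesis on any $\Omega_{i_j}$ with $d_j < d_i$ can be invoked at any local form degree by shifting $k$ to a suitable $k'$ with $k'_{i_j} = k_i$.

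For the inductive step with $1 \le k_i \le d_i$, I first construct pieces matching the trace of $q_i$ and then kill the remainder via a zero-trace decomposition on $\Omega_i$. For each $j \in I_i$, the trace $\Tr_j q_i$ lies in $H \Lambda^{k_i}(\Omega_{i_j})$, and by the induction hypothesis applied on $\Omega_{i_j}$ I obtain $\Tr_j q_i = \alpha_j + \myd \gamma_j$ with $\alpha_j, \gamma_j$ in the corresponding $H^1$-type spaces and with the required sub-trace regularity. I then lift $\alpha_j$ and $\gamma_j$ to forms $\tilde\alpha_j, \tilde\gamma_j$ on $\Omega_i$ through an $H^1$-bounded extension operator that matches the traces on $\partial_j \Omega_i$ while preserving the sub-boundary trace regularity. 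Setting $a_i^b := \sum_{j \in I_i} \tilde\alpha_j$ and $c_i^b := \sum_{j \in I_i} \tilde\gamma_j$ produces $\Tr_j(a_i^b + \myd c_i^b) = \Tr_j q_i$ for every $j$.

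The remainder $q_i - a_i^b - \myd c_i^b$ now has vanishing trace on $\partial \Omega_i$, so \Cref{thm:regular_Hcurl_0} or \Cref{thm:regular_Hdiv_0}, together with their lower-dimensional analogues, supplies a zero-trace decomposition $a_i^0 + \myd c_i^0$ in the appropriate $H^1_0$ spaces. Taking $a_i := a_i^b + a_i^0$ and $c_i := c_i^b + c_i^0$ delivers the decomposition: by construction $\Tr_j a_i = \alpha_j$ and $\Tr_j c_i = \gamma_j$ lie in $H^1 \Lambda^\bullet(\Omega_{i_j})$. The norm bound follows by chaining the estimates, the induction hypothesis controlling $\alpha_j, \gamma_j$ by $\|\Tr_j q_i\|_{H \Lambda^\bullet}$, the extensions being bounded, and the zero-trace step being controlled by $\|q_i - a_i^b - \myd c_i^b\|_{H \Lambda^{k_i}(\Omega_i)}$.

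The hard part will be arranging the boundary lifts consistently at shared lower-dimensional sub-boundaries: when two codimension-one pieces $\partial_j \Omega_i$ and $\partial_{j'} \Omega_i$ meet along a common lower-dimensional interface, the extensions $\tilde\alpha_j$ and $\tilde\alpha_{j'}$ must not introduce spurious traces there, so the extension operators must be chosen with care (for instance using disjoint collar neighborhoods together with a partition of unity). The inductive bookkeeping must then verify that every sub-sub-boundary trace ends up in the right $H^1$ space, which is the genuine content beyond what the fixed-dimensional theorems already provide.
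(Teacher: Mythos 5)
Your architecture is the same as the paper's: decompose the traces of $q_i$ on the codimension-one boundary pieces using the lower-dimensional result, lift that boundary data into $\Omega_i$, and finish with a boundary-condition-respecting regular decomposition (\Cref{thm:regular_Hcurl_0}, \Cref{thm:regular_Hdiv_0} and their analogues) of the zero-trace remainder. The difference is that you leave the one genuinely nontrivial step --- consistency of the boundary data where two faces $\partial_{j}\Omega_i$ and $\partial_{j'}\Omega_i$ meet along a lower-dimensional interface --- as something to be ``arranged with care,'' and the fix you sketch would not work as stated. The paper closes this gap not by choosing clever extension operators but by making the lowest-dimensional building block canonical: in the $L^2$ case the pair is produced by the constrained minimization \eqref{eq: step 1} followed by a divergence equation posed in $H_0^1$, so both components vanish on the boundary of each face (in particular at tips and reentrant corners), and one level up the traces of $a_{j_1}$ and $a_{j_2}$ on a shared edge coincide because both equal the \emph{unique} construction on that edge. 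This is what lets the face-wise data glue into a single function in $H^1(\partial\Omega_i)$, which is then extended harmonically \emph{once}; the extension is $H^1$-regular precisely because the glued boundary datum vanishes at the singular points of $\partial\Omega_i$.

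Two concrete problems with your version. First, you define $a_i^b = \sum_{j}\tilde\alpha_j$ as a sum of separate extensions, so $\Tr_j a_i^b$ picks up contributions $\Tr_j\tilde\alpha_{j'}$ from the other faces; you acknowledge this, but the proposed remedy (collar neighborhoods plus a partition of unity) destroys the exactness you need, since multiplying $\tilde\gamma_j$ by a cutoff $\chi$ gives $\myd(\chi\tilde\gamma_j)=\chi\,\myd\tilde\gamma_j+\myd\chi\wedge\tilde\gamma_j$, so $\Tr_j(a_i^b+\myd c_i^b)$ no longer equals $\Tr_j q_i$ and the remainder fails to have zero trace. Second, without pinning down the lower-dimensional decomposition uniquely, there is no reason the inductively obtained $\alpha_j$ and $\alpha_{j'}$ agree on a shared sub-boundary at all, so no single-valued boundary datum exists to extend. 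Both issues disappear once you adopt the paper's device: a canonical, zero-trace construction at the bottom level, followed by gluing one boundary function and extending it once rather than summing face-wise extensions.
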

\begin{proof}
	We consider the four possible cases for $n \le 3$. With reference to diagram \eqref{eq: De Rham complex}, these cases are represented by the main diagonal (Case \ref{local case 0}) and the off-diagonal components in the three bottom rows (Cases \ref{local case 1}--\ref{local case 3}). 
	\begin{enumerate}[label=Case \Alph*:, ref=\Alph*]
		\item \label{local case 0}
		$k = n - d_i$. We note that this means that $k_i = 0$ and $\iota_i H \mathfrak{L}^k = \iota_i H^1 \mathfrak{L}^k$ by \eqref{eq: H1 = H for k_i=0}. Setting $a_i := q_i$ yields the result.
		\item \label{local case 1}
		$k = n, \ d_i > 0$. In this case, we have $q_i \in L^2(\Omega_i)$. We introduce $a_i \in H_0^1(\Omega_i)$ as the solution to the following minimization problem,
		\begin{subequations} \label{eq: step 1}
			\begin{align}
			\min_{a_i \in H_0^1(\Omega_i)} & \tfrac12 \| a_i \|_{1, \Omega_i}^2
			& \text{ subject to }
			\Pi_{\mathbb{R}, i} a_i &= \Pi_{\mathbb{R}, i} q_i,
			\end{align}
			with $\Pi_{\mathbb{R}, i}$ denoting the $L^2$-projection onto constants on $\Omega_i$.
			Secondly, we define the bounded $c_i \in (H_0^1(\Omega_i))^{d_i}$ such that
			\begin{align}
			\nabla \cdot c_i &= (I - \Pi_{\mathbb{R}, i}) (q_i - a_i).
			\end{align}
		\end{subequations}
		Since the divergence represents the exterior derivative $\myd$ in this case, it follows that $q_i = a_i + \myd c_i$ with $a_i \in \iota_i H^1 \mathfrak{L}^k$ and $c_i \in \iota_i H^1 \mathfrak{L}^{k - 1}$. Importantly, $a_i$ and $c_i$ have zero trace on $\partial \Omega_i$. This property will be advantageous in the remaining cases.
		
		\item \label{local case 2}
		$k = n - 1, \ d_i > 1$. In this case, we require $L^2$ regularity of traces on manifolds of codimension one. For $j \in I_i^{d_i - 1}$, let us denote
		\begin{align*}
		q_j := \Tr_j q_i.
		\end{align*}
		By definition of $H \mathfrak{L}^{n - 1}$, we have that $q_j \in L^2(\Omega_{i_j}) = L^2(\partial_j \Omega_i)$. Since $\partial_j \Omega_i$ is a manifold of dimension $d_i - 1$, we can use Case \ref{local case 1} to find $a_j \in H_0^1(\partial_j \Omega_i)$ and $c_j \in (H_0^1(\partial_j \Omega_i))^{d_i - 1}$ such that
		\begin{align*}
		q_j = a_j + \myd c_j.
		\end{align*}
		Note that both $a_j$ and $c_j$ have zero trace on the boundary of $\partial_j \Omega_i$. Hence, all $a_j$ (respectively $c_j$) can be combined to form a function in $H^1(\partial \Omega_i)$ (respectively $(H^1(\partial \Omega_i))^{d_i - 1}$). These boundary functions are extended harmonically into $\Omega_i$ to form $a_i^* \in \iota_i H^1 \mathfrak{L}^k$ and $c_i^* \in \iota_i H^1 \mathfrak{L}^{k - 1}$ such that
		\begin{align*}
		\Tr_j a_i^* &= a_j, &
		\Tr_j c_i^* &= c_j, &
		\forall j \in I_i^{d_i - 1}
		\end{align*}
		The regularity of these extensions in the domain $\Omega_i$ is a result of the fact that all $a_j$ and $c_j$ are zero at tips and reentrant corners.
		
		Next, we note that $q_i - (a_i^* + \myd c_i^*)$ has zero trace on $\partial_j \Omega_i$ for all $j \in I_i^{d_i - 1}$. Hence, we apply a regular decomposition respecting homogeneous boundary conditions to obtain
		\begin{align*}
		q_i - (a_i^* + \myd c_i^*)
		= a_i^0 + \myd c_i^0,
		\end{align*}
		such that $a_i^0 \in H^1 \Lambda^{k_i}(\Omega_i)$ and $c_i^0 \in H^1 \Lambda^{k_i - 1}(\Omega_i)$ have zero trace on each $\partial_j \Omega_i$ with $j \in I_i^{d_i - 1}$.
		It follows that $a_i^0 \in \iota_i H^1 \mathfrak{L}^k$ and $c_i^0 \in \iota_i H^1 \mathfrak{L}^{k - 1}$. We conclude the construction by setting $a_i := a_i^* + a_i^0$ and $c_i := c_i^* + c_i^0$.
		
		\item \label{local case 3}
		$k = n - 2, \ d_i > 2$. The only case not covered so far is $d_i = n = 3$. Following the same arguments as above, we first denote $q_j := \Tr_j q_i$ and then use the construction from Case \ref{local case 2} to obtain 
		\begin{align*}
		q_j &= a_j + \myd c_j, & j \in I_i^2.
		\end{align*}
		Next, without loss of generality, we let $j_1, j_2 \in I_i^2$ and consider the index $j_{1,2} \in I_i^1$ such that $\partial_{j_{1,2}} \Omega_i$ forms a one-dimensional interface between $\partial_{j_1} \Omega_i$ and $\partial_{j_2} \Omega_i$. It follows that 
		\begin{align*}
		\Tr_{j_{1,2}} a_{j_1}
		&= \Tr_{j_{1,2}} a_{j_2} &
		\Tr_{j_{1,2}} c_{j_1}
		&= \Tr_{j_{1,2}} c_{j_2}
		\end{align*}
		since both traces are equal to the unique constructions on $\partial_{j_{1,2}} \Omega_i$ from Case \ref{local case 1}, in particular \eqref{eq: step 1}. This means that, when there are more interfaces, by combining all $a_j$ with $j \in I_i^2$, a function is formed in $(H^1(\partial \Omega_i))^2$. Analogously, the combination of all $c_j$ with $j \in I_i^2$ forms a function in $H^1(\partial \Omega_i )$. 
		
		The construction is finalized in the same way as in Case \ref{local case 2}. In short, we first introduce a harmonic extension of the boundary functions to form $a_i^* \in \iota_i H^1 \mathfrak{L}^k$ and $c_i^* \in \iota_i H^1 \mathfrak{L}^{k - 1}$. Then, $a_i^0 \in \iota_i H^1 \mathfrak{L}^k$ and $c_i^0 \in \iota_i H^1 \mathfrak{L}^{k - 1}$ are constructed using a regular decomposition of $q_i - (a_i^* + \myd c_i^*)$ respecting homogeneous boundary conditions. Finally, we set $a_i := a_i^* + a_i^0$ and $c_i := c_i^* + c_i^0$.
	\end{enumerate}
\end{proof}

%\textcolor{red}{
\begin{remark}
	In the previous lemma, we have frequently used the fact that operators $ \Tr $ and $ \operatorname{d} $ commute. The derivation of this property for functions on each $ \Omega_i $ is straightforward so, to shorten the presentation, we leave it out and use the commutative property in the proofs when needed.
\end{remark}
%}

Now we are ready to present the main result of this section, namely the mixed-dimensional regular decomposition, in the following theorem. 

\begin{theorem}[Mixed-dimensional Regular Decomposition]
\label{thm: regular decomposition}
	Given $\mathfrak{q} \in H \mathfrak{L}^k$, then there exists a pair $(\mathfrak{a}, \mathfrak{c}) \in H^1 \mathfrak{L}^k \times H^1 \mathfrak{L}^{k - 1}$ such that
	\begin{align*}
		\mathfrak{q} &= \mathfrak{a + d c}
		& \text{and}&
		& \| \mathfrak{a} \|_{H^1 \mathfrak{L}^k} + \| \mathfrak{c} \|_{H^1 \mathfrak{L}^{k - 1}} &\lesssim \| \mathfrak{q} \|_{H \mathfrak{L}^k }
	\end{align*}
\end{theorem}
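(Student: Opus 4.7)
My plan is to prove \Cref{thm: regular decomposition} by top-down induction on the dimension $d$ of the sub-manifolds, applying \Cref{lem:local regular decomposition} at each step to a suitably corrected local form. The identity $\iota_i \mathfrak{dc} = \myd c_i + \iota_i \Bbbd \mathfrak{c}$, together with the observation that $\iota_i \Bbbd \mathfrak{c}$ only involves traces of components $c_l$ with $d_l = d_i + 1$, suggests a strict processing order: handle the top-dimensional manifolds first and descend one dimension at a time, feeding the jump contributions from the level above into the construction at the next level.

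At the base case $d = n$, for each $i \in I^n$ we have $\iota_i \Bbbd \mathfrak{c} = 0$, so \Cref{lem:local regular decomposition} applied to $q_i$ directly produces a pair $(a_i, c_i) \in \iota_i H^1 \mathfrak{L}^k \times \iota_i H^1 \mathfrak{L}^{k-1}$ with $q_i = a_i + \myd c_i$. For $d = n - 1, n - 2, \dots, 0$ and $i \in I^d$, the form $\mathfrak{c}$ is already defined on every manifold of strictly higher dimension, so I set
\begin{align*}
\tilde q_i := q_i - \iota_i \Bbbd \mathfrak{c}.
\end{align*}
Property \eqref{eq: jump in H1} guarantees $\iota_i \Bbbd \mathfrak{c} \in \iota_i H^1 \mathfrak{L}^k \subset \iota_i H \mathfrak{L}^k$, hence $\tilde q_i \in \iota_i H \mathfrak{L}^k$ and \Cref{lem:local regular decomposition} again yields $(a_i, c_i)$ with $\tilde q_i = a_i + \myd c_i$. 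Assembling the components produces $\mathfrak{a} = (a_i)_{i \in I}$ and $\mathfrak{c} = (c_i)_{i \in I}$, and the identity $q_i = a_i + \myd c_i + \iota_i \Bbbd \mathfrak{c} = \iota_i(\mathfrak{a} + \mathfrak{dc})$ is immediate for every $i \in I$. No compatibility between $a_i$ on different manifolds is required, since the product structure in \eqref{eq: regular k-forms} imposes no coupling beyond the local trace regularity already furnished by the local lemma.

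For stability, each application of \Cref{lem:local regular decomposition} provides a local bound proportional to $\|\tilde q_i\|_{H \mathfrak{L}^k}$, and the correction obeys
\begin{align*}
\| \iota_i \Bbbd \mathfrak{c} \|_{H^1 \Lambda^{k_i}(\Omega_i)} \lesssim \sum_{l \in I^{d_i + 1}} \sum_{\{ j \in I_l :\ i_j = i \}} \| \Tr_j c_l \|_{H^1 \Lambda^{k_i}(\Omega_{i_j})},
\end{align*}
which is absorbed into $\|\mathfrak{c}\|_{H^1 \mathfrak{L}^{k-1}}$ restricted to already-treated higher dimensions. Unrolling the recursion from $d = n$ down to $d = 0$ then telescopes these contributions into the single global bound $\|\mathfrak{a}\|_{H^1 \mathfrak{L}^k} + \|\mathfrak{c}\|_{H^1 \mathfrak{L}^{k-1}} \lesssim \|\mathfrak{q}\|_{H \mathfrak{L}^k}$; the trace norms appearing in \eqref{eq: regular norm} are automatically controlled because the local lemma delivers iterates already in $\iota_i H^1 \mathfrak{L}^{\bullet}$.

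The main obstacle I anticipate is the bookkeeping around the descent step: verifying that $\tilde q_i$ inherits exactly the trace regularity needed to invoke \Cref{lem:local regular decomposition}, and that the jump contributions previously generated on higher-dimensional manifolds remain compatible with the further traces taken onto sub-manifolds of $\Omega_i$. Both concerns collapse to property \eqref{eq: jump in H1}, asserting that $\Bbbd$ preserves $H^1$-regularity across levels, combined with the commutativity of $\Tr$ and $\myd$ recorded in the remark immediately preceding the theorem. Beyond this, the proof is a careful but largely mechanical assembly of the local building blocks produced by the preceding lemma.
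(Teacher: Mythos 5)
Your proposal is correct and follows essentially the same route as the paper: a top-down march through the dimensions, applying \Cref{lem:local regular decomposition} at each level and using \eqref{eq: jump in H1} to account for the jump contributions from the level above. The only cosmetic difference is that you subtract $\iota_i \Bbbd \mathfrak{c}$ from $q_i$ \emph{before} invoking the local lemma, whereas the paper decomposes $q_i$ first and then folds $-\iota_i \Bbbd \mathfrak{c}$ into the regular part $a_i$; the two orderings yield the same identity and the same stability bound.
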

\begin{proof} 
	Given $k$, we construct $\mathfrak{a} = (a_i)_{i \in I}$ and $\mathfrak{c} = (c_i)_{i \in I}$ by marching through the corresponding row in diagram \eqref{eq: De Rham complex}. We initialize both functions as $ \mathfrak{a} = 0 $  and $ \mathfrak{c} = 0 $, and redefine each component according to the following four, sequential steps.
	\begin{enumerate}
		\item
		If $k > 0$, consider $i \in I^n$. Lemma~\ref{lem:local regular decomposition} gives us $a_i \in \iota_i H^1 \mathfrak{L}^k$ and $c_i \in \iota_i H^1 \mathfrak{L}^{k - 1}$ such that
		\begin{subequations} \label{eq: proof q = a + dc}
		% \begin{subequations}
		\begin{align}
			 q_i &= a_i + \myd c_i.
		\end{align}
		We repeat the above for all $i \in I^n$ and continue with step 2.

		\item
		If $k > 1$, consider $i \in I^{n - 1}$. We use Lemma~\ref{lem:local regular decomposition} to define $\tilde{a}_i \in \iota_i H^1 \mathfrak{L}^k $ and $c_i \in \iota_i H^1 \mathfrak{L}^{k - 1}$ such that
		% \begin{subequations}
		\begin{align*}
			 q_i &= \tilde{a}_i + \myd c_i.
		\end{align*}
		Noting that $(q_i - \myd c_i) \in \iota_i H^1 \mathfrak{L}^k$ and using \eqref{eq: jump in H1}, we set
		\begin{align}
			a_i &:= q_i - \myd c_i - \iota_i \Bbbd \mathfrak{c}.  
		\end{align}
		where $ \mathfrak{c} $ has non-zero components $ c_j, \, j \in I^n $ defined in step 1. We repeat this construction for all $i \in I^{n - 1}$ and continue with step 3.

		\item 
		If $k > 2$, repeat step 2 with $i \in I^{n - 2}$ and continue with step 4.
		
		\item
		In any case, consider $i \in I^{n - k}$. We have $k_i = 0$ and note that $\iota_i H \mathfrak{L}^k = \iota_i H^1 \mathfrak{L}^k$ from \eqref{eq: H1 = H for k_i=0}. Hence, we use \eqref{eq: jump in H1} to set
		\begin{align}
			a_i := q_i - \iota_i \Bbbd \mathfrak{c}.
		\end{align}
		where $ \mathfrak{c} $ has non-zero components defined in steps 1--3. This construction is repeated for all $i \in I^{n - k}$.
		\end{subequations}
	\end{enumerate}

	The four steps give us $\mathfrak{a} := (a_i)_{i \in I} \in H^1 \mathfrak{L}^k$ and $\mathfrak{c} := (c_i)_{i \in I} \in H^1 \mathfrak{L}^{k - 1}$ and we collect \eqref{eq: proof q = a + dc} to conclude
	\begin{align*}
		\mathfrak{q} = \mathfrak{a + dc}.
	\end{align*}
	The bound follows by the construction and Lemma~\ref{lem:local regular decomposition}.
\end{proof}

A byproduct of the mixed-dimensional regular decomposition is the so-called regular inverse of the mixed-dimensional differential $\mathfrak{d}$ as shown in the following corollary. 

\begin{corollary}[Mixed-dimensional Regular Inverse] \label{cor: regular inverse}
	Given $\mathfrak{q} \in H \mathfrak{L}^k$, then there exists $\mathfrak{a} \in H^1 \mathfrak{L}^k$ such that
	\begin{align*}
		\mathfrak{d (q - a)} &= 0 
		& \text{and}&
		& \| \mathfrak{a} \|_{H^1 \mathfrak{L}^k} &\lesssim \| \mathfrak{q} \|_{H \mathfrak{L}^k }
	\end{align*}
\end{corollary}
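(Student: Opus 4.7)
The plan is to obtain the corollary as an almost immediate consequence of Theorem~\ref{thm: regular decomposition} combined with the closedness property \eqref{eq: closedness} of the mixed-dimensional de Rham complex. Since the theorem already produces a decomposition of the form $\mathfrak{q} = \mathfrak{a} + \mathfrak{dc}$ with a quantitative bound, essentially no further construction is required: the regular piece $\mathfrak{a}$ supplied by the theorem is the one we want, and the "remainder" $\mathfrak{q} - \mathfrak{a}$ is automatically closed because it equals $\mathfrak{dc}$.

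More concretely, I would proceed in three brief steps. First, apply Theorem~\ref{thm: regular decomposition} to the given $\mathfrak{q} \in H \mathfrak{L}^k$, producing $\mathfrak{a} \in H^1 \mathfrak{L}^k$ and $\mathfrak{c} \in H^1 \mathfrak{L}^{k-1}$ with $\mathfrak{q} = \mathfrak{a} + \mathfrak{dc}$ and the joint norm estimate $\|\mathfrak{a}\|_{H^1 \mathfrak{L}^k} + \|\mathfrak{c}\|_{H^1 \mathfrak{L}^{k - 1}} \lesssim \|\mathfrak{q}\|_{H \mathfrak{L}^k}$. Second, rewrite $\mathfrak{q} - \mathfrak{a} = \mathfrak{dc}$ and apply $\mathfrak{d}$ to both sides; invoking \eqref{eq: closedness}, which asserts $\mathfrak{d(dc)} = 0$, yields $\mathfrak{d(q - a)} = 0$. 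Third, drop the nonnegative term $\|\mathfrak{c}\|_{H^1 \mathfrak{L}^{k - 1}}$ from the theorem's estimate to obtain the desired bound $\|\mathfrak{a}\|_{H^1 \mathfrak{L}^k} \lesssim \|\mathfrak{q}\|_{H \mathfrak{L}^k}$.

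There is no real obstacle here: all the work has already been done in Theorem~\ref{thm: regular decomposition} and in the closedness part of the lemma characterizing the complex \eqref{eq: De Rham complex L}. The only thing to be careful about is the edge case $k = 0$, where $H^1 \mathfrak{L}^{k - 1}$ is not defined; in that case the statement is trivial because $\iota_i H \mathfrak{L}^0 = \iota_i H^1 \mathfrak{L}^0$ for every $i \in I^n$ by \eqref{eq: H1 = H for k_i=0}, so one can simply take $\mathfrak{a} = \mathfrak{q}$.
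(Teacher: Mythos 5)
Your proposal is correct and matches the paper's own argument exactly: the corollary is obtained by applying Theorem~\ref{thm: regular decomposition} and using the closedness property \eqref{eq: closedness} to conclude $\mathfrak{d(q-a)} = \mathfrak{d(dc)} = 0$, with the bound following by dropping the $\mathfrak{c}$ term. Your additional remark about the $k=0$ edge case is a harmless extra precaution not present in the paper.
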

\begin{proof}
	Follows from Theorem~\ref{thm: regular decomposition} and the fact that $\mathfrak{ddc} = 0$ from \eqref{eq: closedness}.
\end{proof}

\section{Discrete Regular Decomposition}
\label{sec:discretization}
In this section, we introduce the discrete version of the regular decomposition (Theorem \ref{thm: regular decomposition}).  To this end, let $h$ be the typical mesh size and the subscript $h$ describe discrete entities. We first present the conventional discrete regular decomposition in a fixed-dimensional setting. Then, we introduce the structure-preserving discretization of the mixed-dimensional geometry $ \Omega $ and function spaces $ H \mathfrak{L}^k $. We finalize the section with deriving the discrete mixed-dimensional regular decomposition.

\subsection{Fixed-dimensional Discrete Regular Decomposition} \label{sub:fixeddim_discrete_regular_decomposition}
Let $ H_h(\nabla, \Omega_i) $, $H_h(\nabla \times, \Omega_i)$, and $H_h(\nabla \cdot, \Omega_i)$ denote the conforming finite element approximations of the functions spaces $ H(\nabla, \Omega_i) $, $ H(\nabla \times, \Omega_i) $ and $ H(\nabla \cdot, \Omega_i) $. In addition, let $ \Pi^{\nabla \times}_h :  H(\nabla \times, \Omega_i) \to  H_h(\nabla \times, \Omega_i)$ and $ \Pi^{\nabla \cdot}_h : H(\nabla \cdot, \Omega_i) \to H_h(\nabla \cdot, \Omega_i) $ be the stable projection operators. In connection to \Cref{sub:fixeddim_regular_decomposition}, the discrete analogues of \Cref{thm:regular_Hcurl} and \Cref{thm:regular_Hdiv} are given below.

	\begin{theorem}[Regular decomposition of $ H_h(\nabla \times, \Omega_i) $] \label{thm:discrete_regular_Hcurl}
		For any $ \bm{q}_h \in H_h(\nabla \times, \Omega_i) $, there exist vector functions $ \bm{a}_h \in (H_h(\nabla, \Omega_i))^3 $, $ \bm{b}_h \in H_h(\nabla \times, \Omega_i) $ and a scalar function $ c_h \in H_h(\nabla, \Omega_i) $ such that
		\begin{subequations}
			\begin{align}
			\bm{q}_h & = \Pi^{\nabla \times}_h \bm{a}_h + \bm{b}_h + \nabla c_h, \\
			\| \Pi^{\nabla \times}_h \bm{a}_h \|_{H(\nabla \times, \Omega_i)} + \| h^{-1} \bm{b}_h \|_{L^2(\Omega_i)} + \| c_h \|_{H(\nabla, \Omega_i)} & \lesssim \| \bm{q}_h \|_{H(\nabla \times, \Omega_i)}. \label{eq:discrete_regular_Hcurl_bound}
			\end{align}
		\end{subequations}
	\end{theorem}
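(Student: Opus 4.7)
The plan is to combine the continuous decomposition from \Cref{thm:regular_Hcurl} with a commuting quasi-interpolation into the Lagrange space and absorb the resulting approximation error into a discrete high-frequency remainder. First I would apply \Cref{thm:regular_Hcurl} to $\bm{q}_h$, viewed as an element of $H(\nabla \times, \Omega_i)$, to obtain $\bm{a} \in (H(\nabla, \Omega_i))^3$ and $c \in H(\nabla, \Omega_i)$ with $\bm{q}_h = \bm{a} + \nabla c$ and $\|\bm{a}\|_{H(\nabla,\Omega_i)} + \|c\|_{H(\nabla,\Omega_i)} \lesssim \|\bm{q}_h\|_{H(\nabla \times, \Omega_i)}$.

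Next I would introduce a Scott--Zhang-type quasi-interpolant $\Pi^\nabla_h : H(\nabla,\Omega_i) \to H_h(\nabla,\Omega_i)$, set $c_h := \Pi^\nabla_h c$, and define $\bm{a}_h$ as the componentwise action of $\Pi^\nabla_h$ on $\bm{a}$, so $\bm{a}_h \in (H_h(\nabla,\Omega_i))^3$. Using the commuting-diagram identity $\Pi^{\nabla \times}_h \nabla = \nabla \Pi^\nabla_h$ for the canonical Lagrange/N\'ed\'elec pair and the fact that $\bm{q}_h$ is already in $H_h(\nabla\times,\Omega_i)$, I would write
\[
\bm{q}_h = \Pi^{\nabla \times}_h \bm{q}_h = \Pi^{\nabla \times}_h \bm{a} + \nabla \Pi^\nabla_h c = \Pi^{\nabla \times}_h \bm{a}_h + \bm{b}_h + \nabla c_h,
\]
where $\bm{b}_h := \Pi^{\nabla \times}_h (\bm{a} - \bm{a}_h) \in H_h(\nabla \times, \Omega_i)$. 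This gives the required decomposition by construction.

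For the stability bound, the estimate on $c_h$ follows from $H^1$-stability of $\Pi^\nabla_h$. For $\bm{b}_h$, the $L^2$-boundedness of $\Pi^{\nabla \times}_h$ combined with the first-order Scott--Zhang approximation estimate $\|\bm{a} - \bm{a}_h\|_{L^2(\Omega_i)} \lesssim h \|\bm{a}\|_{H(\nabla,\Omega_i)}$ yields $\|h^{-1} \bm{b}_h\|_{L^2(\Omega_i)} \lesssim \|\bm{a}\|_{H(\nabla,\Omega_i)}$. For the control of $\|\Pi^{\nabla \times}_h \bm{a}_h\|_{H(\nabla \times, \Omega_i)}$, I would use $\Pi^{\nabla \times}_h \bm{a}_h = \Pi^{\nabla \times}_h \bm{a} - \bm{b}_h$, bound the first term by $H(\nabla \times)$-stability of $\Pi^{\nabla \times}_h$ applied to the $H^1$-regular field $\bm{a}$, and control the curl of the second term with the standard inverse inequality $\|\nabla \times \bm{b}_h\|_{L^2(\Omega_i)} \lesssim h^{-1} \|\bm{b}_h\|_{L^2(\Omega_i)}$. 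Chaining these bounds with the continuous estimate from \Cref{thm:regular_Hcurl} delivers the claimed inequality \eqref{eq:discrete_regular_Hcurl_bound}.

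The main technical obstacle is arranging the quasi-interpolants so that the commuting-diagram identity, the $H^1$-stability and first-order $L^2$ approximation of $\Pi^\nabla_h$, and the $H(\nabla \times)$-stability and $L^2$-boundedness of $\Pi^{\nabla \times}_h$ all hold simultaneously with a single mesh-independent constant. Each ingredient is standard in the fixed-dimensional discrete de Rham theory, but the compatibility of the specific nodal Lagrange and N\'ed\'elec interpolants needs to be invoked carefully to keep the estimate uniform in $h$.
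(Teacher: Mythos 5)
The paper does not actually prove this theorem: it is quoted as a known result from Hiptmair and Xu \cite{hiptmair:xu}, so there is no in-paper argument to compare against line by line. Your proposal is a faithful reconstruction of the standard proof from that reference, and the overall architecture --- continuous regular decomposition of $\bm{q}_h$ via \Cref{thm:regular_Hcurl}, re-projection into the discrete space, and absorption of the interpolation error of the regular part into a high-frequency remainder controlled by $h^{-1}$ in $L^2$ plus an inverse inequality for its curl --- is exactly right. The one soft spot is the pivot of your argument, the identity $\Pi^{\nabla\times}_h \nabla = \nabla \Pi^{\nabla}_h$ with $\Pi^{\nabla}_h$ a Scott--Zhang operator: Scott--Zhang does not commute with the canonical N\'ed\'elec interpolant, and the canonical N\'ed\'elec interpolant is neither defined on all of $H(\nabla\times,\Omega_i)$ nor $L^2$-bounded, so the combination you name does not literally satisfy all the properties you use simultaneously. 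There are two standard repairs, either of which closes the gap: (i) replace both operators by a commuting, locally $L^2$-bounded system of quasi-interpolants (Sch\"oberl, or Falk--Winther as cited in \cite{falk2014local}), for which every property you invoke holds with mesh-independent constants; or (ii) avoid interpolating $c$ altogether by noting that $\bm{q}_h - \Pi^{\nabla\times}_h\bm{a}$ has vanishing discrete curl (since $\nabla\times\bm{a} = \nabla\times\bm{q}_h$ is already discrete and the projection commutes with the curl), hence by discrete exactness it equals $\nabla c_h$ for some $c_h \in H_h(\nabla,\Omega_i)$ with the required bound. The second route is the one the mixed-dimensional analogue in \Cref{lem: semidiscrete decomposition} and \Cref{thm: discrete triplet} actually follows, and it sidesteps the regularity issues of nodal interpolation entirely. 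Since you explicitly flagged the interpolant compatibility as the main technical obstacle, I would count your proposal as correct in substance, with the caveat that the final write-up must commit to one of these two repairs rather than to the literal Scott--Zhang/canonical-N\'ed\'elec pairing.
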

	\begin{theorem}[Regular decomposition of $ H_h(\nabla \cdot, \Omega_i) $] \label{thm:discrete_regular_Hdiv}
		For any $ \bm{q}_h \in H_h(\nabla \cdot, \Omega_i) $, there exist vector functions $ \bm{a}_h, \bm{e}_h \in (H_h(\nabla, \Omega_i))^3 $, $ \bm{b}_h \in H_h(\nabla \cdot, \Omega_i)  $ and $ \bm{f}_h \in H_h(\nabla \times, \Omega_i) $ such that
		\begin{subequations}
			\begin{align}
			\bm{q}_h = \Pi^{\nabla \cdot}_h \bm{a}_h + \bm{b}_h + \nabla \times (\Pi^{\nabla \times}_h \bm{e}_h & + \bm{f}_h), \\
			\| \Pi^{\nabla \cdot}_h \bm{a}_h \|_{H(\nabla \cdot, \Omega_i)} + \| h^{-1} \bm{b}_h \|_{L^2(\Omega_i)} + \| \bm{e}_h \|_{H(\nabla \times, \Omega_i)} + \| h^{-1} \bm{f}_h \|_{L^2(\Omega_i)} & \lesssim \| \bm{q}_h \|_{H(\nabla \cdot, \Omega_i)}. \label{eq:discrete_regular_Hdiv_bound}
			\end{align}
		\end{subequations}
	\end{theorem}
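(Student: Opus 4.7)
The strategy is to bootstrap from the continuous decomposition in Theorem~\ref{thm:regular_Hdiv} and then replace the $H^1$ potentials by their Scott--Zhang quasi-interpolants into the nodal Lagrange space $H_h(\nabla, \Omega_i)$. The two extra terms $\bm{b}_h$ and $\bm{f}_h$ absorb the approximation errors introduced at that step, and the $h^{-1}$ weights in \eqref{eq:discrete_regular_Hdiv_bound} are exactly what one-order local approximation of $H^1$ functions delivers.

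Concretely, first I would apply Theorem~\ref{thm:regular_Hdiv} to $\bm{q}_h$ viewed as an element of $H(\nabla \cdot, \Omega_i)$, producing $\bm{a}, \bm{c} \in (H(\nabla, \Omega_i))^3$ with $\bm{q}_h = \bm{a} + \nabla \times \bm{c}$ and $\|\bm{a}\|_{H(\nabla, \Omega_i)} + \|\bm{c}\|_{H(\nabla, \Omega_i)} \lesssim \|\bm{q}_h\|_{H(\nabla \cdot, \Omega_i)}$. Since $\bm{q}_h \in H_h(\nabla \cdot, \Omega_i)$, applying $\Pi^{\nabla \cdot}_h$ and using the commuting property $\Pi^{\nabla \cdot}_h \nabla \times = \nabla \times \Pi^{\nabla \times}_h$ yields
\begin{align*}
\bm{q}_h = \Pi^{\nabla \cdot}_h \bm{q}_h = \Pi^{\nabla \cdot}_h \bm{a} + \nabla \times \Pi^{\nabla \times}_h \bm{c}.
\end{align*}
Next I would introduce a componentwise Scott--Zhang operator $I_h : (H(\nabla, \Omega_i))^3 \to (H_h(\nabla, \Omega_i))^3$, set $\bm{a}_h := I_h \bm{a}$ and $\bm{e}_h := I_h \bm{c}$, and rearrange:
\begin{align*}
\bm{q}_h = \Pi^{\nabla \cdot}_h \bm{a}_h + \Pi^{\nabla \cdot}_h(\bm{a} - \bm{a}_h) + \nabla \times \Pi^{\nabla \times}_h \bm{e}_h + \nabla \times \Pi^{\nabla \times}_h(\bm{c} - \bm{e}_h),
\end{align*}
which is the target form once $\bm{b}_h := \Pi^{\nabla \cdot}_h(\bm{a} - \bm{a}_h)$ and $\bm{f}_h := \Pi^{\nabla \times}_h(\bm{c} - \bm{e}_h)$ are read off.

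For the estimates, the principal terms $\Pi^{\nabla \cdot}_h \bm{a}_h$ and $\Pi^{\nabla \times}_h \bm{e}_h$ are controlled by $H^1$-stability of $I_h$ followed by the continuous bound in Theorem~\ref{thm:regular_Hdiv}. For the correction terms I would combine the Scott--Zhang approximation estimate $\|\bm{a} - I_h \bm{a}\|_{L^2(\Omega_i)} \lesssim h \|\bm{a}\|_{H(\nabla, \Omega_i)}$ with $L^2$-boundedness of the projections on this difference to conclude $\|h^{-1} \bm{b}_h\|_{L^2(\Omega_i)} \lesssim \|\bm{a}\|_{H(\nabla, \Omega_i)}$, and analogously for $\bm{f}_h$. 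The main obstacle is precisely this last step: the canonical, moment-based interpolants are not $L^2$-bounded on rough inputs, so I would replace them by Sch\"oberl-type smoothed projections that are simultaneously $L^2$-stable and commute with the differential operators. Once that machinery is in place, the bound \eqref{eq:discrete_regular_Hdiv_bound} follows by the triangle inequality, and the same recipe one step higher in the de Rham complex reproduces Theorem~\ref{thm:discrete_regular_Hcurl}.
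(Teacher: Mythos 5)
Your proposal is correct and is essentially the standard Hiptmair--Xu argument; the paper itself states \Cref{thm:discrete_regular_Hdiv} without proof as a known result from \cite{hiptmair:xu}, and the scheme you describe is precisely the one the paper then replays in the mixed-dimensional setting (\Cref{lem: semidiscrete decomposition}, \Cref{thm: discrete triplet}, \Cref{coro: discrete decomposition}): apply the continuous regular decomposition, commute the stable projections past the differential, interpolate the $H^1$ potentials into the nodal space, and absorb the $\mathcal{O}(h)$ interpolation errors into the $h^{-1}$-weighted high-frequency terms. You also correctly flag the one real technical point, namely that the canonical moment-based interpolants are not $L^2$-bounded on $H^1$ inputs, so one must use smoothed ($L^2$-stable, commuting) projections. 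The only structural difference from the paper's mixed-dimensional route is that you handle both potentials $\bm{a}$ and $\bm{c}$ in a single pass by projecting the continuous potential $\bm{c}$ directly, whereas the paper first produces a discrete potential $\mathfrak{f}_h$ via discrete exactness and a discrete Poincar\'e inequality and then decomposes it a second time; your shortcut is legitimate here because \Cref{thm:regular_Hdiv} already delivers $\bm{c}$ in $(H(\nabla,\Omega_i))^3$ with the required bound.
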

These discrete regular decompositions reveal the structure that we aim to preserve in the mixed-dimensional setting. Specifically, the stability of the decompositions in the sense of bounds \eqref{eq:discrete_regular_Hcurl_bound} and \eqref{eq:discrete_regular_Hdiv_bound} will in turn provide us with robust preconditioners by the theory of the auxiliary space methods. We give a short overview of the auxiliary space preconditioning theory later in \Cref{sub:auxiliary_space} and focus first on the derivation of the mixed-dimensional analogue to \Cref{thm:discrete_regular_Hcurl} and \Cref{thm:discrete_regular_Hdiv}.

\subsection{Mixed-dimensional Discretization} \label{sub:mixeddim_discretization}

First, we introduce a shape-regular simplicial partition of $\Omega$, denoted by $\Omega_h = \bigcup_{i \in I} \Omega_{i, h}$. The grid is constructed such that it conforms to all lower-dimensional manifolds and all grids are matching. In order to preserve the regular decomposition on the discrete level, structure preserving discretization in the mixed-dimensional setting should be considered.  Let us introduce $H_h \mathfrak{L}^k$ as the discretization of $H \mathfrak{L}^k$ defined on $\Omega_h$. Using the notation of finite element exterior calculus \cite{arnold:falk:winther_FEEC}, we consider the family of reduced finite elements (i.e. elements of the first kind) and set
\begin{align} \label{eq: choice of spaces}
 	H_h \mathfrak{L}^k = \prod_{i \in I} P_r^- \Lambda^{k_i}(\Omega_{i, h}).
\end{align}

The lowest-order case ($r = 1$) in the three-dimensional setting ($n = 3$) gives us the following, discrete de Rham complex:
\begin{equation} \label{eq: discrete de rham}
	\begin{tikzcd}
	H_h \mathfrak{L}^0 \arrow[d,"\mathfrak{d}"] 
	& \mathbb{P}_1(\Omega_h^3) \arrow[d,"\nabla"]\arrow[rd,"\Bbbd"] \\
	H_h \mathfrak{L}^1 \arrow[d,"\mathfrak{d}"] 
	& \mathbb{N}_0^e(\Omega_h^3) \arrow[d,"\nabla \times"]\arrow[rd,"\Bbbd"]
	& \mathbb{P}_1(\Omega_h^2) \arrow[d,"\nabla^\perp"]\arrow[rd,"\Bbbd"] \\
	H_h \mathfrak{L}^2 \arrow[d,"\mathfrak{d}"] 
	& \mathbb{N}_0^f(\Omega_h^3) \arrow[d,"\nabla\cdot"]\arrow[rd,"\Bbbd"] 
	& \mathbb{RT}_0(\Omega_h^2) \arrow[d,"\nabla\cdot"]\arrow[rd,"\Bbbd"] 
	& \mathbb{P}_1(\Omega_h^1) \arrow[d,"\nabla \cdot"]\arrow[rd,"\Bbbd"] \\
	H_h \mathfrak{L}^3 
	& \mathbb{P}_0(\Omega_h^3)
	& \mathbb{P}_0(\Omega_h^2)
	& \mathbb{P}_0(\Omega_h^1)
	& \mathbb{P}_0(\Omega_h^0)
	\end{tikzcd}
\end{equation}
Here, $\mathbb{P}_1$, $\mathbb{RT}_0$, and $\mathbb{P}_0$ denote linear Lagrange, lowest-order Raviart-Thomas, and piecewise constant finite element spaces, respectively. $\mathbb{N}_0^e$ and $\mathbb{N}_0^f$ represent the edge-based and face-based N\'ed\'elec elements of lowest order, respectively.

We introduce the stable projection operators $\Pi_h^k: H \mathfrak{L}^k \mapsto H_h \mathfrak{L}^k$ such that 
\begin{align}
%\| \Pi_h^k \mathfrak{u} \|_{L^2 \mathfrak{L}^k} & \lesssim \| \mathfrak{u} \|_{L^2 \mathfrak{L}^k} + h \| \mathfrak{d} \mathfrak{u} \|_{L^2 \mathfrak{L}^k}, \quad \forall \, \mathfrak{u} \in H\mathfrak{L}^k,  \label{ine:Pi-bound} \\
\| (I - \Pi_h^k) \mathfrak{a} \|_{L^2 \mathfrak{L}^k} & \lesssim h \| \mathfrak{a} \|_{H^1 \mathfrak{L}^k}, \quad \forall \, \mathfrak{a} \in H^1\mathfrak{L}^k, \label{ine:Pi-approx}
\end{align}
and the following diagram commutes,
\begin{equation} \label{eq: commuting diagram}
	\begin{tikzcd}
	H \mathfrak{L}^0 \arrow[r,"\mathfrak{d}"] \arrow[d,"\Pi_h^0"]
	& H \mathfrak{L}^1 \arrow[r,"\mathfrak{d}"] \arrow[d,"\Pi_h^1"]
	& H \mathfrak{L}^2 \arrow[r,"\mathfrak{d}"] \arrow[d,"\Pi_h^2"]
	& H \mathfrak{L}^3 \arrow[d,"\Pi_h^3"]
	\\
	H_h \mathfrak{L}^0 \arrow[r,"\mathfrak{d}"] 
	& H_h \mathfrak{L}^1 \arrow[r,"\mathfrak{d}"] 
	& H_h \mathfrak{L}^2 \arrow[r,"\mathfrak{d}"] 
	& H_h \mathfrak{L}^3 	
	\end{tikzcd}
\end{equation}
In the mixed-dimensional setting, such a bounded projection can be constructed by combining fixed-dimensional cochain projections on each $\Omega_i$ together.  For the construction of fixed-dimensional cochain projections, we refer to~\cite{falk2014local}.

\begin{lemma}[Exactness] \label{lem: exactness discrete}
	Given the commuting projection operators $\Pi_h^k$ exist, then all discrete closed forms are exact. Thus, for each $\mathfrak{q}_h \in H_h \mathfrak{L}^k$ with $\mathfrak{dq}_h = 0$, there exists a $\mathfrak{c}_h \in H_h \mathfrak{L}^{k - 1}$ such that
	\begin{align*}
		\mathfrak{dc}_h = \mathfrak{q}_h.
	\end{align*}
\end{lemma}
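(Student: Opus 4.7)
The plan is to invoke continuous exactness on the closed form $\mathfrak{q}_h$, viewed as an element of $H \mathfrak{L}^k$, and then pull the resulting continuous potential down to the discrete space via the commuting projection $\Pi_h^{k-1}$. The commutativity of diagram \eqref{eq: commuting diagram} together with the fact that $\Pi_h^k$ acts as the identity on $H_h \mathfrak{L}^k$ will then close the argument.

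More concretely, first I would observe that $H_h \mathfrak{L}^k \subseteq H \mathfrak{L}^k$, so the hypothesis $\mathfrak{d q}_h = 0$ allows us to apply the exactness statement \eqref{eq: exactness} of the continuous mixed-dimensional de Rham complex: there exists $\mathfrak{c} \in H \mathfrak{L}^{k-1}$ with $\mathfrak{d c} = \mathfrak{q}_h$. Second, I would define the discrete potential
\begin{align*}
    \mathfrak{c}_h := \Pi_h^{k-1} \mathfrak{c} \in H_h \mathfrak{L}^{k-1}.
\end{align*}
Finally, using that $\Pi_h^{k-1}$ and $\Pi_h^k$ commute with $\mathfrak{d}$ by \eqref{eq: commuting diagram}, and that $\Pi_h^k$ is a projection (hence the identity on $H_h \mathfrak{L}^k$), I would compute
\begin{align*}
    \mathfrak{d c}_h = \mathfrak{d} \Pi_h^{k-1} \mathfrak{c} = \Pi_h^k \mathfrak{d c} = \Pi_h^k \mathfrak{q}_h = \mathfrak{q}_h,
\end{align*}
which is exactly the claim.

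There is essentially no obstacle to overcome here: the result is a textbook consequence of continuous exactness combined with a commuting cochain projection, and the mixed-dimensional setting enters only through the fact that both ingredients have already been established (exactness in the lemma accompanying \eqref{eq: exactness}, and the commuting projection in the discussion surrounding diagram \eqref{eq: commuting diagram}). The only point where one should be slightly careful is that $\Pi_h^k$ must indeed restrict to the identity on $H_h \mathfrak{L}^k$, which is implicit in its being called a \emph{projection}; this is the standard property shared by the fixed-dimensional cochain projections of \cite{falk2014local} from which the mixed-dimensional projection is assembled.
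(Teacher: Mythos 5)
Your proposal is correct and follows exactly the same route as the paper's own proof: embed $H_h \mathfrak{L}^k$ into $H \mathfrak{L}^k$, apply the continuous exactness \eqref{eq: exactness} to obtain $\mathfrak{c}$, set $\mathfrak{c}_h := \Pi_h^{k-1}\mathfrak{c}$, and use the commutativity of \eqref{eq: commuting diagram} together with $\Pi_h^k \mathfrak{q}_h = \mathfrak{q}_h$ to conclude. Your remark that the projection property of $\Pi_h^k$ on $H_h\mathfrak{L}^k$ is the one point needing care is a fair observation, and the paper indeed uses it silently in the last equality.
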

\begin{proof}
	Assume $\mathfrak{q}_h \in H_h \mathfrak{L}^k$ with $\mathfrak{dq}_h = 0$ given. Since $H_h \mathfrak{L}^k \subset H \mathfrak{L}^k$, we use the exactness of the mixed-dimensional De Rham complex \eqref{eq: exactness} to find $\mathfrak{c} \in H \mathfrak{L}^{k - 1}$ such that
	\begin{align*}
		\mathfrak{dc} = \mathfrak{q}_h.
	\end{align*}
	Setting $\mathfrak{c}_h := \Pi_h^{k - 1} \mathfrak{c}$, we obtain
	\begin{align*}
		\mathfrak{dc}_h
		= \mathfrak{d} \Pi_h^{k - 1} \mathfrak{c}
		= \Pi_h^k \mathfrak{d} \mathfrak{c}
		= \Pi_h^k \mathfrak{q}_h
		= \mathfrak{q}_h,
	\end{align*}
	which completes the proof. 
	% TODO: Do we need to show stability as well? Follows directly from \eqref{eq: exactness} though. 
	% Xiaozhe: I think we do not need to show stability since we do not need to use it.
\end{proof}

\subsection{Mixed-dimensional Discrete Regular Decomposition}
\label{sub:mixeddim_discrete_regular_decomposition}

This section is devoted to deriving the discrete regular decomposition, i.e. the discrete analogue to Theorem~\ref{thm: regular decomposition}.  We first require the following preparatory lemma.
\begin{lemma} \label{lem: semidiscrete decomposition}
	Given $\mathfrak{q}_h \in H_h \mathfrak{L}^k$, then there exists a semi-discrete pair $(\mathfrak{a}, \mathfrak{f}_h) \in H^1 \mathfrak{L}^k \times H_h \mathfrak{L}^{k - 1}$ such that
	\begin{align*}
		\mathfrak{q}_h &= \Pi_h^k \mathfrak{a} + \mathfrak{d} \mathfrak{f}_h
		& \text{and}&
		& \| \mathfrak{a} \|_{H^1 \mathfrak{L}^k} + \| \mathfrak{f}_h \|_{H \mathfrak{L}^{k-1}} &\lesssim \| \mathfrak{q}_h \|_{H \mathfrak{L}^k }.
	\end{align*}
\end{lemma}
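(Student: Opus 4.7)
The plan is to bootstrap the statement from the continuous regular decomposition (Theorem~\ref{thm: regular decomposition}) using the commuting cochain projections of diagram~\eqref{eq: commuting diagram}. First, I would regard $\mathfrak{q}_h \in H_h \mathfrak{L}^k$ as an element of $H \mathfrak{L}^k$ and apply Theorem~\ref{thm: regular decomposition} to obtain $\mathfrak{a} \in H^1 \mathfrak{L}^k$ and $\mathfrak{c} \in H^1 \mathfrak{L}^{k - 1}$ with $\mathfrak{q}_h = \mathfrak{a} + \mathfrak{d c}$ and the stability bound $\| \mathfrak{a} \|_{H^1 \mathfrak{L}^k} + \| \mathfrak{c} \|_{H^1 \mathfrak{L}^{k - 1}} \lesssim \| \mathfrak{q}_h \|_{H \mathfrak{L}^k}$.

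Next, I would apply the projection $\Pi_h^k$ to both sides of this identity. Since $\mathfrak{q}_h$ already lies in the discrete space we have $\Pi_h^k \mathfrak{q}_h = \mathfrak{q}_h$, and the commuting diagram~\eqref{eq: commuting diagram} gives $\Pi_h^k \mathfrak{d c} = \mathfrak{d} \Pi_h^{k - 1} \mathfrak{c}$. Therefore, setting $\mathfrak{f}_h := \Pi_h^{k - 1} \mathfrak{c} \in H_h \mathfrak{L}^{k - 1}$ yields the desired semi-discrete identity $\mathfrak{q}_h = \Pi_h^k \mathfrak{a} + \mathfrak{d} \mathfrak{f}_h$.

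The main work is in the norm estimate on $\mathfrak{f}_h$, since the $H^1$-bound on $\mathfrak{a}$ is inherited directly from Theorem~\ref{thm: regular decomposition}. For the $L^2$ component of $\|\mathfrak{f}_h\|_{H \mathfrak{L}^{k-1}}$, I would write $\Pi_h^{k-1} \mathfrak{c} = \mathfrak{c} - (I - \Pi_h^{k-1}) \mathfrak{c}$, apply the triangle inequality, and use the approximation estimate \eqref{ine:Pi-approx} (valid since $\mathfrak{c} \in H^1 \mathfrak{L}^{k-1}$) together with the assumption that $h$ is bounded above, giving $\|\Pi_h^{k-1} \mathfrak{c}\|_{L^2 \mathfrak{L}^{k-1}} \lesssim \|\mathfrak{c}\|_{H^1 \mathfrak{L}^{k-1}}$. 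For the differential component, I would \emph{avoid} applying the projection directly to $\mathfrak{dc} \in L^2$ (which is not $H^1$-regular) by instead using the identity $\mathfrak{d} \mathfrak{f}_h = \mathfrak{q}_h - \Pi_h^k \mathfrak{a}$. Applying the same trick to $\Pi_h^k \mathfrak{a}$ produces $\|\Pi_h^k \mathfrak{a}\|_{L^2 \mathfrak{L}^k} \lesssim \|\mathfrak{a}\|_{H^1 \mathfrak{L}^k}$, so that $\|\mathfrak{d} \mathfrak{f}_h\|_{L^2 \mathfrak{L}^k} \lesssim \|\mathfrak{q}_h\|_{L^2 \mathfrak{L}^k} + \|\mathfrak{a}\|_{H^1 \mathfrak{L}^k}$. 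Combining these estimates with the continuous stability bound closes the argument.

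The primary obstacle is the stability of the projection in the $H$-norm, because \eqref{ine:Pi-approx} only supplies an approximation estimate. The key to sidestepping this is that both factors being projected, namely $\mathfrak{c}$ and $\mathfrak{a}$, already enjoy the extra $H^1$-regularity provided by the continuous regular decomposition; this extra regularity turns the approximation estimate into an $L^2$-stability estimate with constant independent of $h$ (for $h \leq h_0$). The choice to rewrite $\mathfrak{d} \mathfrak{f}_h$ as $\mathfrak{q}_h - \Pi_h^k \mathfrak{a}$ is essential, since it replaces a potentially unstable projection of the merely $L^2$-regular quantity $\mathfrak{d c}$ by a projection of the regular part $\mathfrak{a}$.
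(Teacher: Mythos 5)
Your proposal is correct, but it constructs the potential $\mathfrak{f}_h$ by a genuinely different route than the paper. The paper starts from the regular \emph{inverse} (Corollary~\ref{cor: regular inverse}): it builds $\mathfrak{a}$ with $\mathfrak{d}(\mathfrak{q}_h-\mathfrak{a})=0$, uses commutativity to conclude that $\mathfrak{q}_h-\Pi_h^k\mathfrak{a}$ is a closed discrete form, invokes the discrete exactness result (Lemma~\ref{lem: exactness discrete}) to obtain \emph{some} $\mathfrak{f}_h$ with $\mathfrak{d}\mathfrak{f}_h=\mathfrak{q}_h-\Pi_h^k\mathfrak{a}$, and then must select a particular representative satisfying a discrete Poincar\'e inequality $\|\mathfrak{f}_h\|_{H\mathfrak{L}^{k-1}}\lesssim\|\mathfrak{d}\mathfrak{f}_h\|_{L^2\mathfrak{L}^k}$ in order to bound $\|\mathfrak{f}_h\|_{L^2}$. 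You instead take the full continuous regular decomposition $\mathfrak{q}_h=\mathfrak{a}+\mathfrak{dc}$, push it through $\Pi_h^k$ using $\Pi_h^k\mathfrak{q}_h=\mathfrak{q}_h$ and the commuting diagram~\eqref{eq: commuting diagram}, and set $\mathfrak{f}_h:=\Pi_h^{k-1}\mathfrak{c}$ explicitly; the $H^1$-regularity of $\mathfrak{c}$ then converts the approximation estimate~\eqref{ine:Pi-approx} into the needed $L^2$-bound on $\mathfrak{f}_h$, while your rewriting $\mathfrak{d}\mathfrak{f}_h=\mathfrak{q}_h-\Pi_h^k\mathfrak{a}$ reproduces the paper's estimate for the differential part verbatim. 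What your approach buys is the elimination of both the discrete exactness lemma and the (unreferenced) discrete Poincar\'e inequality, since the potential is constructed and bounded explicitly; what the paper's approach buys is independence from the specific continuous potential $\mathfrak{c}$ and closer alignment with the standard regular-inverse formulation of Hiptmair--Xu. Both arguments rest on the same ingredients (idempotence of $\Pi_h^k$ on the discrete space, the commuting diagram, and the stability of the continuous decomposition), so your proof is a valid and arguably more self-contained substitute.
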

\begin{proof}
	Since $\mathfrak{q}_h \in H_h \mathfrak{L}^k \subset H \mathfrak{L}^k$, we use Corollary~\ref{cor: regular inverse} to construct $\mathfrak{a} \in H^1 \mathfrak{L}^k$ such that
	\begin{align*}
		\mathfrak{d} (\mathfrak{q}_h - \mathfrak{a}).
		= 0
	\end{align*}
	Next, we use $\mathfrak{d} H_h \mathfrak{L}^k \subseteq H_h \mathfrak{L}^{k + 1}$ and the commutativity of the projection operators to derive
	\begin{align*}
		\mathfrak{d} \mathfrak{q}_h
		 = \Pi_h^{k + 1} \mathfrak{d} \mathfrak{q}_h
		 = \Pi_h^{k + 1} \mathfrak{d} \mathfrak{a}
		 = \mathfrak{d} \Pi_h^k \mathfrak{a}.
	\end{align*}
	We thus have $\mathfrak{d} (\mathfrak{q}_h - \Pi_h^k \mathfrak{a}) = 0$, i.e. $\mathfrak{q}_h - \Pi_h^k \mathfrak{a}$ is a closed form in $H_h \mathfrak{L}^k$. From Lemma~\ref{lem: exactness discrete}, a $\mathfrak{f}_h \in H_h \mathfrak{L}^{k - 1}$ exists such that
	\begin{align*}
		\mathfrak{d} \mathfrak{f}_h = \mathfrak{q}_h - \Pi_h^k \mathfrak{a}.
	\end{align*}
	By Corollary~\ref{cor: regular inverse}, we have $\| \mathfrak{a} \|_{H^1 \mathfrak{L}^k} \lesssim \| \mathfrak{q}_h \|_{H\mathfrak{L}^k} $.  In addition, by~\eqref{ine:Pi-approx}, we have
	\begin{align*}
	\| \mathfrak{d}\mathfrak{f}_h \|_{L^2\mathfrak{L}^k}& \lesssim \| \mathfrak{q}_h \|_{L^2\mathfrak{L}^k} + \| \Pi_h^k \mathfrak{a}\|_{L^2\mathfrak{L}^k} \leq  \| \mathfrak{q}_h \|_{L^2\mathfrak{L}^k} + \| \mathfrak{a}\|_{L^2\mathfrak{L}^k} + \| (I - \Pi_h^k) \mathfrak{a} \|_{L^2\mathfrak{L}^k} \\
	& \lesssim \| \mathfrak{q}_h \|_{L^2\mathfrak{L}^k} + \| \mathfrak{a} \|_{H^1 \mathfrak{L}^k} \lesssim \| \mathfrak{q}_h \|_{H\mathfrak{L}^k}.
	\end{align*} 
	Since the choice of $\mathfrak{f}_h$ is not unique, we choose a special $\mathfrak{f}_h$ such that the following Poincar\'{e} inequality holds
	\begin{equation*}
	\| \mathfrak{f}_h \|_{H \mathfrak{L}^{k-1}} \lesssim \| \mathfrak{d} \mathfrak{f}_h \|_{L^2 \mathfrak{L}^k}.
	\end{equation*}
	Then we have $\| \mathfrak{f}_h \|_{H \mathfrak{L}^{k-1}} \lesssim \| \mathfrak{q}_h \|_{H \mathfrak{L}^k}$, which completes the proof.  
\end{proof}

To further develop the decomposition in the discrete setting, let us introduce $H_h^1 \mathfrak{L}^k \subseteq H^1 \mathfrak{L}^k$ as the discretization of the regular $k$-forms from \eqref{eq: regular k-forms}. For the given choice of discrete spaces \eqref{eq: choice of spaces}, the regular spaces are given by
\begin{align*}
	H_h^1 \mathfrak{L}^k = \prod_{i \in I} (P_r^- \Lambda^0(\Omega_{i, h}))^{C_{d_i, k_i}}.
\end{align*}
Again, the exponent is given by
$C_{d_i, k_i} := \left( \begin{smallmatrix} d_i \\ k_i \end{smallmatrix} \right)$. 
In the lowest-order case with $r = 1$, this means that
\begin{align*}
	H_h^1 \mathfrak{L}^k = \prod_{i \in I} (\mathbb{P}_1(\Omega_{i, h}))^{C_{d_i, k_i}}.
\end{align*}
In other words, all discrete forms with increased regularity are given by (tuples of) nodal Lagrange elements. Similar to \eqref{eq: properties H1}, we note that
% \begin{align} \label{eq: H1 = H for k_i=0 discrete}
% 	H_h^1 \mathfrak{L}^0 = H_h \mathfrak{L}^0 = \mathbb{P}^1(\Omega_h^n)
% \end{align}
\begin{subequations}
\begin{align} \label{eq: H1 = H for k_i=0 discrete}
	\iota_i H_h^1 \mathfrak{L}^k &= \iota_i H_h \mathfrak{L}^k, 
	& i \in I^{n - k}, \\
	\Bbbd H_h^1 \mathfrak{L}^k &\subseteq H_h^1 \mathfrak{L}^{k + 1}.
\end{align}
\end{subequations}
Let $\mathcal{P}_h^k$ be the projection operator onto the discretized space of regular $k$-forms. $\mathcal{P}_h^k: H^1 \mathfrak{L}^k \mapsto H_h^1 \mathfrak{L}^k$ is stable and has the following property:
\begin{align} \label{eq: approximation}
	\| \mathcal{P}_h^k \mathfrak{a} \|_{H^1\mathfrak{L}^k} \lesssim \| \mathfrak{a} \|_{H^1 \mathfrak{L}^k} &, &
	\| (I - \mathcal{P}_h^k) \mathfrak{a} \|_{L^2 \mathfrak{L}^k}
	&\lesssim h \| \mathfrak{a} \|_{H^1 \mathfrak{L}^k},
\end{align}
for all $\mathfrak{a} \in H^1 \mathfrak{L}^k$.  
Now we are ready to present the mixed-dimensional discrete regular decomposition as follows.

\begin{theorem}[Mixed-dimensional Discrete Regular Decomposition] \label{thm: discrete triplet}
	Given $\mathfrak{q}_h \in H_h \mathfrak{L}^k$, then there exists a pair $(\mathfrak{a}_h, \mathfrak{f}_h) \in H_h^1 \mathfrak{L}^k \times H_h \mathfrak{L}^{k - 1}$ and a high-frequency term $\mathfrak{b}_h \in H_h \mathfrak{L}^k $ such that
	\begin{align*}
		\mathfrak{q}_h &= \Pi_h^k \mathfrak{a}_h + \mathfrak{b}_h + \mathfrak{d} \mathfrak{f}_h
		& \text{and}&
		& \| \mathfrak{a}_h \|_{H^1 \mathfrak{L}^k} 
		+ \| h^{-1} \mathfrak{b}_h \|_{L^2 \mathfrak{L}^k}
		+ \| \mathfrak{f}_h \|_{H \mathfrak{L}^{k - 1}} 
		&\lesssim \| \mathfrak{q}_h \|_{H \mathfrak{L}^k }.
	\end{align*}
\end{theorem}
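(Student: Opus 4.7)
The plan is to combine the semi-discrete decomposition from Lemma~\ref{lem: semidiscrete decomposition} with the projection $\mathcal{P}_h^k$ onto $H_h^1 \mathfrak{L}^k$, and then absorb the discretization error into the high-frequency term $\mathfrak{b}_h$. This mirrors the strategy used in the fixed-dimensional setting (Theorems~\ref{thm:discrete_regular_Hcurl}--\ref{thm:discrete_regular_Hdiv}), now phrased purely in the mixed-dimensional language.

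First, I would apply Lemma~\ref{lem: semidiscrete decomposition} to $\mathfrak{q}_h$ to obtain $\mathfrak{a} \in H^1 \mathfrak{L}^k$ and $\mathfrak{f}_h \in H_h \mathfrak{L}^{k-1}$ with $\mathfrak{q}_h = \Pi_h^k \mathfrak{a} + \mathfrak{d}\mathfrak{f}_h$ and the bound $\| \mathfrak{a} \|_{H^1 \mathfrak{L}^k} + \| \mathfrak{f}_h \|_{H \mathfrak{L}^{k-1}} \lesssim \| \mathfrak{q}_h \|_{H \mathfrak{L}^k}$. Next, I would discretize the regular component by setting $\mathfrak{a}_h := \mathcal{P}_h^k \mathfrak{a} \in H_h^1 \mathfrak{L}^k$ and defining the residual
\begin{align*}
    \mathfrak{b}_h := \Pi_h^k (\mathfrak{a} - \mathfrak{a}_h) = \Pi_h^k (I - \mathcal{P}_h^k) \mathfrak{a} \in H_h \mathfrak{L}^k.
\end{align*}
By construction, $\mathfrak{q}_h = \Pi_h^k \mathfrak{a}_h + \mathfrak{b}_h + \mathfrak{d}\mathfrak{f}_h$, which is the desired decomposition.

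The remaining task is to verify the three stability estimates. For $\mathfrak{a}_h$, the $H^1\mathfrak{L}^k$-stability in \eqref{eq: approximation} gives $\|\mathfrak{a}_h\|_{H^1\mathfrak{L}^k} \lesssim \|\mathfrak{a}\|_{H^1\mathfrak{L}^k} \lesssim \|\mathfrak{q}_h\|_{H\mathfrak{L}^k}$, and the bound on $\mathfrak{f}_h$ is inherited directly from Lemma~\ref{lem: semidiscrete decomposition}. The bound on the high-frequency term is the one that requires the most care: writing
\begin{align*}
    \|\mathfrak{b}_h\|_{L^2\mathfrak{L}^k}
    \le \|(I - \mathcal{P}_h^k)\mathfrak{a}\|_{L^2\mathfrak{L}^k}
    + \|(I - \Pi_h^k)(I - \mathcal{P}_h^k)\mathfrak{a}\|_{L^2\mathfrak{L}^k},
\end{align*}
and using \eqref{ine:Pi-approx} on the second term together with the $H^1\mathfrak{L}^k$-stability of $\mathcal{P}_h^k$, both terms are bounded by $h \|\mathfrak{a}\|_{H^1\mathfrak{L}^k} \lesssim h \|\mathfrak{q}_h\|_{H\mathfrak{L}^k}$, which yields $\|h^{-1}\mathfrak{b}_h\|_{L^2\mathfrak{L}^k} \lesssim \|\mathfrak{q}_h\|_{H\mathfrak{L}^k}$ as desired.

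The main technical subtlety is the bound on $\mathfrak{b}_h$: one must apply $\Pi_h^k$ to an object that is merely in $H^1\mathfrak{L}^k$ (not in $H_h\mathfrak{L}^k$), and the only quantitative control of $\Pi_h^k$ at our disposal is the $O(h)$ approximation estimate \eqref{ine:Pi-approx}. The splitting above turns this apparent obstacle into a straightforward application of \eqref{ine:Pi-approx} combined with the stability of $\mathcal{P}_h^k$, which is the essential ingredient that transfers the regularity of $\mathfrak{a}$ into the high-frequency estimate for $\mathfrak{b}_h$.
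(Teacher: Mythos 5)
Your proposal is correct and follows essentially the same route as the paper's proof: apply Lemma~\ref{lem: semidiscrete decomposition}, set $\mathfrak{a}_h := \mathcal{P}_h^k \mathfrak{a}$ and $\mathfrak{b}_h := \Pi_h^k(I - \mathcal{P}_h^k)\mathfrak{a}$, and bound the high-frequency term by the same triangle-inequality splitting combined with \eqref{ine:Pi-approx}, \eqref{eq: approximation}, and the $H^1\mathfrak{L}^k$-stability of $\mathcal{P}_h^k$. No substantive differences.
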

\begin{proof}
	Given the decomposition from Lemma~\ref{lem: semidiscrete decomposition}, we further decompose $\mathfrak{a}$ using the projection operator $\mathcal{P}_h^k$ from \eqref{eq: approximation}:
	\begin{align*}
		\mathfrak{q}_h = \Pi_h^k \mathcal{P}_h^k \mathfrak{a} + \Pi_h^k (I - \mathcal{P}_h^k) \mathfrak{a} + \mathfrak{d} \mathfrak{f}_h.
	\end{align*}
	By defining $\mathfrak{a}_h := \mathcal{P}_h^k \mathfrak{a}$ and $\mathfrak{b}_h := \Pi_h^k (I - \mathcal{P}_h^k) \mathfrak{a}$, we obtain the desired format. To prove the boundedness, we use the stability of $\mathcal{P}_h^k$ and approximation properties~\eqref{ine:Pi-approx} and~\eqref{eq: approximation} to derive
	% \begin{subequations}
		\begin{align*}
			\| \mathfrak{a}_h \|_{H^1 \mathfrak{L}^k} 
			&= \| \mathcal{P}_h^k \mathfrak{a} \|_{H^1 \mathfrak{L}^k}
			\lesssim \| \mathfrak{a} \|_{H^1 \mathfrak{L}^k} \lesssim \| \mathfrak{q}_h \|_{H\mathfrak{L}^k} , \\
			\| h^{-1} \mathfrak{b}_h \|_{L^2 \mathfrak{L}^k}
			&= \| h^{-1} \Pi_h^k (I - \mathcal{P}_h^k) \mathfrak{a} \|_{L^2 \mathfrak{L}^k}
			\lesssim \| h^{-1} (I - \mathcal{P}_h^k) \mathfrak{a} \|_{L^2 \mathfrak{L}^k} + \| h^{-1} (I - \Pi_h^k) (I- \mathcal{P}_h^k) \mathfrak{a}\|_{L^2 \mathfrak{L}^k} \\
			&  
			\lesssim \| \mathfrak{a} \|_{H^1 \mathfrak{L}^k} +  \| (I - \mathcal{P}_h^k) \mathfrak{a} \|_{H^1 \mathfrak{L}^k} \lesssim \| \mathfrak{a} \|_{H^1 \mathfrak{L}^k}.
		\end{align*}
	% \end{subequations}
	Combining these estimates with the bound of Lemma~\ref{lem: semidiscrete decomposition} proves the result.
\end{proof}

In practice, it is useful to integrate discrete regular decompositions so that all the components have improved regularity except the high-frequency parts.  Such a result is shown as follows.  

\begin{corollary}[Integrated Mixed-dimensional Discrete Regular Decomposition] \label{coro: discrete decomposition}
	Given $\mathfrak{q}_h \in H_h \mathfrak{L}^k$, then there exist a regular pair $(\mathfrak{a}_h, \mathfrak{c}_h) \in H_h^1 \mathfrak{L}^k \times H_h^1 \mathfrak{L}^{k - 1}$ and a high-frequency pair $(\mathfrak{b}_h, \mathfrak{e}_h) \in H_h \mathfrak{L}^k \times H_h \mathfrak{L}^{k - 1}$ such that
\begin{subequations}
	\begin{align}
		\Pi_h^k \mathfrak{a}_h + \mathfrak{b}_h + \mathfrak{d} ( \Pi_h^{k - 1} \mathfrak{c}_h + \mathfrak{e}_h) 
		&= 
		\mathfrak{q}_h \\
		% & \text{and}&
		\| \mathfrak{a}_h \|_{H^1 \mathfrak{L}^k} 
		+ \| h^{-1} \mathfrak{b}_h \|_{L^2 \mathfrak{L}^k}
		+ \| \mathfrak{c}_h \|_{H^1 \mathfrak{L}^{k - 1}} 
		+ \| h^{-1} \mathfrak{e}_h \|_{L^2 \mathfrak{L}^{k - 1}}
		&\lesssim \| \mathfrak{q}_h \|_{H \mathfrak{L}^k }.
	\end{align}
\end{subequations}
\end{corollary}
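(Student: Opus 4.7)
The plan is to obtain this integrated decomposition by applying Theorem~\ref{thm: discrete triplet} twice: once to $\mathfrak{q}_h$ at level $k$, and then again to the potential $\mathfrak{f}_h$ at level $k-1$. The key observation is that any piece of the inner decomposition lying in the image of $\mathfrak{d}$ will be annihilated when we apply $\mathfrak{d}$ again (via the closedness property $\mathfrak{dd}=0$ from \eqref{eq: closedness}), so the residual term $\mathfrak{f}_h$ from the first decomposition only needs to be resolved modulo closed forms.

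More concretely, I would first invoke Theorem~\ref{thm: discrete triplet} on $\mathfrak{q}_h \in H_h \mathfrak{L}^k$ to obtain $(\mathfrak{a}_h,\mathfrak{f}_h) \in H_h^1\mathfrak{L}^k \times H_h\mathfrak{L}^{k-1}$ and a high-frequency term $\mathfrak{b}_h \in H_h\mathfrak{L}^k$ with
\begin{align*}
\mathfrak{q}_h &= \Pi_h^k \mathfrak{a}_h + \mathfrak{b}_h + \mathfrak{d}\mathfrak{f}_h, &
\| \mathfrak{a}_h \|_{H^1\mathfrak{L}^k}
+ \| h^{-1}\mathfrak{b}_h \|_{L^2\mathfrak{L}^k}
+ \| \mathfrak{f}_h \|_{H\mathfrak{L}^{k-1}}
&\lesssim \| \mathfrak{q}_h \|_{H\mathfrak{L}^k}.
\end{align*}
Then I would apply the same theorem to $\mathfrak{f}_h \in H_h\mathfrak{L}^{k-1}$, producing $(\mathfrak{c}_h,\mathfrak{g}_h) \in H_h^1 \mathfrak{L}^{k-1} \times H_h\mathfrak{L}^{k-2}$ and a high-frequency term $\mathfrak{e}_h \in H_h\mathfrak{L}^{k-1}$ satisfying
\begin{align*}
\mathfrak{f}_h &= \Pi_h^{k-1}\mathfrak{c}_h + \mathfrak{e}_h + \mathfrak{d}\mathfrak{g}_h, &
\| \mathfrak{c}_h \|_{H^1\mathfrak{L}^{k-1}}
+ \| h^{-1}\mathfrak{e}_h \|_{L^2\mathfrak{L}^{k-1}}
&\lesssim \| \mathfrak{f}_h \|_{H\mathfrak{L}^{k-1}}.
\end{align*}

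Applying $\mathfrak{d}$ to this identity and using $\mathfrak{dd}\mathfrak{g}_h = 0$ from \eqref{eq: closedness} collapses the $\mathfrak{g}_h$ contribution, giving $\mathfrak{d}\mathfrak{f}_h = \mathfrak{d}(\Pi_h^{k-1}\mathfrak{c}_h + \mathfrak{e}_h)$. Substituting this into the first decomposition yields the claimed identity $\mathfrak{q}_h = \Pi_h^k\mathfrak{a}_h + \mathfrak{b}_h + \mathfrak{d}(\Pi_h^{k-1}\mathfrak{c}_h + \mathfrak{e}_h)$. Chaining the two stability estimates above then delivers the combined bound.

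I do not anticipate a serious obstacle here, since the corollary is essentially a bookkeeping consequence of Theorem~\ref{thm: discrete triplet}; the only subtlety worth checking is that $\mathfrak{f}_h$ genuinely lies in the discrete space to which the theorem applies (which is immediate from its construction in Lemma~\ref{lem: semidiscrete decomposition}) and that one does not need any additional commutation between $\mathfrak{d}$ and $\Pi_h^{k-1}$ for the residual piece, since $\mathfrak{d}\mathfrak{g}_h$ is discarded rather than projected. For completeness, the edge case $k=1$ poses no issue because $H_h^1\mathfrak{L}^0 = H_h\mathfrak{L}^0$ by \eqref{eq: H1 = H for k_i=0 discrete}, so the regularity assertion on $\mathfrak{c}_h$ is automatic there.
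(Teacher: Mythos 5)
Your proposal is correct and follows essentially the same route as the paper: apply Theorem~\ref{thm: discrete triplet} to $\mathfrak{q}_h$, apply it again to the resulting potential $\mathfrak{f}_h$, discard the $\mathfrak{d}\mathfrak{g}_h$ contribution via $\mathfrak{dd}=0$ from \eqref{eq: closedness}, and chain the two stability bounds. Your handling of the low-order cases via $H_h^1\mathfrak{L}^0 = H_h\mathfrak{L}^0$ also matches the paper's treatment of $k\le 1$.
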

\begin{proof}
	Since $H_h^1 \mathfrak{L}^0 = H_h \mathfrak{L}^0$ by \eqref{eq: H1 = H for k_i=0 discrete}, the result is trivial for $k = 0$. We use the same argument in combination with Lemma~\ref{thm: discrete triplet} to conclude that the case $k = 1$ follows with $\mathfrak{e}_h = 0$. 

	We continue with $k > 1$.
	Using the decomposition from Lemma~\ref{thm: discrete triplet}, we obtain $(\mathfrak{a}_h, \mathfrak{b}_h, \mathfrak{f}_h) \in H_h^1 \mathfrak{L}^k \times H_h \mathfrak{L}^k \times H_h \mathfrak{L}^{k - 1}$ such that
	\begin{align*}
		\mathfrak{q}_h &= \Pi_h^k \mathfrak{a}_h + \mathfrak{b}_h + \mathfrak{d} \mathfrak{f}_h,
	\end{align*}
	with the associated bound. Applying Lemma~\ref{thm: discrete triplet} once more on $\mathfrak{f}_h \in H_h \mathfrak{L}^{k - 1}$, we have $(\mathfrak{c}_h, \mathfrak{e}_h, \mathfrak{g}_h) \in H_h^1 \mathfrak{L}^{k - 1} \times H_h \mathfrak{L}^{k - 1} \times H_h \mathfrak{L}^{k - 2}$ such that
	\begin{align*}
		\mathfrak{f}_h &= \Pi_h^{k - 1} \mathfrak{c}_h + \mathfrak{e}_h + \mathfrak{d} \mathfrak{g}_h
	\end{align*}
	Due to \eqref{eq: closedness}, we have $\mathfrak{ddg}_h = 0$ and the result follows.
\end{proof}

Similarly, in the discrete case, we also have the regular inverse as the byproduct of the discrete regular decomposition, which is stated in the following corollary. 

\begin{corollary}[Discrete Regular Inverse] \label{cor: discrete regular inverse}
	Given $\mathfrak{q}_h \in H_h \mathfrak{L}^k$, then there exist $\mathfrak{a}_h \in H_h^1 \mathfrak{L}^k$ and a high-frequency term $\mathfrak{b}_h \in H_h \mathfrak{L}^k$ such that
	\begin{align}
		\mathfrak{d} (\mathfrak{q}_h - \Pi_h^k \mathfrak{a}_h - \mathfrak{b}_h) &= 0 
		& \text{and}&
		& \| \mathfrak{a}_h \|_{H^1 \mathfrak{L}^k} 
		+ \| h^{-1} \mathfrak{b}_h \|_{L^2 \mathfrak{L}^k} 
		&\lesssim \| \mathfrak{q}_h \|_{H \mathfrak{L}^k }
	\end{align}
\end{corollary}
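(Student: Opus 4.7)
The plan is to deduce this corollary directly from the Mixed-dimensional Discrete Regular Decomposition (Theorem~\ref{thm: discrete triplet}), in complete analogy with how the continuous Corollary~\ref{cor: regular inverse} was obtained from Theorem~\ref{thm: regular decomposition}. The idea is that once we have the decomposition $\mathfrak{q}_h = \Pi_h^k \mathfrak{a}_h + \mathfrak{b}_h + \mathfrak{d}\mathfrak{f}_h$, applying $\mathfrak{d}$ and invoking $\mathfrak{dd} = 0$ kills the $\mathfrak{f}_h$ contribution, which is precisely what is needed to conclude that $\mathfrak{q}_h - \Pi_h^k \mathfrak{a}_h - \mathfrak{b}_h$ is closed.

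Concretely, first I would apply Theorem~\ref{thm: discrete triplet} to the given $\mathfrak{q}_h \in H_h \mathfrak{L}^k$ to obtain the triple $(\mathfrak{a}_h, \mathfrak{b}_h, \mathfrak{f}_h) \in H_h^1 \mathfrak{L}^k \times H_h \mathfrak{L}^k \times H_h \mathfrak{L}^{k-1}$ with
\begin{align*}
\mathfrak{q}_h &= \Pi_h^k \mathfrak{a}_h + \mathfrak{b}_h + \mathfrak{d} \mathfrak{f}_h, &
\| \mathfrak{a}_h \|_{H^1 \mathfrak{L}^k} + \| h^{-1} \mathfrak{b}_h \|_{L^2 \mathfrak{L}^k} + \| \mathfrak{f}_h \|_{H \mathfrak{L}^{k - 1}} &\lesssim \| \mathfrak{q}_h \|_{H \mathfrak{L}^k}.
\end{align*}
Next, I would simply discard $\mathfrak{f}_h$ and keep the pair $(\mathfrak{a}_h, \mathfrak{b}_h)$. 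Applying the mixed-dimensional differential $\mathfrak{d}$ to the identity above and using the closedness property \eqref{eq: closedness} to eliminate $\mathfrak{dd}\mathfrak{f}_h = 0$ yields $\mathfrak{d}(\mathfrak{q}_h - \Pi_h^k \mathfrak{a}_h - \mathfrak{b}_h) = 0$, which is the desired equation.

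The norm bound is then immediate: the required quantity $\|\mathfrak{a}_h\|_{H^1\mathfrak{L}^k} + \|h^{-1}\mathfrak{b}_h\|_{L^2\mathfrak{L}^k}$ is only a subset of the terms appearing on the left-hand side of the bound provided by Theorem~\ref{thm: discrete triplet}, so it is automatically controlled by $\|\mathfrak{q}_h\|_{H\mathfrak{L}^k}$. Since both substantive ingredients, namely the discrete regular decomposition and the identity $\mathfrak{dd} = 0$, are already fully established, I do not anticipate any genuine obstacle in this argument; the proof is essentially a one-line corollary. The only minor thing to note is that closedness \eqref{eq: closedness} is stated at the continuous level, but since $H_h \mathfrak{L}^{k-1} \subset H \mathfrak{L}^{k-1}$ it applies verbatim to $\mathfrak{f}_h$.
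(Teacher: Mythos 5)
Your proof is correct and follows essentially the same one-line argument as the paper: apply the discrete regular decomposition and use $\mathfrak{dd}=0$ to eliminate the exact part. The only cosmetic difference is that you invoke Theorem~\ref{thm: discrete triplet} while the paper cites the integrated version (Corollary~\ref{coro: discrete decomposition}); both yield the identical conclusion since the required pair $(\mathfrak{a}_h,\mathfrak{b}_h)$ and its bound appear in either statement.
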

\begin{proof}
	Follows from Theorem~\ref{coro: discrete decomposition} and the fact that $\mathfrak{dd} = 0$ from \eqref{eq: closedness}.
\end{proof}

\section{Mixed-dimensional Auxiliary Space Preconditioner}
\label{sec:preconditioner}

Based on the (discrete) regular decomposition, we develop robust preconditioners for solving the following abstract mixed-dimensional problem: Find~$\mathfrak{q}_h \in H_h \mathfrak{L}^k$, such that
\begin{align} \label{eq:md_problem}
	(\mathfrak{q}_h, \tilde{\mathfrak{q}}_h)_{L^2 \mathfrak{L}^k} 
	+ (\mathfrak{d} \mathfrak{q}_h, \mathfrak{d} \tilde{\mathfrak{q}}_h)_{L^2 \mathfrak{L}^{k + 1}} 
	& = (\mathfrak{f},  \tilde{\mathfrak{q}}_h)_{L^2 \mathfrak{L}^k} 
	& \forall \, \tilde{\mathfrak{q}}_h \in H_h \mathfrak{L}^k.
\end{align}
This can be written into a linear system $ \mathfrak{A}^k \mathfrak{q}_h = \mathfrak{f} $, where $ \mathfrak{A}^k =\mathfrak{I} + \mathfrak{d}^* \mathfrak{d} $ is a symmetric positive definite operator on $ H_h \mathfrak{L}^k $, $ \mathfrak{I} $ is the identity mapping and $ \mathfrak{d}^* $ is the adjoint of $ \mathfrak{d} $. Our goal is to derive a preconditioner $ \mathfrak{B} $ for the problem \eqref{eq:md_problem} based on the fictitious or auxiliary space preconditioning theory developed in \cite{nepomnyaschikh1992decomposition, xu1996auxiliary, hiptmair:xu}.

\subsection{Abstract Theory of Auxiliary Space Preconditioning}
\label{sub:auxiliary_space}

We recall the framework of the auxiliary space theory. Assume $ V $ is a separable Hilbert space with an inner product $ a(\cdot, \cdot) $. We aim to find $ u \in V $ that solves
\begin{align} \label{eq:general_elliptic_problem}
	a(u, v) & = (f, v) & \forall v & \in V,  
\end{align}
or equivalently 
\begin{align} \label{eq:Au=f}
	A u & = f, & 
\end{align}
where $ A : V \mapsto V' $ is symmetric positive definite such that $ \langle A u, v \rangle = a(u, v) $. Here $V'$ is the dual of $V$.  Using $A$, the norm induced by $ a(\cdot, \cdot) $ can be denoted by $\| \cdot \|_A$ and we also consider another inner product $ s(\cdot, \cdot) $ on $ V $, which induces another norm $\| \cdot \|_S$ with $S$ being symmetric positive definite. 

For designing auxiliary preconditioners, let $ W_\ell $, $ \ell = 1, 2, \dots, L $, be auxiliary spaces with inner products $ a_\ell(\cdot, \cdot) $ that induces norms $\| \cdot \|_{A_\ell}$, where $ A_\ell : W_\ell \mapsto W_\ell' $ are linear operators defined as $ \langle A_\ell u_\ell, v_\ell \rangle = a_\ell(u_\ell, v_\ell) $, for $ u_\ell, v_\ell \in W_\ell $, $\ell = 1, 2, \dots, L$.  In addition, we assume that there are transfer operators $ \Pi_\ell : W_\ell \mapsto V $.   Finally, we define the auxiliary product space $ \bar{V} = V \times W_1 \times W_2 \times \dots \times W_L $, and then represent the inner product on $ \bar{V} $ as
\begin{align*}
	\bar{a}(\bar{v}, \bar{v}) & = s(v, v) + \sum\limits_{\ell = 1}^{L} a_l(w_\ell, w_\ell) & \forall \, \bar{v} = (v, w_1, \dots, w_L) & \in \bar{V}.
\end{align*}
Using the fictitious or  auxiliary space method, the preconditioner $ B : V' \mapsto V $ for the linear problem \eqref{eq:Au=f} is defined as
\begin{equation} \label{eq:aux_space_precond}
	B = S^{-1} + \sum\limits_{\ell = 1}^{L} \Pi_\ell A_\ell^{-1} \Pi_\ell^*,
\end{equation}
where $S^{-1}$ is the so-called smoother operator. The following Lemma from \cite{hiptmair:xu}, which can be viewed as a special case of the fictitious lemma~\cite{nepomnyaschikh1992decomposition}, gives a bound on the condition number $ \kappa(BA) $.

\begin{lemma} \label{lem:aux_space_cond}
	Assume the following conditions hold:
	\begin{enumerate}
		\item There exist $ c_\ell > 0, \, \ell = 1, \dots, L $ such that $ \| \Pi_\ell w_\ell \|_A \leq c_\ell \| w_\ell \|_{A_\ell}, \quad \forall \, w_\ell \in W_\ell $.
		\item There exist $ c_s > 0 $ such that $ \| v \|_A \leq c_s \| v \|_S, \quad \forall \, v \in V $.
		\item For every $ v \in V $, there exists a decomposition $ v = v_0 + \sum\limits_{\ell = 1}^{L} w_\ell, \, v_0 \in V, \, w_\ell \in W_\ell $ and $ c_0 > 0 $ such that
		\begin{equation*}
			\| v_0 \|_S^2 + \sum\limits_{\ell = 1}^{L} \| w_\ell \|_{A_\ell}^2 \leq c_0 \| v \|_A^2.
		\end{equation*}
	\end{enumerate}
	Then $ \kappa(BA) \leq c_0^2 (c_s^2 + c_1^2 + \dots + c_L^2) $.
\end{lemma}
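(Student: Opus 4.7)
The plan is to view $B$ as the operator produced by Nepomnyaschikh's fictitious space construction applied to the composite transfer map $\bar\Pi : \bar V \to V$ defined by $\bar\Pi(v_0, w_1, \ldots, w_L) := v_0 + \sum_{\ell=1}^L \Pi_\ell w_\ell$. The key preparatory step is the variational identity
\[
\langle B^{-1} v, v \rangle \;=\; \inf\left\{ \|v_0\|_S^2 + \sum_{\ell=1}^L \|w_\ell\|_{A_\ell}^2 \; : \; v = v_0 + \sum_{\ell=1}^L \Pi_\ell w_\ell \right\}, \qquad \forall\, v \in V,
\]
which I would derive directly from the additive formula \eqref{eq:aux_space_precond}: rewriting $B = \bar\Pi \bar A^{-1} \bar\Pi^*$ with $\bar A$ the block-diagonal operator on $\bar V$ induced by $\bar a$, the infimum on the right is a constrained quadratic minimisation over the fibre $\bar\Pi^{-1}(v)$, whose minimiser $(v_0, w_\ell) = \bar A^{-1} \bar\Pi^* B^{-1} v$ (obtained by a Lagrange multiplier / orthogonal decomposition argument) has $\bar a$-energy equal to $\langle B^{-1} v, v \rangle$.

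With the identity in hand, I would use that $BA$ is self-adjoint with respect to $a(\cdot,\cdot)$ to express
\[
\lambda_{\min}(BA) = \inf_{v \ne 0} \frac{\|v\|_A^2}{\langle B^{-1} v, v \rangle}, \qquad \lambda_{\max}(BA) = \sup_{v \ne 0} \frac{\|v\|_A^2}{\langle B^{-1} v, v \rangle}.
\]
For the lower bound on $\lambda_{\min}(BA)$, I would insert the stable decomposition from hypothesis~3 as a feasible competitor in the infimum, obtaining $\langle B^{-1} v, v \rangle \le c_0^2 \|v\|_A^2$ and hence $\lambda_{\min}(BA) \ge c_0^{-2}$. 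For the upper bound on $\lambda_{\max}(BA)$, I would take an \emph{arbitrary} admissible decomposition and estimate, via the triangle inequality and hypotheses~1 and~2,
\[
 \|v\|_A \;\le\; \|v_0\|_A + \sum_{\ell=1}^L \|\Pi_\ell w_\ell\|_A \;\le\; c_s \|v_0\|_S + \sum_{\ell=1}^L c_\ell \|w_\ell\|_{A_\ell},
\]
then apply the discrete Cauchy--Schwarz inequality and pass to the infimum to conclude $\|v\|_A^2 \le \bigl(c_s^2 + \sum_\ell c_\ell^2\bigr) \langle B^{-1} v, v \rangle$, i.e.\ $\lambda_{\max}(BA) \le c_s^2 + \sum_\ell c_\ell^2$. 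Multiplying the two spectral bounds yields the claimed estimate on $\kappa(BA)$.

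The main obstacle is the variational identity itself: everything afterwards reduces to elementary inequalities, but converting the additive operator formula \eqref{eq:aux_space_precond} into the minimal-energy representation of $\langle B^{-1}\cdot,\cdot\rangle$ is the one nontrivial analytic step. Once it is established, hypotheses~1--2 control the maximum eigenvalue and hypothesis~3 controls the minimum, and the product gives the stated condition number bound.
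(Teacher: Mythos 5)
Your argument is correct and is precisely the standard proof of the fictitious/auxiliary space lemma: the paper itself offers no proof, deferring to \cite{hiptmair:xu} and \cite{nepomnyaschikh1992decomposition}, and those references argue exactly as you do --- the variational identity $\langle B^{-1}v,v\rangle=\inf\{\|v_0\|_S^2+\sum_\ell\|w_\ell\|_{A_\ell}^2\}$ over admissible decompositions (with minimiser $\bar A^{-1}\bar\Pi^*B^{-1}v$), followed by the Rayleigh-quotient bounds on $\lambda_{\min}(BA)$ and $\lambda_{\max}(BA)$. The only blemish is a normalization mismatch you inherited from the literature rather than from the statement: hypothesis~3 as written here bounds the decomposition energy by $c_0\|v\|_A^2$ (not $c_0^2\|v\|_A^2$), so inserting it as a competitor yields $\lambda_{\min}(BA)\geq c_0^{-1}$ and hence $\kappa(BA)\leq c_0\,(c_s^2+\sum_\ell c_\ell^2)$, which matches the stated conclusion only under the harmless convention $c_0\geq 1$ (or after restating hypothesis~3 with $c_0^2$, as in \cite{hiptmair:xu}).
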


If all the bounds in \Cref{lem:aux_space_cond} are independent of discretization parameter $ h $ (and any other parameters), then $ B $ is a robust preconditioner for $ A $ in \eqref{eq:Au=f}. The auxiliary space preconditioner $B$ \eqref{eq:aux_space_precond} can be viewed as additive version.  As mentioned in~\cite{hiptmair:xu}, naturally, we can also apply auxiliary spaces successively to obtain a multiplicative auxiliary space preconditioner, which is also robust under the same conditions, we refer to~\cite{hu2013combined} for details.  Note that instead of directly applying operators $ A_\ell^{-1} $, we can replacing them by their spectrally equivalent approximations, $B_{\ell}$.  As long as the constants in the spectral equivalence are independent of physical and discretization parameters, the resulting auxiliary space preconditioners remain robust.   

\begin{remark}
	For example, \cite{hiptmair:xu} shows that the fixed-dimensional discrete regular decomposition in \Cref{thm:discrete_regular_Hdiv} follows the conditions of \Cref{lem:aux_space_cond}. Let $ V = H_h(\nabla \cdot, \Omega_i) $ and for $ u, v \in V $ let the bilinear form $ a(u, v) = (u, v) + (\nabla \cdot u, \nabla \cdot v) $ in \eqref{eq:general_elliptic_problem}. The auxiliary space theory gives the following preconditioner for solving \eqref{eq:Au=f}. Take the auxiliary spaces $ W_1 = (H_h(\nabla, \Omega_i))^3 $, $ W_2 = H_h(\nabla \times, \Omega_i) $, $ W_3 = (H_h(\nabla, \Omega_i))^3 $ and transfer operators $ \Pi_1 = \Pi^{\nabla \cdot}_h $, $ \Pi_2 = \nabla \times $, $ \Pi_3 = \nabla \times \Pi^{\nabla \times}_h $. With certain choices of smoothers $ S^{\nabla \cdot} $ and $ S^{\nabla \times} $ on $ H_h(\nabla \cdot, \Omega_i) $ and $ H_h(\nabla \times, \Omega_i) $ (for example, Jacobi smoother), respectively, we get
	\begin{align}
	B = (S^{\nabla \cdot})^{-1} + \Pi^{\nabla \cdot}_h A_{reg}^{-1} (\Pi^{\nabla \cdot}_h)^{*} & + \nabla \times (S^{\nabla \times})^{-1} (\nabla \times)^{*}
	+ \nabla \times \Pi^{\nabla \times}_h A_{reg}^{-1} (\nabla \times \Pi^{\nabla \times}_h)^{*},
	\end{align}
	where $ A_{reg} $ is the linear operator induced by the inner product on $ (H_h(\nabla, \Omega_i))^3 $. In the following section, we show that the similar preconditioner is feasible in the mixed-dimensional setting using \Cref{thm: discrete triplet}. 
\end{remark}

\subsection{Mixed-dimensional Preconditioner}
\label{sub:md_preconditioner}

Let us apply the theory in \Cref{sub:auxiliary_space} on the problem \eqref{eq:md_problem} to develop the auxiliary space preconditioner in the mixed-dimensional setting. Following \Cref{thm: discrete triplet}, for any $\mathfrak{q}_h \in H_h \mathfrak{L}^k$, there is a pair $(\mathfrak{a}_h, \mathfrak{f}_h) \in H_h^1 \mathfrak{L}^k \times H_h \mathfrak{L}^{k - 1}$ and a high-frequency term $\mathfrak{b}_h \in H_h \mathfrak{L}^k $ that allows the following decomposition 
\begin{equation*}
	\mathfrak{q}_h = \mathfrak{b}_h + \Pi_h^k \mathfrak{a}_h + \mathfrak{d} \mathfrak{f}_h.
\end{equation*}
Now, besides the original space $ V = H_h \mathfrak{L}^k $, we have two auxiliary spaces $ W_1 = H_h^1 \mathfrak{L}^k $ and $ W_2 = H_h \mathfrak{L}^{k-1} $. Furthermore, we take the transfer operator $ \Pi_1 = \Pi_h^k $ restricted to $ H_h^1 \mathfrak{L}^k $, i.e. $ \Pi_h^k: H_h^1 \mathfrak{L}^k \mapsto H_h \mathfrak{L}^k $, and $ \Pi_2 = \mathfrak{d} $.  
%It follows that $\Pi_h^k$ is uniquely defined, independent of the choice of the original commuting operators from \eqref{eq: commuting diagram}. % TODO: (proof needed?) 
We write $\mathfrak{A}^k_{reg}$ for the symmetric positive definite linear operator defined by the inner product on the space $H^1\mathfrak{L}^k$, which can be viewed as (vector) Laplacian operators in the mixed-dimensional setting.   

For the sake of simplicity, we consider the Jacobi smoother. For a function $ \mathfrak{q}_h \in H_h \mathfrak{L}^k $, we have $\mathfrak{q}_h = \sum_{\mathfrak{e}} \mathfrak{q}_h^{\mathfrak{e}} $, where $\mathfrak{q}_h^{\mathfrak{e}} \in \operatorname{span}\{\mathfrak{e}\}$ where $\mathfrak{e}$ denotes a degree of freedom defined on either a node, edge, face or cell of $ \Omega_h $.  Then the smoothing operator is characterized by the inner product
\begin{equation*}
s(\mathfrak{q}_h, \mathfrak{q}_h) = \sum_{\mathfrak{e}} \left( (\mathfrak{q}_h^{\mathfrak{e}} ,\mathfrak{q}_h^{\mathfrak{e}} )_{L^2\mathfrak{L}^k} + (\mathfrak{d}\mathfrak{q}_h^{\mathfrak{e}} , \mathfrak{d} \mathfrak{q}_h^{\mathfrak{e}} )_{L^2\mathfrak{L}^{k+1}} \right).  
\end{equation*}
This leads to a smoother $\mathfrak{S}^k$, which, in matrix representation, coincides with the diagonal of $\mathfrak{A}^k$. 

The auxiliary space preconditioner $ \mathfrak{B} : (H_h \mathfrak{L}^k)' \mapsto H_h \mathfrak{L}^k $ for \eqref{eq:md_problem} takes the following form
\begin{equation} \label{eq:md_precond_1}
	\mathfrak{B}^k = (\mathfrak{S}^k)^{-1} + \Pi_h^k (\mathfrak{A}^k_{reg})^{-1} (\Pi_h^k)^* + \mathfrak{d} (\mathfrak{A}^{k-1})^{-1} \mathfrak{d}^*.
\end{equation}
%Here, $ \mathfrak{A}^{k-1} $ and $ \mathfrak{A}^k_{reg} $ represent the inner products on $ H_h \mathfrak{L}^{k - 1} $ and $ H_h^1 \mathfrak{L}^k $, respectively. We note $ \mathfrak{S} $ as the smoother operator on $ H_h \mathfrak{L}^k $. 
Here, $^*$ denotes the adjoint with respect to the $L^2\mathfrak{L}^k$ inner product and is the standard matrix transpose in the matrix representation.  

In order to show the bound $ \kappa(\mathfrak{B}^k \mathfrak{A}^k) \lesssim 1 $, we need to verify the conditions in \Cref{lem:aux_space_cond} and the results are summarized in the following theorem. 

\begin{theorem} \label{thm:condition_number_Bk}
	Using $\mathfrak{B}^k$ from \eqref{eq:md_precond_1} as a preconditioner for solving the linear system \eqref{eq:md_problem} leads to a condition number $ \kappa(\mathfrak{B}^k \mathfrak{A}^k) \lesssim 1 $, where the hidden constant depends only on $\Omega$ and shape regularity of the mesh.
\end{theorem}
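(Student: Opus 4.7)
The plan is to verify the three hypotheses of Lemma~\ref{lem:aux_space_cond} in the following setting: take $V = H_h\mathfrak{L}^k$ with inner product inducing $A = \mathfrak{A}^k$, auxiliary spaces $W_1 = H_h^1\mathfrak{L}^k$ with $A_1 = \mathfrak{A}^k_{reg}$ and $W_2 = H_h\mathfrak{L}^{k-1}$ with $A_2 = \mathfrak{A}^{k-1}$, transfer operators $\Pi_1 = \Pi_h^k$ restricted to $H_h^1\mathfrak{L}^k$ and $\Pi_2 = \mathfrak{d}$, and smoother $S = \mathfrak{S}^k$ induced by the Jacobi inner product $s(\cdot,\cdot)$ defined in \Cref{sub:md_preconditioner}. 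Under these choices, the preconditioner formula \eqref{eq:md_precond_1} matches exactly the abstract construction \eqref{eq:aux_space_precond}, so the conclusion $\kappa(\mathfrak{B}^k\mathfrak{A}^k) \lesssim 1$ follows once the three hypotheses are checked with constants independent of $h$.

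For the first hypothesis, the bound $\|\Pi_h^k \mathfrak{a}_h\|_{\mathfrak{A}^k} \lesssim \|\mathfrak{a}_h\|_{\mathfrak{A}^k_{reg}}$ follows from the $H\mathfrak{L}^k$-stability of $\Pi_h^k$ (a consequence of the commuting diagram \eqref{eq: commuting diagram} and \eqref{ine:Pi-approx}) combined with the continuous embedding $H^1\mathfrak{L}^k \hookrightarrow H\mathfrak{L}^k$. The bound $\|\mathfrak{d}\mathfrak{f}_h\|_{\mathfrak{A}^k} \lesssim \|\mathfrak{f}_h\|_{\mathfrak{A}^{k-1}}$ is immediate since $\mathfrak{d}\mathfrak{d}=0$ by \eqref{eq: closedness} reduces $\|\mathfrak{d}\mathfrak{f}_h\|_{\mathfrak{A}^k}^2$ to $\|\mathfrak{d}\mathfrak{f}_h\|_{L^2\mathfrak{L}^k}^2$, which is in turn dominated by $\|\mathfrak{f}_h\|_{H\mathfrak{L}^{k-1}}^2$. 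For the second hypothesis, the inequality $\|\mathfrak{q}_h\|_{\mathfrak{A}^k} \lesssim \|\mathfrak{q}_h\|_{\mathfrak{S}^k}$ is a standard spectral estimate for Jacobi smoothing on a shape-regular finite element mesh, obtained by a Gershgorin-type argument applied subdomain by subdomain and then summed over $I$.

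The third hypothesis is the heart of the argument and is essentially packaged in Theorem~\ref{thm: discrete triplet}: that theorem furnishes a decomposition $\mathfrak{q}_h = \mathfrak{b}_h + \Pi_h^k \mathfrak{a}_h + \mathfrak{d}\mathfrak{f}_h$ together with quantitative bounds on the three components. Setting $v_0 := \mathfrak{b}_h$, $w_1 := \mathfrak{a}_h$, $w_2 := \mathfrak{f}_h$, the $A_1$-norm on $w_1$ and the $A_2$-norm on $w_2$ coincide with the $H^1\mathfrak{L}^k$- and $H\mathfrak{L}^{k-1}$-norms of Theorem~\ref{thm: discrete triplet} and are therefore controlled by $\|\mathfrak{q}_h\|_{\mathfrak{A}^k}$. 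For $v_0$, we combine the estimate $\|h^{-1}\mathfrak{b}_h\|_{L^2\mathfrak{L}^k} \lesssim \|\mathfrak{q}_h\|_{H\mathfrak{L}^k}$ from Theorem~\ref{thm: discrete triplet} with the standard local inverse inequality $\|\mathfrak{b}_h\|_{\mathfrak{S}^k}^2 \lesssim h^{-2}\|\mathfrak{b}_h\|_{L^2\mathfrak{L}^k}^2$, which is available because $\mathfrak{S}^k$ is the diagonal of a uniformly sparse finite element operator on a shape-regular mesh.

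The main obstacle is bookkeeping rather than conceptual: to check the second hypothesis and the inverse inequality for $\mathfrak{b}_h$ uniformly in $h$ on the mixed-dimensional grid, one must control the off-diagonal structure of $\mathfrak{A}^k$, which couples subdomains of different dimensions through the jump operator $\Bbbd$. This reduces to standard shape-regularity estimates applied on each $\Omega_{i,h}$ together with the finite overlap of contributions from the jump terms, and the constants then depend only on $\Omega$ and the mesh shape regularity, as claimed.
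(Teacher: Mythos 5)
Your proposal is correct and follows essentially the same route as the paper: both verify the three hypotheses of Lemma~\ref{lem:aux_space_cond} with the identical choice of auxiliary spaces, transfer operators and Jacobi smoother, use the commuting/stability properties of $\Pi_h^k$ together with $\mathfrak{dd}=0$ for the first condition, a finite-overlap argument for the second, and Theorem~\ref{thm: discrete triplet} plus an inverse inequality and $L^2$-stability of the bases to bound $\|\mathfrak{b}_h\|_S$ for the third.
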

\begin{proof}
	We verify the three conditions of \Cref{lem:aux_space_cond}:
\begin{enumerate}
	\item It follows from the properties of $\Pi_h^k$ and $\mathfrak{d}$ that 
	\begin{align}
		&\| \Pi_h^k \mathfrak{a}_h \|^2_{H \mathfrak{L}^k} = \| \Pi_h^k \mathfrak{a}_h \|^2_{L^2 \mathfrak{L}^k} + \| \mathfrak{d} \Pi_h^k \mathfrak{a}_h \|^2_{L^2\mathfrak{L}^{k+1}} 
		\lesssim
	 \| \mathfrak{a}_h \|^2_{H^1 \mathfrak{L}^k} + \| \mathfrak{d} \mathfrak{a}_h \|^2_{L^2\mathfrak{L}^{k+1}}  \lesssim \| \mathfrak{a}_h \|^2_{H^1 \mathfrak{L}^k}, \label{eq:aux_cond_1} \\
	&	\| \mathfrak{d} \mathfrak{f}_h \|_{H \mathfrak{L}^k} = \| \mathfrak{d} \mathfrak{f}_h \|_{L^2\mathfrak{L}^k} \leq \| \mathfrak{f}_h \|_{ H \mathfrak{L}^{k - 1} }, \label{eq:aux_cond_2}
	\end{align}	
	for any $ \mathfrak{a}_h \in H_h^1 \mathfrak{L}^k $ and $ \mathfrak{f}_h \in H_h \mathfrak{L}^{k - 1}$.  This verifies the first condition of \Cref{lem:aux_space_cond}.
	\item %Choose, for example, $ \mathfrak{S} $ as a Jacobi smoother. Considering the discrete de Rham complex in \eqref{eq: discrete de rham}, a function $ \mathfrak{q}_h \in H_h \mathfrak{L}^k  $ consists of a sum of locally supported functions $q^e_h$. Each of these functions corresponds to a degree of freedom $e$ defined on either a node, edge, face or cell of $ \Omega_h $. That is,
	%\begin{align*}
	%	(\mathfrak{q}_h, \mathfrak{q}_h)_{\mathfrak{A}} 
	%	&= \| \mathfrak{q}_h \|_{H \mathfrak{L}^k}^2 
	%	= \| \mathfrak{q}_h \|_{L^2 \mathfrak{L}^k}^2 
	%	+ \| \mathfrak{dq}_h\|_{L^2 \mathfrak{L}^{k + 1}}^2 
	%	= \| \textstyle{ \sum_{e}} q^e_h  \|_{L^2 \mathfrak{L}^k}^2
	%	+ \| \textstyle{ \sum_{e}} \mathfrak{d}q^e_h \|_{L^2 \mathfrak{L}^{k + 1}}^2,
	%\end{align*}
	%The Jacobi smoother is then given by
	%\begin{align*}
	%	(\mathfrak{q}_h, \mathfrak{q}_h)_{\mathfrak{S}} 
	%	&= 
	%	\sum\limits_{e} \| q^e_h \|_{H \mathfrak{L}^k}^2
	%	= \sum\limits_{e} \left( \| q^e_h \|_{L^2 \mathfrak{L}^k}^2
	%	+ \| \mathfrak{d}q^e_h \|_{L^2 \mathfrak{L}^{k + 1}}^2 \right).
	%\end{align*}
	Since each element has a finite number of neighbors, there is a small constant $ c_s > 0 $ such that
	\begin{align} \label{eq:aux_cond_3}
		(\mathfrak{q}_h, \mathfrak{q}_h)_{\mathfrak{A}^k} 
		&= \| \textstyle{ \sum_{\mathfrak{e}}} q^{\mathfrak{e}}_h  \|_{L^2 \mathfrak{L}^k}^2
		+ \| \textstyle{ \sum_{\mathfrak{e}}} \mathfrak{d}q^{\mathfrak{e}}_h \|_{L^2 \mathfrak{L}^{k + 1}}^2
		\leq c^2_s \sum\limits_{\mathfrak{e}} \left( \| q^{\mathfrak{e}}_h \|_{L^2 \mathfrak{L}^k}^2
		+ \| \mathfrak{d}q^{\mathfrak{e}}_h \|_{L^2 \mathfrak{L}^{k + 1}}^2 \right)
		= c_s^2 s(\mathfrak{q}_h, \mathfrak{q}_h),
	\end{align}
	which verifies the second condition of \Cref{lem:aux_space_cond}.
	\item Lastly, we can see from \Cref{thm: discrete triplet} that we only need to show the bound on $ \| \mathfrak{b}_h \|_{S} $,
	\begin{align} \label{eq:aux_cond_4}
		\| \mathfrak{b}_h \|_{S}^2 
		= \sum\limits_{\mathfrak{e}} \| b^{\mathfrak{e}}_h \|_{H \mathfrak{L}^k}^2
    	&= \sum\limits_{\mathfrak{e}} \left( \| b^{\mathfrak{e}}_h \|_{L^2 \mathfrak{L}^k}^2
		+ \| \mathfrak{d}b^{\mathfrak{e}}_h \|_{L^2 \mathfrak{L}^{k + 1}}^2 \right) \nonumber\\
	\text{(Inverse inequality)}		& \lesssim \sum\limits_{\mathfrak{e}} \left( \| b^{\mathfrak{e}}_h \|_{L^2 \mathfrak{L}^k}^2 
		+ \| h^{-1} b^{\mathfrak{e}}_h \|_{L^2 \mathfrak{L}^k}^2 \right) \nonumber\\
	\text{($L^2$-stability of the bases)}	& \lesssim \| \mathfrak{b}_h \|_{L^2 \mathfrak{L}^k}^2 
		+ \| h^{-1} \mathfrak{b}_h \|_{L^2 \mathfrak{L}^k}^2 \nonumber \\
	\text{(\Cref{thm: discrete triplet})}	& \lesssim \| \mathfrak{q}_h \|^2_{H\mathfrak{L}^k}.
	\end{align}

	% \begin{align}
	% 	\| \mathfrak{b}_h \|_{\mathfrak{S}}^2 
	% 	= \sum\limits_{e} \| b^e_h \|_{H \mathfrak{L}^k}^2
	% 	& \lesssim \| \mathfrak{b}_h \|_{H \mathfrak{L}^k}^2 
	% 	\lesssim \| h^{-1} \mathfrak{b}_h \|_{H \mathfrak{L}^k}^2 
	% \end{align}
\end{enumerate} 
Therefore, by applying \Cref{lem:aux_space_cond}, we have that~$ \kappa(\mathfrak{B}^k \mathfrak{A}^k) \lesssim 1 $.
\end{proof}

We note that it is possible to choose different smoother, such as Gauss-Seidel smoother. In fact, one could use any $s(\cdot, \cdot)$ that is spectral equivalent to $\| h^{-1} \cdot \|^2_{L^2\mathfrak{L}^k} + \| \cdot \|^2_{L^2\mathfrak{L}^k}$.

Finally, we can integrate the regular decomposition into the preconditioner by utilizing \Cref{coro: discrete decomposition} to further expand $ \mathfrak{B}^k $. This will be especially useful in \Cref{sec:practical example} when designing a preconditioner for a parameter-dependent saddle point problem in practice. Similarly as before, we set $ V = H_h \mathfrak{L}^k $, $ W_1 = H_h^1 \mathfrak{L}^k $, $ W_2 = H_h \mathfrak{L}^{k-1} $ and $ W_3 = H_h^1 \mathfrak{L}^{k-1} $. The transfer operators are then $ \Pi_1 = \Pi_h^k $ restricted to $ H_h^1 \mathfrak{L}^k $, $ \Pi_2 = \mathfrak{d} $ and $ \Pi_3 = \mathfrak{d} \Pi_h^{k-1} $ restricted to $ H_h^1 \mathfrak{L}^{k-1} $. Again, we still use Jacobi smoother for the sake of simplicity here. The preconditioner $ \mathfrak{B} $ now has the following form

\begin{equation} \label{eq:md_precond_2}
	\mathfrak{B}^k = (\mathfrak{S}^k)^{-1} + \Pi_h^k (\mathfrak{A}^k_{reg})^{-1} (\Pi_h^k)^* + \mathfrak{d} (\mathfrak{S}^{k-1})^{-1} \mathfrak{d}^* + \mathfrak{d} \Pi_h^{k-1} (\mathfrak{A}^{k-1}_{reg})^{-1} (\Pi_h^{k-1})^* \mathfrak{d}^*.
\end{equation}
\vskip 5mm
\noindent The next corollary shows the bound $\kappa(\mathfrak{B}^k\mathfrak{A}^k) \lesssim 1$ using $\mathfrak{B}^k$ from~\eqref{eq:md_precond_2}.

\begin{corollary}
	Using $\mathfrak{B}^k$ from \eqref{eq:md_precond_2} as a preconditioner for solving the linear system \eqref{eq:md_problem} leads to a condition number $ \kappa(\mathfrak{B}^k \mathfrak{A}^k) \lesssim 1 $, where the hidden constant depends only on $\Omega$ and shape regularity of the mesh.
\end{corollary}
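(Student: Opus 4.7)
The plan is to apply \Cref{lem:aux_space_cond} with $V = H_h \mathfrak{L}^k$ and the three auxiliary spaces $W_1 = H_h^1 \mathfrak{L}^k$, $W_2 = H_h \mathfrak{L}^{k-1}$, $W_3 = H_h^1 \mathfrak{L}^{k-1}$, which match the last three summands of \eqref{eq:md_precond_2} via transfer operators $\Pi_1 = \Pi_h^k$, $\Pi_2 = \mathfrak{d}$, and $\Pi_3 = \mathfrak{d}\Pi_h^{k-1}$. The natural regular inner products on $H^1 \mathfrak{L}^k$ and $H^1 \mathfrak{L}^{k-1}$ equip $W_1$ and $W_3$, whereas on $W_2$ I take the smoother bilinear form at level $k-1$, so that the auxiliary solve on $W_2$ is represented by $(\mathfrak{S}^{k-1})^{-1}$.

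For condition 1, the bound on $\Pi_1 = \Pi_h^k$ is exactly \eqref{eq:aux_cond_1} from the proof of \Cref{thm:condition_number_Bk}, and the bound on $\Pi_2 = \mathfrak{d}$ is \eqref{eq:aux_cond_2} with the norm on $W_2$ now interpreted as the level-$(k-1)$ smoother norm, which dominates $\| \mathfrak{d}\mathfrak{f}_h \|_{L^2 \mathfrak{L}^k}$ by definition. For the composite transfer $\Pi_3 = \mathfrak{d}\Pi_h^{k-1}$, I would invoke the commuting diagram \eqref{eq: commuting diagram} to rewrite $\mathfrak{d} \Pi_h^{k-1} \mathfrak{c}_h = \Pi_h^k \mathfrak{d} \mathfrak{c}_h$ and combine the $H\mathfrak{L}^k$-stability of $\Pi_h^k$ with the continuous estimate $\| \mathfrak{d}\mathfrak{c}_h \|_{L^2 \mathfrak{L}^k} \lesssim \| \mathfrak{c}_h \|_{H^1 \mathfrak{L}^{k-1}}$. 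Condition 2 is identical to \eqref{eq:aux_cond_3}, as the smoother on $V$ is unchanged.

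The heart of the argument is condition 3, which is supplied by \Cref{coro: discrete decomposition}. Given $\mathfrak{q}_h \in H_h \mathfrak{L}^k$, the corollary produces a four-way splitting
\begin{align*}
	\mathfrak{q}_h = \mathfrak{b}_h + \Pi_h^k \mathfrak{a}_h + \mathfrak{d}\mathfrak{e}_h + \mathfrak{d}\Pi_h^{k-1}\mathfrak{c}_h,
\end{align*}
with the regular norms $\| \mathfrak{a}_h \|_{H^1 \mathfrak{L}^k}$ and $\| \mathfrak{c}_h \|_{H^1 \mathfrak{L}^{k-1}}$ already bounded by $\| \mathfrak{q}_h \|_{H \mathfrak{L}^k}$. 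It remains to control the two high-frequency terms in the corresponding smoother norms: the bound on $\| \mathfrak{b}_h \|_{\mathfrak{S}^k}$ is precisely the chain of estimates \eqref{eq:aux_cond_4}, and the bound on $\| \mathfrak{e}_h \|_{\mathfrak{S}^{k-1}}$ follows from the identical chain run at level $k-1$, using an inverse inequality, the $L^2$-stability of the nodal basis, and the estimate $\| h^{-1} \mathfrak{e}_h \|_{L^2 \mathfrak{L}^{k-1}} \lesssim \| \mathfrak{q}_h \|_{H \mathfrak{L}^k}$ delivered by \Cref{coro: discrete decomposition}.

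The only real obstacle is bookkeeping, namely checking that the inverse inequality and the $L^2$-stability of the nodal basis at level $k-1$ hold uniformly across all sub-manifolds of $\Omega_h$; this is guaranteed by the shape regularity of the partition, so all constants remain independent of $h$. With the three conditions verified, \Cref{lem:aux_space_cond} yields $\kappa(\mathfrak{B}^k \mathfrak{A}^k) \lesssim 1$ with a hidden constant depending only on $\Omega$ and the shape regularity of the mesh.
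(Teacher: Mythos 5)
Your proposal is correct and follows essentially the same route as the paper: the paper's own proof is a one-line remark that the conditions of \Cref{lem:aux_space_cond} follow from \eqref{eq:aux_cond_1}--\eqref{eq:aux_cond_4} together with the further decomposition of $H_h\mathfrak{L}^{k-1}$, which is exactly the verification you carry out in detail. Your treatment of $\Pi_3 = \mathfrak{d}\Pi_h^{k-1}$ via the commuting diagram and of $\|\mathfrak{e}_h\|_{\mathfrak{S}^{k-1}}$ by rerunning \eqref{eq:aux_cond_4} at level $k-1$ is sound and merely makes explicit what the paper leaves implicit.
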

\begin{proof}
Since this preconditioner results from the one in \eqref{eq:md_precond_1} with further decomposing functions in $ H_h \mathfrak{L}^{k-1} $, the conditions in \Cref{lem:aux_space_cond} follow from \eqref{eq:aux_cond_1}--\eqref{eq:aux_cond_4}, which gives the desired result. 
\end{proof}

%The operators $ \mathfrak{A}^k_{reg} $ and $ \mathfrak{A}^{k-1}_{reg} $ represent the inner products on regular spaces $ H_h^1 \mathfrak{L}^k $ and $ H_h^1 \mathfrak{L}^{k-1} $, respectively, while $ \mathfrak{S}^k $ and $ \mathfrak{S}^{k-1} $ are the smoother operators. 

\begin{remark}
We emphasize that instead of directly applying inverses of operators $ \mathfrak{A}^k_{reg} $ and $ \mathfrak{A}^{k-1}_{reg} $, we can replace them by spectrally equivalent operators, i.e. spectrally equivalent inner products on $ H_h^1 \mathfrak{L}^k $ and $ H_h^1 \mathfrak{L}^{k-1} $.  Possible choices are multigrid methods and domain decomposition methods. 
\end{remark}

\begin{remark} \label{rem:general_problem}
For the sake of simplicity, we use model problem~\eqref{eq:Au=f} to derive the mixed-dimensional auxiliary space preconditioner.  It is also applicable to the following general problem
\begin{align*} 
\tau (\mathfrak{q}_h, \tilde{\mathfrak{q}}_h)_{L^2 \mathfrak{L}^k} 
+ (\mathfrak{d} \mathfrak{q}_h, \mathfrak{d} \tilde{\mathfrak{q}}_h)_{L^2 \mathfrak{L}^{k + 1}} 
& = (\mathfrak{f},  \tilde{\mathfrak{q}}_h)_{L^2 \mathfrak{L}^k} 
& \forall \, \tilde{\mathfrak{q}}_h \in H_h \mathfrak{L}^k.
\end{align*}
with $\tau > 0$.  In fact, such problem appears in the example presented in \Cref{sec:practical example} when the mixed-dimensional permeability is a constant.
\end{remark}

\section{A Practical Example: Flow in Fractured Porous Media} 
\label{sec:practical example}

This section presents a practical example in which the theory from the previous sections comes to use. We consider the setting of flow in fractured porous media in which fractures and intersections are modeled as lower-dimensional manifolds. The goal is to solve for a mass-conservative flow field consisting of a flux and a pressure variable. The flux $\mathfrak{q}$ is considered as a mixed-dimensional $(n-1)$-form whereas the pressure distribution $\mathfrak{p}$ is represented by a mixed-dimensional $n$-form \cite{boon:flow2018,nordbotten2017modeling}. With respect to the diagram \eqref{eq: De Rham complex}, this model therefore focuses on the bottom two rows.

We consider the natural case of $n = 3$. Then, the flux is defined as a 3-vector in the three-dimensional surroundings, a 2-vector in the two-dimensional fractures, and a scalar in the one-dimensional intersections between fractures. On the other hand, the pressure is defined as a scalar on all manifolds $\Omega_i$ with $i \in I$. 
In this case, in stead of using $\mathfrak{d}$, we denote $\mathfrak{D}$ as the mixed-dimensional differential, which is an analogue of the operator $\nabla$. Then we represent the complex \eqref{eq: De Rham complex L} in the same manner as \eqref{eq: FD de Rham represent}
\begin{equation*}
	\begin{tikzcd}
		H(\mathfrak{D}, \Omega) \arrow[r,"\mathfrak{D}"]
		& H(\mathfrak{D} \times, \Omega) \arrow[r,"\mathfrak{D} \times"] 
		& H(\mathfrak{D} \cdot, \Omega) \arrow[r,"\mathfrak{D} \cdot"] 
		& L^2(\Omega)
	\end{tikzcd}
\end{equation*}

The mixed formulation of a fracture flow problem governed by Darcy's law and conservation of mass is then given by: Find $(\mathfrak{q, p}) \in H(\mathfrak{D} \cdot, \Omega) \times L^2(\Omega) $ such that
\begin{subequations} \label{eq:flow_system}
\begin{align}
	(\mathfrak{K}^{-1} \mathfrak{q, \tilde{q}})_{L^2 \mathfrak{L}^{2}}
	- (\mathfrak{D \cdot \tilde{q}, p})_{L^2 \mathfrak{L}^3}
	&= 0, &
	\forall \, \tilde{\mathfrak{q}} &\in H(\mathfrak{D} \cdot, \Omega), \\
	(\mathfrak{D \cdot q, \tilde{p}})_{L^2 \mathfrak{L}^3}
	&= (\mathfrak{f, \tilde{p}})_{L^2 \mathfrak{L}^3}, &
	\forall \, \tilde{\mathfrak{p}} &\in L^2 \mathfrak{L}^3,
\end{align}
\end{subequations}
where
\begin{subequations} 
	\begin{align}
	(\mathfrak{K}^{-1} \mathfrak{q, \tilde{q}})_{L^2 \mathfrak{L}^{2}} 
	&:=
	\sum_{d = 1}^n \sum_{i \in I^d} (K^{-1} q_i, \tilde{q}_i )_{\Omega_i} 
	+ \sum_{j \in I_j^{d - 1}} (K_\nu^{-1} \nu_j \cdot q_i, \nu_j \cdot \tilde{q}_i )_{\partial_j \Omega_i} \\
	(\mathfrak{D \cdot q, \tilde{p}})_{L^2 \mathfrak{L}^3},
	&:=
	\sum_{d = 0}^n \sum_{i \in I^d} (\iota_i (\mathfrak{D \cdot q}), \tilde{\mathfrak{p}}_i)_{\Omega_i}.
	\end{align}
\end{subequations}
Here, $\mathfrak{f}$ is a given source term. $\mathfrak{K}$ is the mixed-dimensional permeability tensor given by a tangential and a normal component, denoted by $K$ and $K_\nu$, respectively. %Let us recall the $L^2 \mathfrak{L}^k$ inner products from \Cref{sub:function_spaces} that take into account all components and defined traces, i.e.

% We review the key result from \Cref{sec:regular_decomposition} for this case. Specifically, \Cref{thm: regular decomposition} states that the flux field $\mathfrak{q}$ can be decomposed in a stable manner such that
% \begin{align}
% 	\mathfrak{q} = \mathfrak{a} + \mathfrak{D \times c}.
% \end{align}
% Here, $\mathfrak{a}$ is a weakly differentiable vector-field defined in the bulk, as well as the fractures and intersection lines. We emphasize that $\mathfrak{a}$ has higher regularity than $\mathfrak{q}$. In particular, we have $a_i \in (H^1(\Omega_i))^{d_i}$ whereas $q_i$ is merely in $H(\nabla \cdot, \Omega_i)$ for all $i \in I$ with $d_i \ge 1$. 
% Moreover, Corollary~\ref{cor: regular inverse} shows that the field $\mathfrak{a}$ has the same mixed-dimensional divergence as $\mathfrak{q}$.

% The quantity $\mathfrak{c}$ in this decomposition is represented by a 3-vector $c_i \in (H^1(\Omega_i))^3$ in each top-dimensional subdomain $\Omega_i$ with $i \in I^3$. On the other hand, in each fracture plane $\Omega_i$ with $i \in I^2$, it is defined as a scalar field $c_i \in H^1(\Omega_i)$.

\subsection{Discrete Problem}

For the discretization, we follow \Cref{sec:discretization} and choose the finite element spaces given by the final two rows in diagram \eqref{eq: discrete de rham}. Note that this corresponds to the mixed finite element scheme presented and analyzed in \cite{boon:flow2018}. In short, we choose $H_h \mathfrak{L}^{2} \times H_h \mathfrak{L}^3$ as in diagram \eqref{eq: discrete de rham} and consider the discrete problem: Find $(\mathfrak{q}_h, \mathfrak{p}_h) \in H_h \mathfrak{L}^{2} \times H_h \mathfrak{L}^3$ such that
\begin{subequations} \label{eq:flow_system_h}
\begin{align}
	(\mathfrak{K}^{-1} \mathfrak{q}_h, \tilde{\mathfrak{q}}_h)_{L^2 \mathfrak{L}^{2}}
	- (\mathfrak{D \cdot \tilde{q}}_h, \mathfrak{p}_h)_{L^2 \mathfrak{L}^3}
	&= 0, &
	\forall \, \tilde{\mathfrak{q}}_h &\in H_h \mathfrak{L}^{2}, \\
	(\mathfrak{D \cdot q}_h, \mathfrak{\tilde{p}}_h)_{L^2 \mathfrak{L}^3}
	&= (\mathfrak{f, \tilde{p}}_h)_{L^2 \mathfrak{L}^3}, &
	\forall \, \tilde{\mathfrak{p}}_h &\in H_h \mathfrak{L}^3.
\end{align}
\end{subequations}
We briefly verify that problem \eqref{eq:flow_system_h} is well-posed. For that, we define the weighted norms
\begin{subequations} \label{eq:weighted_norms}
	\begin{align}
		\| \mathfrak{q}_h \|_{H_{\alpha}\mathfrak{L}^{2}}^2 &:= \| \mathfrak{K}^{-\frac{1}{2}} \mathfrak{q}_h \|_{L^2 \mathfrak{L}^{2}}^2 + \alpha \| \mathfrak{D} \cdot \mathfrak{q}_h \|_{L^2 \mathfrak{L}^3}^2, \\
		\| \mathfrak{p}_h \|_{H_{\alpha}\mathfrak{L}^3}^2 &:= \alpha^{-1} \| \mathfrak{p}_h \|_{L^2 \mathfrak{L}^3}^2.
	\end{align}
\end{subequations}
Here, the scalar $\alpha$ is chosen such that $\alpha \geq \mathfrak{K}_{\min}^{-1} $ with $ \mathfrak{K}_{\min} > 0 $ being the minimal eigenvalue of $ \mathfrak{K} $. In turn, we have
\begin{align} \label{eq: bound alpha}
	\| \mathfrak{K}^{-\frac{1}{2}} \mathfrak{q}_h \|_{L^2 \mathfrak{L}^{2}}^2 &\le
	\alpha \| \mathfrak{q}_h \|_{L^2 \mathfrak{L}^{2}}^2, &
	\forall \, \mathfrak{q}_h \in H_h \mathfrak{L}^{2}.
\end{align}
A key result in the analysis of this problem is that the pair of finite element spaces $ H_h \mathfrak{L}^{2} \times H_h \mathfrak{L}^3 $ satisfies the following inf-sup condition with respect to the weighted norms \eqref{eq:weighted_norms}.

\begin{lemma} \label{lem:inf_sup_B}
	There exists a constant $ \gamma_B > 0 $ independent of the discretization parameter $ h $ and the physical parameter $ \mathfrak{K} $ such that
	\begin{equation} \label{eq:inf_sup}
		\inf\limits_{\mathfrak{p}_h \in H_h \mathfrak{L}^3} \sup\limits_{\mathfrak{q}_h \in H_h \mathfrak{L}^{2}} \dfrac{-(\mathfrak{D} \cdot \mathfrak{q}_h, \mathfrak{p}_h)_{L^2 \mathfrak{L}^3}}{\| \mathfrak{q}_h \|_{H_{\alpha}\mathfrak{L}^{2}} \| \mathfrak{p}_h \|_{H_{\alpha}\mathfrak{L}^3}} \geq \gamma_B.
	\end{equation}
\end{lemma}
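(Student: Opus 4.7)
The strategy is a Fortin-type argument: for each $\mathfrak{p}_h \in H_h \mathfrak{L}^3$ I will construct an explicit $\mathfrak{q}_h \in H_h \mathfrak{L}^{2}$ that realises the supremum in \eqref{eq:inf_sup} up to a constant independent of $h$ and $\mathfrak{K}$, and then bound the resulting ratio from below.

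Since $\mathfrak{p}_h$ is a top-dimensional form it is trivially closed, so the continuous exactness \eqref{eq: exactness} yields $\mathfrak{q}\in H\mathfrak{L}^{2}$ with $\mathfrak{d}\mathfrak{q}=-\mathfrak{p}_h$ and $\|\mathfrak{q}\|_{H\mathfrak{L}^2}\lesssim\|\mathfrak{p}_h\|_{L^2\mathfrak{L}^3}$. Because the projection error estimate \eqref{ine:Pi-approx} is only available on the regular subspace $H^1\mathfrak{L}^2$, I then invoke the mixed-dimensional regular inverse (Corollary~\ref{cor: regular inverse}) to extract $\mathfrak{a}\in H^1\mathfrak{L}^{2}$ with $\mathfrak{d}(\mathfrak{q}-\mathfrak{a})=0$ and $\|\mathfrak{a}\|_{H^1\mathfrak{L}^2}\lesssim\|\mathfrak{q}\|_{H\mathfrak{L}^2}$, so that $\mathfrak{d}\mathfrak{a}=-\mathfrak{p}_h$ still holds. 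Finally I set $\mathfrak{q}_h := \Pi_h^2\mathfrak{a} \in H_h\mathfrak{L}^{2}$.

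The commuting diagram \eqref{eq: commuting diagram}, together with the fact that $\mathfrak{p}_h$ is already discrete, gives $\mathfrak{D}\cdot\mathfrak{q}_h = \mathfrak{d}\Pi_h^2\mathfrak{a} = \Pi_h^3\mathfrak{d}\mathfrak{a} = -\Pi_h^3\mathfrak{p}_h = -\mathfrak{p}_h$. Hence the numerator in \eqref{eq:inf_sup} evaluated at this pair equals $\|\mathfrak{p}_h\|_{L^2\mathfrak{L}^3}^2$. For the denominator, a triangle inequality combined with \eqref{ine:Pi-approx} (and $h$ bounded) yields
\begin{equation*}
\|\mathfrak{q}_h\|_{L^2\mathfrak{L}^2}
\leq \|\mathfrak{a}\|_{L^2\mathfrak{L}^2} + \|(I-\Pi_h^2)\mathfrak{a}\|_{L^2\mathfrak{L}^2}
\lesssim \|\mathfrak{a}\|_{H^1\mathfrak{L}^2}
\lesssim \|\mathfrak{p}_h\|_{L^2\mathfrak{L}^3}.
\end{equation*}
Inserting this into \eqref{eq: bound alpha} and using $\mathfrak{D}\cdot\mathfrak{q}_h = -\mathfrak{p}_h$ gives $\|\mathfrak{q}_h\|_{H_\alpha\mathfrak{L}^2}^2 \lesssim \alpha\|\mathfrak{p}_h\|_{L^2\mathfrak{L}^3}^2$, whereas $\|\mathfrak{p}_h\|_{H_\alpha\mathfrak{L}^3} = \alpha^{-1/2}\|\mathfrak{p}_h\|_{L^2\mathfrak{L}^3}$ by \eqref{eq:weighted_norms}. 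The $\alpha$ powers cancel in the product, so the inf-sup ratio is uniformly bounded below by a constant $\gamma_B>0$ depending only on $\Omega$, the mesh shape-regularity, and the hidden constants of Section~\ref{sec:regular_decomposition}.

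The main subtlety is the passage from the $H\mathfrak{L}^2$-lift $\mathfrak{q}$ to a discrete lift $\mathfrak{q}_h$ with controlled $L^2$-norm: the commuting projection is only known to behave well on the regular subspace $H^1\mathfrak{L}^2$, not on general $H\mathfrak{L}^2$. The continuous regular inverse is precisely the bridge required, which is why the regular-decomposition machinery developed in Section~\ref{sec:regular_decomposition} is needed here and not only in the preconditioner analysis. Independence from $\mathfrak{K}$ is automatic since the permeability enters only through \eqref{eq: bound alpha}, which is absorbed into the choice of $\alpha$.
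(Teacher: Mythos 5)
Your proof is correct, but it takes a genuinely different route from the paper's. The paper simply invokes the discrete inf-sup condition of Lemma~3.2 in \cite{boon:flow2018}, which hands over, for each $\mathfrak{p}_h$, a discrete $\mathfrak{q}_h$ with $-(\mathfrak{D}\cdot\mathfrak{q}_h,\mathfrak{p}_h)_{L^2\mathfrak{L}^3}=\|\mathfrak{p}_h\|_{L^2\mathfrak{L}^3}^2$ and an $L^2$-bound; the remaining work is exactly the $\alpha$-weighting computation you also perform, in which the powers of $\alpha$ cancel and \eqref{eq: bound alpha} absorbs the $\mathfrak{K}$-dependence. You instead construct the discrete lift yourself: exactness \eqref{eq: exactness} of the continuous complex gives an $H\mathfrak{L}^2$-potential of $-\mathfrak{p}_h$, the regular inverse (\Cref{cor: regular inverse}) upgrades it to $H^1\mathfrak{L}^2$ so that the approximation estimate \eqref{ine:Pi-approx} applies, and the commuting projection \eqref{eq: commuting diagram} together with $\Pi_h^3\mathfrak{p}_h=\mathfrak{p}_h$ preserves the divergence identity at the discrete level. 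This is precisely the mechanism the paper itself uses in \Cref{lem: semidiscrete decomposition}, so your argument is internally consistent with the paper's toolkit and makes the lemma self-contained rather than outsourced; the price is that it leans on the existence of the bounded commuting cochain projections $\Pi_h^k$ (assumed via \cite{falk2014local}), whereas the citation route leans on the external reference. Both yield a constant $\gamma_B$ depending only on $\Omega$ and shape regularity, independent of $h$ and $\mathfrak{K}$.
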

\begin{proof}
	For any given $ \mathfrak{p}_h \in H_h \mathfrak{L}^3 $, according to the inf-sup condition proven in \cite{boon:flow2018} (Lemma 3.2), there exists a $\mathfrak{q}_h \in H_h \mathfrak{L}^{2}$ such that
	\begin{align*}
		 -(\mathfrak{D} \cdot  \mathfrak{q}_h, \mathfrak{p_h})_{L^2 \mathfrak{L}^3} & = \| \mathfrak{p}_h \|_{L^2 \mathfrak{L}^3}^2, \\
		 \| \mathfrak{K}^{-\frac{1}{2}} \mathfrak{q}_h \|_{L^2 \mathfrak{L}^{2}}^2
		 + \| \mathfrak{D} \cdot \mathfrak{q}_h \|_{L^2 \mathfrak{L}^3}^2 
		 & \lesssim \| \mathfrak{p}_h \|_{L^2 \mathfrak{L}^3}^2.
	\end{align*}
	Using these properties and \eqref{eq: bound alpha}, it follows that
	\begin{align*}
		- (\mathfrak{D} \cdot  \mathfrak{q}_h, \mathfrak{p}_h)_{L^2 \mathfrak{L}^3} 
		& = \| \mathfrak{p}_h \|_{L^2 \mathfrak{L}^3}^2 \\
		& = \left(\alpha^{-\frac{1}{2}} \| \mathfrak{p}_h \|_{L^2 \mathfrak{L}^3} \right)	
		\left(\alpha^{\frac{1}{2}} \| \mathfrak{p}_h \|_{L^2 \mathfrak{L}^3} \right) \\
		& \gtrsim 
		\| \mathfrak{p}_h \|_{H_{\alpha}\mathfrak{L}^3} 
		\left(\alpha \| \mathfrak{q}_h \|_{L^2 \mathfrak{L}^{2}}^2 
		+ \alpha \| \mathfrak{D} \cdot \mathfrak{q}_h \|_{L^2 \mathfrak{L}^3}^2 \right)^\frac{1}{2} \\
		& \ge 
		\| \mathfrak{p}_h \|_{H_{\alpha}\mathfrak{L}^3} \| \mathfrak{q}_h \|_{H_{\alpha}\mathfrak{L}^{2}}.
	\end{align*} 
	This completes the proof. 
	% Hence the inf-sup constant $\gamma_B$ is the same as in \cite{boon:flow2018}.
\end{proof}

Based on \Cref{lem:inf_sup_B}, we can show the well-posedness of problem \eqref{eq:flow_system_h} by introducing the following spaces and weighted norms. Let $ \mathfrak{X} := H_h \mathfrak{L}^{2} \times  H_h \mathfrak{L}^3$ and $ \mathfrak{X}' $ be the corresponding dual space. Let the energy norm on $ \mathfrak{X} $ be given by
\begin{equation} \label{eq:energy_norm}
	\vertiii{\mathfrak{x}}_{\mathfrak{X}}^2 = \vertiii{(\mathfrak{q}_h, \mathfrak{p}_h)}_{\mathfrak{X}}^2 = \| \mathfrak{q}_h \|_{H_{\alpha}\mathfrak{L}^{2}}^2 + \| \mathfrak{p}_h \|_{H_{\alpha}\mathfrak{L}^3}^2,
\end{equation}
which is induced by the inner product $ (\cdot, \cdot)_{\mathfrak{X}} $, i.e. $ (\mathfrak{x}, \mathfrak{x})_{\mathfrak{X}} =  \vertiii{\mathfrak{x}}_{\mathfrak{X}}^2 $. In addition, let us introduce the following composite bilinear form
\begin{equation} \label{eq:bilin_form_L}
	\mathcal{L}(\mathfrak{x}, \mathfrak{y}) 
	:= (\mathfrak{K}^{-1} \mathfrak{q}_h, \tilde{\mathfrak{q}}_h)_{L^2 \mathfrak{L}^{2}} 
	- (\mathfrak{D} \cdot \tilde{\mathfrak{q}}_h, \mathfrak{p}_h)_{L^2 \mathfrak{L}^3} 
	+ (\mathfrak{D} \cdot \mathfrak{q}_h, \tilde{\mathfrak{p}}_h)_{L^2 \mathfrak{L}^3},
\end{equation}
for $ \mathfrak{x} = (\mathfrak{q}_h, \mathfrak{p}_h) $ and $ \mathfrak{y} = (\tilde{\mathfrak{q}}_h, \tilde{\mathfrak{p}}_h) $. Now we can show the problem \eqref{eq:flow_system_h} is well-posed, as presented in the following theorem. 

\begin{theorem} \label{thm:well_posed_L}
	%Let $ \mathcal{A} : \mathfrak{X} \mapsto \mathfrak{X}' $ be induced by the bilinear form $ \mathcal{L}(\cdot, \cdot) $ as in \eqref{eq:bilin_form_L}. Then 
	There exist constants $ \beta, \gamma > 0 $ independent of discretization parameter $ h $ and physical parameter $ \mathfrak{K} $ such that
	\begin{equation} \label{eq:inf_sup_L}
		\inf\limits_{\mathfrak{x} \in \mathfrak{X}} \sup\limits_{\mathfrak{y} \in \mathfrak{X}}
		\dfrac{ \mathcal{L}(\mathfrak{x}, \mathfrak{y}) }{\vertiii{ \mathfrak{x} }_{\mathfrak{X}} \vertiii{ \mathfrak{y} }_{\mathfrak{X}} } \geq \gamma
		\quad \text{and} \quad 
		| \mathcal{L}(\mathfrak{x}, \mathfrak{y}) | \leq \beta \vertiii{ \mathfrak{x} }_{\mathfrak{X}} \vertiii{ \mathfrak{y} }_{\mathfrak{X}}, \qquad \forall \, \mathfrak{x}, \mathfrak{y} \in \mathfrak{X}.
	\end{equation}
	%and, therefore, $ \mathcal{A} $ is an isomorphism.
\end{theorem}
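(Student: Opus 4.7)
The plan is to invoke the Banach–Ne\v{c}as–Babu\v{s}ka framework on the finite-dimensional space $\mathfrak{X}$: establish continuity of $\mathcal{L}$ and a discrete inf-sup condition, both with constants independent of $h$ and $\mathfrak{K}$. Continuity of $\mathcal{L}$ is the easy half. Apply Cauchy–Schwarz term by term: the first term is bounded directly by $\|\mathfrak{K}^{-1/2}\mathfrak{q}_h\|_{L^2\mathfrak{L}^2} \|\mathfrak{K}^{-1/2}\tilde{\mathfrak{q}}_h\|_{L^2\mathfrak{L}^2}$, and for each of the two divergence–pressure couplings one splits the factors as $\alpha^{1/2}\|\mathfrak{D}\cdot(\cdot)\|_{L^2\mathfrak{L}^3}$ against $\alpha^{-1/2}\|(\cdot)\|_{L^2\mathfrak{L}^3}$, matching exactly the weighted norms \eqref{eq:weighted_norms}. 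Summing yields $|\mathcal{L}(\mathfrak{x},\mathfrak{y})| \le \beta\vertiii{\mathfrak{x}}_{\mathfrak{X}}\vertiii{\mathfrak{y}}_{\mathfrak{X}}$ with $\beta$ an absolute constant.

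For the inf-sup, I plan the classical two-parameter test-function construction. Given $\mathfrak{x}=(\mathfrak{q}_h,\mathfrak{p}_h)$, use Lemma~\ref{lem:inf_sup_B} to select $\mathfrak{q}^{*}_h\in H_h\mathfrak{L}^{2}$ such that, after normalization,
\[
    -(\mathfrak{D}\cdot\mathfrak{q}^{*}_h,\mathfrak{p}_h)_{L^2\mathfrak{L}^3} \ge \gamma_B\|\mathfrak{p}_h\|_{H_{\alpha}\mathfrak{L}^3}^2, \qquad \|\mathfrak{q}^{*}_h\|_{H_{\alpha}\mathfrak{L}^{2}} = \|\mathfrak{p}_h\|_{H_{\alpha}\mathfrak{L}^3}.
\]
Then set $\mathfrak{y}=(\mathfrak{q}_h+\delta\mathfrak{q}^{*}_h,\,\mathfrak{p}_h+\mu\alpha\,\mathfrak{D}\cdot\mathfrak{q}_h)$ for small positive $\delta,\mu$ to be fixed. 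Expanding $\mathcal{L}(\mathfrak{x},\mathfrak{y})$, the cross terms $\pm(\mathfrak{D}\cdot\mathfrak{q}_h,\mathfrak{p}_h)_{L^2\mathfrak{L}^3}$ cancel by the saddle-point sign pattern, leaving
\[
    \mathcal{L}(\mathfrak{x},\mathfrak{y}) = \|\mathfrak{K}^{-1/2}\mathfrak{q}_h\|_{L^2\mathfrak{L}^{2}}^2 + \mu\alpha\|\mathfrak{D}\cdot\mathfrak{q}_h\|_{L^2\mathfrak{L}^3}^2 + \delta(\mathfrak{K}^{-1}\mathfrak{q}_h,\mathfrak{q}^{*}_h)_{L^2\mathfrak{L}^{2}} - \delta(\mathfrak{D}\cdot\mathfrak{q}^{*}_h,\mathfrak{p}_h)_{L^2\mathfrak{L}^3}.
\]
The penultimate term is controlled by Young's inequality, using $\|\mathfrak{K}^{-1/2}\mathfrak{q}^{*}_h\|_{L^2\mathfrak{L}^{2}} \le \|\mathfrak{q}^{*}_h\|_{H_{\alpha}\mathfrak{L}^{2}} = \|\mathfrak{p}_h\|_{H_{\alpha}\mathfrak{L}^3}$ (which follows from \eqref{eq: bound alpha}); the last term is bounded below by Lemma~\ref{lem:inf_sup_B}. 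Choosing $\delta = \gamma_B$ and $\mu$ small enough gives $\mathcal{L}(\mathfrak{x},\mathfrak{y}) \gtrsim \vertiii{\mathfrak{x}}_{\mathfrak{X}}^2$.

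It remains to check that $\vertiii{\mathfrak{y}}_{\mathfrak{X}} \lesssim \vertiii{\mathfrak{x}}_{\mathfrak{X}}$ with the hidden constant independent of $\mathfrak{K}$. The flux component is handled by the triangle inequality together with $\|\mathfrak{q}^{*}_h\|_{H_{\alpha}\mathfrak{L}^{2}} = \|\mathfrak{p}_h\|_{H_{\alpha}\mathfrak{L}^3}$. For the pressure component, the key calculation is
\[
    \|\mu\alpha\,\mathfrak{D}\cdot\mathfrak{q}_h\|_{H_{\alpha}\mathfrak{L}^3}^2 = \mu^2\alpha\|\mathfrak{D}\cdot\mathfrak{q}_h\|_{L^2\mathfrak{L}^3}^2 \le \mu^2\|\mathfrak{q}_h\|_{H_{\alpha}\mathfrak{L}^{2}}^2,
\]
so the perturbation does not inflate $\vertiii{\mathfrak{y}}_{\mathfrak{X}}$ beyond $\vertiii{\mathfrak{x}}_{\mathfrak{X}}$ up to a universal constant. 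Combining yields the inf-sup constant $\gamma > 0$ as required.

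The main obstacle is tracking the $\mathfrak{K}$-dependence cleanly: the weight $\alpha$ must appear in exactly the right places (as a factor of $\mathfrak{D}\cdot\mathfrak{q}_h$ in $\tilde{\mathfrak{p}}_h$, and as $\alpha^{\pm 1/2}$ in the continuity splits) so that the scaling $\alpha^{-1/2}$ of $\mathfrak{p}_h$ in $\vertiii{\cdot}_{\mathfrak{X}}$ is compatible with every Young's and Cauchy–Schwarz step. The key enabler is that Lemma~\ref{lem:inf_sup_B} already delivers the stability in precisely these weighted norms, and the bound \eqref{eq: bound alpha} converts between the $L^2\mathfrak{L}^{2}$-norm and the $\mathfrak{K}^{-1/2}$-weighted $L^2\mathfrak{L}^{2}$-norm without introducing any $\mathfrak{K}$-dependent constants.
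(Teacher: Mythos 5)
Your proposal is correct and follows essentially the same route as the paper: Cauchy--Schwarz with the $\alpha^{\pm 1/2}$ splits for continuity, and for the inf-sup the test function $\mathfrak{y}=(\mathfrak{q}_h+\gamma_B\mathfrak{r}_h,\,\mathfrak{p}_h+\alpha\,\mathfrak{D}\cdot\mathfrak{q}_h)$ built from Lemma~\ref{lem:inf_sup_B}, with Young's inequality absorbing the cross term (the paper simply takes your $\mu=1$ rather than ``small enough''). No substantive difference.
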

\begin{proof}
	Let $ \mathfrak{x} = (\mathfrak{q}_h, \mathfrak{p}_h) \in \mathfrak{X} $. Due to the inf-sup condition in \Cref{lem:inf_sup_B}, there exists $ \mathfrak{r}_h \in H_h \mathfrak{L}^{2} $ for this given $ \mathfrak{p}_h $ such that
	\begin{subequations} \label{eq:inf_sup_for_r}
		\begin{align} 
		- (\mathfrak{D} \cdot \mathfrak{r}_h, \mathfrak{p}_h)_{L^2 \mathfrak{L}^3} & \geq \gamma_B 
		\| \mathfrak{p}_h \|_{H_{\alpha} \mathfrak{L}^3}^2, \\
		\| \mathfrak{r}_h \|_{H_{\alpha}\mathfrak{L}^{2}}^2 & =
		\| \mathfrak{p}_h \|_{H_{\alpha} \mathfrak{L}^3}^2.
		\end{align}
	\end{subequations}
	Then, choose $ \mathfrak{y} = (\tilde{\mathfrak{q}}_h, \tilde{\mathfrak{p}}_h) $ such that $ \tilde{\mathfrak{q}}_h = \mathfrak{q}_h + \gamma_B \mathfrak{r}_h $ and $ \tilde{\mathfrak{p}}_h = \mathfrak{p}_h + \alpha \mathfrak{D} \cdot \mathfrak{q}_h $, and use \eqref{eq:inf_sup_for_r} together with Cauchy-Schwarz inequality, we have
	\begin{align*}
		\mathcal{L}(\mathfrak{x}, \mathfrak{y}) 
		& = (\mathfrak{K}^{-1} \mathfrak{q}_h, \mathfrak{q}_h + \gamma_B \mathfrak{r}_h)_{L^2 \mathfrak{L}^{2}} 
		- (\mathfrak{D} \cdot (\mathfrak{q}_h + \gamma_B \mathfrak{r}_h), \mathfrak{p}_h)_{L^2 \mathfrak{L}^3} 
		+ (\mathfrak{D} \cdot \mathfrak{q}_h, \mathfrak{p}_h + \alpha \mathfrak{D} \cdot \mathfrak{q}_h)_{L^2 \mathfrak{L}^3} \\
		& = \| \mathfrak{K}^{-\frac{1}{2}} \mathfrak{q}_h \|_{L^2 \mathfrak{L}^{2}}^2 
		+ \gamma_B (\mathfrak{K}^{-1} \mathfrak{q}_h, \mathfrak{r}_h)_{L^2 \mathfrak{L}^{2}} 
		-\gamma_B (\mathfrak{D} \cdot \mathfrak{r}_h, \mathfrak{p}_h)_{L^2 \mathfrak{L}^3} 
		+ \alpha \| \mathfrak{D} \cdot \mathfrak{q}_h \|_{L^2 \mathfrak{L}^3}^2 \\
		& \geq \| \mathfrak{K}^{-\frac{1}{2}} \mathfrak{q}_h \|_{L^2 \mathfrak{L}^{2}}^2 
		- \frac{1}{2} \| \mathfrak{K}^{-\frac{1}{2}} \mathfrak{q}_h \|^2_{L^2 \mathfrak{L}^{2}} - \frac{\gamma_B^2}{2} \| \mathfrak{K}^{-\frac{1}{2}} \mathfrak{r}_h \|^2_{L^2 \mathfrak{L}^{2}} 
		+ \gamma_B^2 \| \mathfrak{p}_h \|_{H_{\alpha} \mathfrak{L}^3}^2 
		+ \alpha \| \mathfrak{D} \cdot \mathfrak{q}_h \|_{L^2 \mathfrak{L}^3}^2 \\
		& \geq \frac{1}{2} \|\mathfrak{K}^{-\frac{1}{2}}  \mathfrak{q}_h \|_{L^2\mathfrak{L}^2}^2 
		- \frac{\gamma_B^2}{2} \| \mathfrak{r}_h \|^2_{H_{\alpha}\mathfrak{L}^2} 
		+ \gamma_B^2 \| \mathfrak{p}_h \|_{H_{\alpha} \mathfrak{L}^3}^2 
		+ \alpha \| \mathfrak{D} \cdot \mathfrak{q}_h \|_{L^2 \mathfrak{L}^3}^2\\
		& = \frac{1}{2} \|\mathfrak{K}^{-\frac{1}{2}}  \mathfrak{q}_h \|_{L^2\mathfrak{L}^2}^2  + \frac{\gamma_B^2}{2} \| \mathfrak{p}_h \|_{H_{\alpha} \mathfrak{L}^3}^2 
		+ \alpha \| \mathfrak{D} \cdot \mathfrak{q}_h \|_{L^2 \mathfrak{L}^3}^2\\
		& \geq \dfrac{1}{2} \min\{1, \gamma_B^2\} ( \| \mathfrak{q}_h \|_{H_{\alpha}\mathfrak{L}^{2}}^2 +  \| \mathfrak{p}_h \|_{H_{\alpha}\mathfrak{L}^3}^2 ) \\
		& = \dfrac{1}{2} \min\{1, \gamma_B^2\} \vertiii{ \mathfrak{x} }_{\mathfrak{X}}^2.
	\end{align*}
On the other hand, using continuity of the norms and Cauchy-Schwarz inequality, it is straighforward to verify that $ \vertiii{ \mathfrak{y} }_{\mathfrak{X}}^2 \leq \dfrac{\sqrt{2}}{2} \vertiii{ \mathfrak{x} }_{\mathfrak{X}}^2 $, and that gives the first condition in \eqref{eq:inf_sup_L}. The same arguments can be applied to get the second condition on $ \mathcal{L}(\cdot, \cdot) $ in \eqref{eq:inf_sup_L}, which concludes the proof.
\end{proof}

\subsection{Block Preconditioners based on Auxiliary Space Preconditioning}
\label{sub:preconditioner}

%In addition, 

Let $ \langle \cdot, \cdot \rangle $ denote the duality pairing between a function space and its dual. The discrete system \eqref{eq:flow_system_h} can be represented by the following block operator form
\begin{equation} \label{eq:alg_form}
\mathcal{A}
\begin{pmatrix}
\mathfrak{q}_h \\ \mathfrak{p}_h
\end{pmatrix}
=
\begin{pmatrix}
0 \\ \mathfrak{f}
\end{pmatrix}
\quad
\text{ with }
\quad
\mathcal{A} :=
\begin{pmatrix}
A_{\mathfrak{q}} & -B^T \\
B & 0
\end{pmatrix},
\end{equation}
where 
$ \langle A_{\mathfrak{q}} \mathfrak{q}_h, \tilde{\mathfrak{q}}_h \rangle 
:= (\mathfrak{K}^{-1} \mathfrak{q}_h, \tilde{\mathfrak{q}}_h)_{L^2 \mathfrak{L}^{2}} $ and 
$\langle B \mathfrak{q}_h, \tilde{\mathfrak{p}}_h \rangle 
:= (\mathfrak{D} \cdot \mathfrak{q}_h, \tilde{\mathfrak{p}}_h)_{L^2 \mathfrak{L}^3} $.  

According to~\Cref{thm:well_posed_L}, $\mathcal{A}$ is an isomorphism with respect to the weighted energy norm~\eqref{eq:energy_norm}. Following the standard framework~\cite{mardal:winther}, the canonical block preconditioner for solving the linear system \eqref{eq:alg_form} is the Riesz operator $ \mathcal{B} : \mathfrak{X}' \mapsto \mathfrak{X} $ corresponding to the inner product $ (\cdot, \cdot)_\mathfrak{X} $, i.e.,
\begin{equation*} %\label{eq:riesz_map}
	(\mathcal{B} \mathfrak{f}, \mathfrak{x})_{\mathfrak{X}} =  \langle \mathfrak{f}, \mathfrak{x} \rangle, \qquad \forall \, \mathfrak{f} \in \mathfrak{X}', \; \mathfrak{x} \in \mathfrak{X}.
\end{equation*}
It follows from~\Cref{thm:well_posed_L} that,
%the preconditioned system 
%\begin{equation} \label{eq:precond_sys}
%	\mathcal{B} \mathcal{A} 
%	\begin{pmatrix}
%	\mathfrak{q}_h \\ \mathfrak{p}_h
%	\end{pmatrix} 
%	= \mathcal{B}
%	\begin{pmatrix}
%	0 \\ -\mathfrak{f}
%	\end{pmatrix},
%\end{equation}
%has a bounded condition number
\begin{equation}\label{eq:cond_no_B}
	\kappa(\mathcal{B} \mathcal{A}) = \| \mathcal{B} \mathcal{A} \|_{\mathscr{L}(\mathfrak{X}, \mathfrak{X})} \| (\mathcal{B} \mathcal{A})^{-1} \|_{\mathscr{L}(\mathfrak{X}, \mathfrak{X})} \leq \dfrac{\beta}{\gamma}.
\end{equation}
If $ \beta $ and $ \gamma $ are independent of the discretization and physical parameters, then $ \mathcal{B} $ is a robust preconditioner for linear system \eqref{eq:alg_form}. Based on the definition of the weighted energy norm~\eqref{eq:energy_norm}, the preconditioner $ \mathcal{B} $ takes the following block diagonal form
\begin{equation}
\label{eq:preconditioner}
	\mathcal{B} =
	\begin{pmatrix}
	A_{\mathfrak{q}} + \alpha B^T B & 0 \\
	0 & \alpha^{-1} A_{\mathfrak{p}}
	\end{pmatrix}^{-1}
	= 
	\begin{pmatrix}
	\left(A_{\mathfrak{q}} + \alpha B^T B \right)^{-1} & 0 \\
	0 & \alpha A_{\mathfrak{p}}^{-1}
	\end{pmatrix},
\end{equation}
where $ \langle A_{\mathfrak{p}} \mathfrak{p}_h, \mathfrak{p}_h \rangle := ( \mathfrak{p}_h, \mathfrak{p}_h)_{L^2 \mathfrak{L}^n}$.  
\begin{remark}
	The top block $ (A_{\mathfrak{q}} + \alpha B^T B)^{-1} $ in the preconditioner $ \mathcal{B} $ corresponds to applying the augmented Lagrangian method to a parameter-dependent problem \eqref{eq:alg_form}. The method is well-known and used in literature \cite{tuminaro:xu:zhu, lee:wu:xu:zikatanov, fortin2000augmented} for general elliptic problems since it effectively handles the difficulties in convergence of general iterative methods, such as the physical parameter $ \mathfrak{K} $ affecting the condition number of the linear system.
\end{remark}

In practice, directly inverting the diagonal blocks in~\eqref{eq:preconditioner} might not be feasible. To overcome this difficult, we replace the diagonal blocks by their spectrally equivalent approximation and propose the following block diagonal preconditioner, 
\begin{equation*}
%\label{eq:spectral_equi_precond}
\mathcal{M}_D = 
\begin{pmatrix}
M_{\mathfrak{q}} & 0 \\
0 & M_{\mathfrak{p}}
\end{pmatrix},
\end{equation*}
%That is, the diagonal blocks of $ \mathcal{M}_D $ are symmetric positive definite and spectrally equivalent to diagonal blocks of $ \mathcal{B} $ in the following manner,
where
%\begin{subequations}
%	\label{eq:spectral_equi_blocks}
	\begin{alignat*}{3}
	c_{1, \mathfrak{q}} \langle M_{\mathfrak{q}} \mathfrak{q}_h, \mathfrak{q}_h \rangle \,
	& \leq  \langle (A_{\mathfrak{q}} + \alpha B^T B)^{-1} \mathfrak{q}_h , \mathfrak{q}_h \rangle \,
	& \leq c_{2, \mathfrak{q}} \langle M_{\mathfrak{q}} \mathfrak{q}_h, \mathfrak{q}_h \rangle,  \\ %\label{eq:spectral_equi_blocks_a}
	c_{1, \mathfrak{p}} \langle M_{\mathfrak{p}} \mathfrak{p}_h, \mathfrak{p}_h \rangle \,
	& \leq  \hskip 0.8cm \langle \alpha A_{\mathfrak{p}}^{-1} \mathfrak{p}_h , \mathfrak{p}_h \rangle \,
	& \leq c_{2, \mathfrak{p}}  \langle M_{\mathfrak{p}} \mathfrak{p}_h, \mathfrak{p}_h \rangle, %\label{eq:spectral_equi_blocks_b}
	\end{alignat*}
%\end{subequations}	
where $c_{1,\mathfrak{q}}$, $c_{1,\mathfrak{p}}$, $c_{2,\mathfrak{q}}$, and $c_{2,\mathfrak{p}}$ are positive constants independent of discretization and physical parameters. Following \cite{mardal:winther, budisa:hu} and using \Cref{thm:well_posed_L} and \eqref{eq:cond_no_B}, the condition number of $ \mathcal{M}_D \mathcal{A} $ can be directly estimated as 
\begin{equation*}
%\label{eq:cond_no_M}
\kappa(\mathcal{M}_D \mathcal{A}) \leq \dfrac{\beta c_2}{ \gamma c_1}, 
\end{equation*}
for $c_2 = \max\{c_{2,\mathfrak{q}}, c_{2,\mathfrak{p}}\}$ and $c_1 = \min \{ c_{1,\mathfrak{q}}, c_{1,\mathfrak{p}} \}$. Again, if $\beta$, $\gamma$, $c_1$, and $c_2$ are independent of the discretization and physical parameters, then $\mathcal{M}_D$ is a robust preconditioner as well. 

Now we discuss our choices of $M_{\mathfrak{q}}$ and $M_{\mathfrak{p}}$. We start with $M_{\mathfrak{p}}$.  Due to the fact that the choice of finite element space for the pressure variable is piecewise constant, it follows that the corresponding mass matrix is diagonal and thus, easily invertible.  Therefore, we take $ M_{\mathfrak{p}} = \alpha A_{\mathfrak{p}}^{-1} $ and, naturally,  $ c_{1,\mathfrak{p}} = c_{2,\mathfrak{p}} = 1 $. 

Regarding $ M_{\mathfrak{q}} $, since the first block $ A_{\mathfrak{q}} + \alpha B^T B $ corresponds to the problem 
\begin{equation} \label{eq:flux-block}
(\mathfrak{K}^{-1} \mathfrak{q}_h, \mathfrak{\tilde{q}}_h)_{L^2\mathfrak{L}^2} + \alpha (\mathfrak{D}\cdot \mathfrak{q}_h, \mathfrak{D} \cdot \mathfrak{\tilde{q}}_h)_{L^2\mathfrak{L}^3}.
\end{equation}
It is quite challenging to solve it using traditional methods due to the large kernel of the operator $\mathfrak{D} \cdot$. %\textcolor{red}{(Do we need to say/cite why? -- Ana)}.  
Therefore, we propose to use the mixed-dimensional auxiliary space preconditioner~\eqref{eq:md_precond_2}, derived in \Cref{sub:md_preconditioner}. 
%Since  However, the first block $ A_{\mathfrak{q}} + \alpha B^T B $ corresponds to the $ H_K \mathfrak{L}^{n - 1} $ inner product and becomes the main challenge when implementing the preconditioner $ \mathcal{B} $. 
%We present its components in the setting of fracture flow by starting with the regular decomposition of $\mathfrak{q}_h \in H_h \mathfrak{L}^{n - 1}$ from \Cref{sub:regular_decomposition}. 
%Let $\Pi_h^{n - 1}: H_h^1 \mathfrak{L}^{n - 1} \mapsto H_h \mathfrak{L}^{n - 1}$ denote the $L^2$ projection from nodal degrees of freedom to face-based degrees of freedom such that the first moment of the normal component is preserved on each face. Similarly, let $\Pi_h^{n - 2}: H_h^1 \mathfrak{L}^{n - 2} \mapsto H_h \mathfrak{L}^{n - 1}$ be the projection to the discrete space given in the second row of diagram \eqref{eq: discrete de rham}. This projection preserves the first moment of the tangential component on each edge for $d_i = 3$, and node values for $d_i = 2$.
The form \eqref{eq:flux-block} can be viewed as a special case of the mixed dimensional problem~\eqref{eq:md_problem} when $n = 3$, $k = 2$, and certain coefficients are added.  Directly apply the auxiliary space preconditioner~\eqref{eq:md_precond_2}, we have
\begin{equation*} %\label{eq:md_precond_3}
	\mathfrak{B}^2= 
	(\mathfrak{S}^2)^{-1} 
	+ \Pi_h^2 (\mathfrak{A}^2_{reg})^{-1} (\Pi_h^2)^* 
	+ (\mathfrak{D} \times) (\mathfrak{S}^1)^{-1} (\mathfrak{D} \times)^* 
	+ (\mathfrak{D} \times)( \Pi_h^1) (\mathfrak{A}^1_{reg})^{-1} (\Pi_h^1)^* (\mathfrak{D} \times)^* .
\end{equation*}
The smoothers $\mathfrak{S}^2$ and $\mathfrak{S}^1$ are chosen to satisfy the second condition in \Cref{lem:aux_space_cond}.  In our implementation, we use symmetric Gauss-Seidel smoothers for both cases.  Since the regular space $H_h^1 \mathfrak{L}^{2}$ is given by $d_i$-tuples of linear Lagrange elements on each $\Omega_{i, h}$ with $i \in I$ and $d_i \ge 1$ and $H_h^1 \mathfrak{L}^{1}$ is defined for $i \in I^3$ (respectively $I^2$) as a 3-vector field (respectively scalar field) of linear Lagrange elements on $\Omega_{i, h}$, $\mathfrak{A}_{reg}^{2}$ and $\mathfrak{A}_{reg}^{1}$ represent the (weighted) inner products on these spaces from \eqref{eq: regular norm}. Moreover, it is often advantageous to further substitute spectrally equivalent operators for $(\mathfrak{A}_{reg}^k)^{-1}$, $k=2,1$, denoted by $\mathfrak{B}^k_{reg}$, then the overall auxiliary space preconditioner for solving~\eqref{eq:flux-block} is

\begin{equation} \label{eq:md_precond_3}
\mathfrak{B}_{\mathfrak{q}} := 
(\mathfrak{S}^2)^{-1} 
+ \Pi_h^2 \mathfrak{B}^2_{reg}(\Pi_h^2)^* 
+ (\mathfrak{D} \times) (\mathfrak{S}^1)^{-1} (\mathfrak{D} \times)^* 
+ (\mathfrak{D} \times)( \Pi_h^1) \mathfrak{B}^1_{reg} (\Pi_h^1)^* (\mathfrak{D} \times)^*.
\end{equation}
\vskip 5mm
\noindent and our choice of $M_{\mathfrak{q}}$ is defined as solving~\eqref{eq:flux-block} by Generalize Minimal Residual (GMRES) method with $\mathfrak{B}_q$ as the preconditioner. In our implementation, $\mathfrak{B}^k_{ref}$, $k=2,1$, are defined by one W-cycle unsmoothed aggregation algebraic multigrid method. A theoretical study of their spectrally equivalence properties and thorough comparison of the different available choices is outside the scope of this work and are subjects of our future work. 
%Instead, in this work, we investigate a feasible example in the following section.

Lastly, we also consider two block triangular preconditioners
\begin{equation*}
%\label{sec:3.eq:lower_precond}
	\mathcal{M}_L = 
	\begin{pmatrix}
		M_{\mathfrak{q}}^{-1} & 0 \\
		-B & M_{\mathfrak{p}}^{-1}
	\end{pmatrix}^{-1} 
	\text{ and }
	\mathcal{M}_U = 
	\begin{pmatrix}
		M_{\mathfrak{q}}^{-1} & B^T \\
		0 & M_{\mathfrak{p}}^{-1}
	\end{pmatrix}^{-1},
\end{equation*}
where $ \mathcal{M}_L $ serves as a uniform left and $ \mathcal{M}_U $ as a uniform right preconditioner for solving~\eqref{eq:alg_form}. It can be proven that $ \mathcal{M}_L $ and $ \mathcal{M}_U $ are so-called field-of-value (FoV) equivalent preconditioners based on the well-posdeness conditions \eqref{eq:inf_sup_L} and proper inner product induced by $\mathcal{M}_D$.  We refer the reader to \cite{loghin:wathen, budisa:hu, alder:hu:rodrigo:zikatanov, adler:gaspar:hu:ohm:rodrigo:zikatanov} for a more detailed theoretical analysis on these preconditioners and restrict our focus on their numerical performances in the next section. 

%related to the spectrally equivalent diagonal preconditioner $ \mathcal{M}_D $. Specifically, they are based on the Riesz operator $ \mathcal{B} $ and its spectrally equivalent approximation $ \mathcal{M}_D $, while the FoV equivalence can be shown based on the well-posedness conditions \eqref{eq:inf_sup_L}. 

\section{Numerical Examples}
\label{sec:examples}

In this section, we propose several numerical tests to confirm the theory derived in previous sections. These tests are designed to emphasize common challenges related to mixed-dimensional problems, such as the geometric complexity and parameter heterogeneity. Also, the problems represent simplified mathematical models of common applications, in this case the model of flow in fractured porous media introduced in \Cref{sec:practical example}.

In each example, we generate separate simplicial grids on rock and fracture subdomains which combined produce mixed-dimensional geometry $ \Omega $. For the sake of simplicity, we assume that $ \Omega $ is of rectangular type and all the adjacent grids are matching. We want to point out that the analysis presented in this paper allows more flexibility in the geometrical structure. %This simplification is introduced to put attention on the performance of the preconditioners within common representations of a fractured porous medium. 

To solve the system \eqref{eq:alg_form}, we use a Flexible Generalized Minimal Residual (FGMRES) method as an \textit{outer} iterative solver and set the tolerance to be the relative residual less than $ 10^{-6} $. We precondition the outer FGMRES solver with the block preconditioners designed in \Cref{sub:preconditioner}, i.e., the block diagonal preconditioner $ \mathcal{M}_D $ and the block triangular preconditioners $ \mathcal{M}_L $ and $ \mathcal{M}_U $.  As mentioned, the pressure block $ \alpha^{-1} A_{\mathfrak{p}} $ is represented as a diagonal matrix using piecewise constant finite elements, thus the inverse is given straightforwardly. On the other hand, the flux block $ A_{\mathfrak{q}} + \alpha B^T B $ is approximated by $ M_{\mathfrak{q}} $ which is defined by GMRES method preconditioned by the mixed-dimensional auxiliary space preconditioner~$\mathfrak{B}_{\mathfrak{q}}$~\eqref{eq:md_precond_3}. We refer this as the inner solver with a relative residual tolerance set to $ 10^{-3}$.  To define $\mathfrak{B}_{\mathfrak{q}}$, we use symmetric Gauss-Seidel method as smoothers $ (\mathfrak{S}^2)^{-1} $ and $ (\mathfrak{S}^1)^{-1} $, and one application of W-cycle unsmoothed aggregation Algebraic Multigrid method (UA-AMG) as $ \mathfrak{B}^2_{reg} $ and $\mathfrak{B}^1_{reg} $.

For obtaining the mixed-dimensional geometry and discretization, we use the PorePy library \cite{porepy}, an open-source simulation tool for fractured and deformable porous media written in Python. The solving methods and preconditioners are implemented in HAZMATH library \cite{hazmath}, a finite element solver library written in C. The following numerical examples are performed on a workstation with an $8$-core 3GHz Intel Xeon "Sandy Bridge" CPU and 256 GB of RAM.

\subsection{Example: Three-dimensional Regular network}
\label{sub:example3}

\begin{figure}[htbp]
	\centering
	\includegraphics[width=0.45\textwidth]{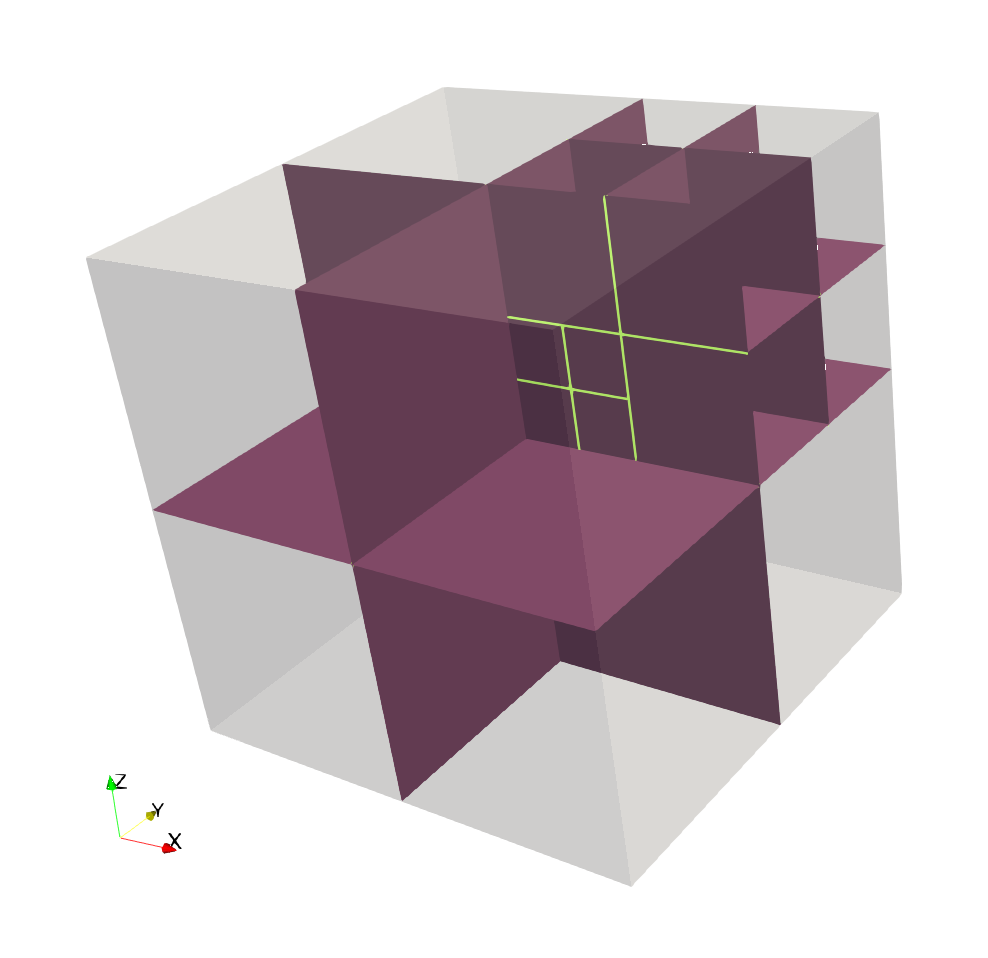}
	\includegraphics[width=0.45\textwidth]{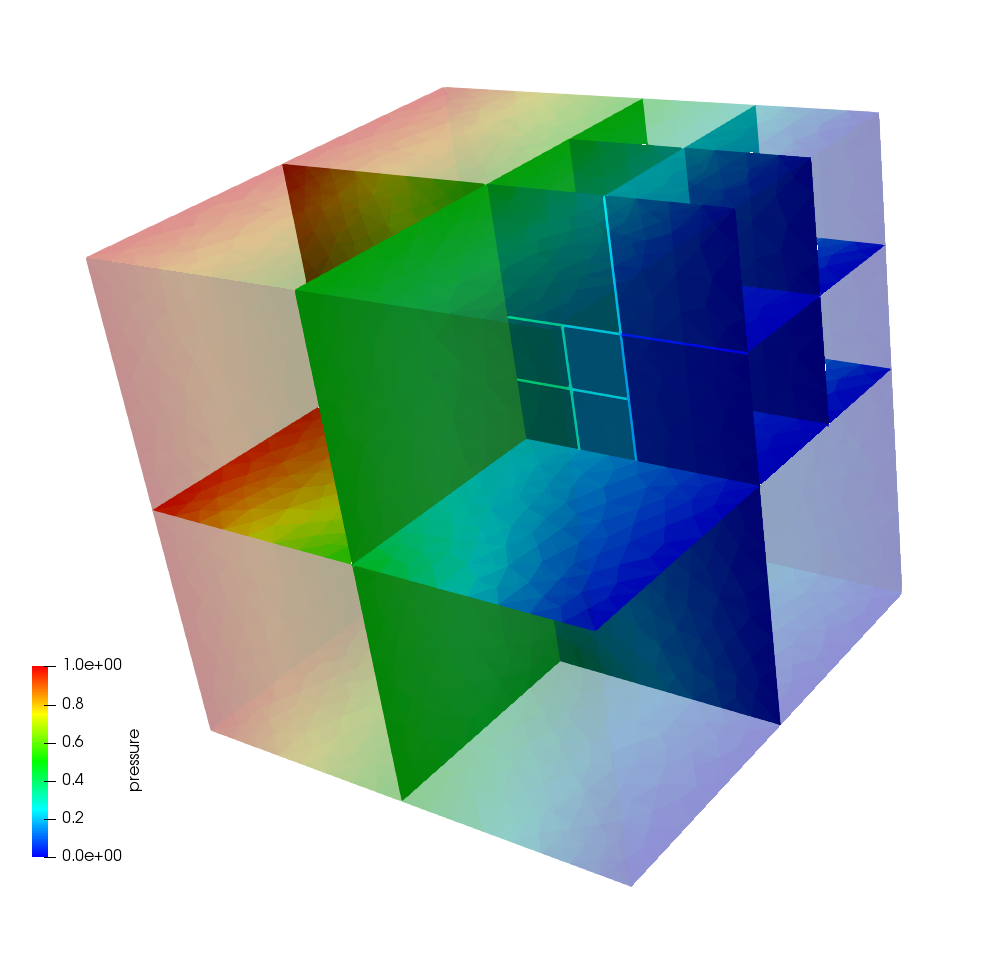}
	\caption{(Left) The three-dimensional unit cube domain in Example \ref{sub:example3} is decomposed by 9 fracture planes, 9 intersection lines and 1 intersection point. (Right) Pressure solution is presented for the case of a homogeneous permeability tensor $ \mathfrak{K} = \bm{I}$ and a mesh size $ h = 1/16 $.}
	\label{fig:geiger_3d}
\end{figure}

This example considers simulations of a 3D problem taken from the benchmark study \cite{berre2018}, that is, a three-dimensional Geiger fracture network. The rock domain is a unit cube intersected with a fracture network that consists of nine intersecting planes. The physical parameters are set as following: we take the fracture aperture to be $ 10^{-2} $ and the mixed-dimensional permeability tensor is homogeneous $ \mathfrak{K} = \bm{I} $. Within $ \mathfrak{K} $, we take into account that due to the reduced model scaling, the tangential $ K $ and the normal component $ K_{\nu} $ represent the effective values of the permeability field.  See \cite{berre2018} for more details. Furthermore, in the heterogeneous case, we consider splitting the tangential permeability into the rock matrix permeability $ K_m $ and fracture permeability $ K_f $ to allow for different flow patterns within the fracture network, either conducting or blocking the flow in the tangential direction. Also, we consider higher or lower normal permeability $ K_{\nu} $ that conducts or blocks the flow over the interface between the rock and the fractures. At the boundary, we impose pressure boundary conditions with unitary pressure drop from $ x = 0 $ to $ x = 1 $ boundary planes. The boundary conditions are applied to both the rock matrix and the fracture network. A graphical illustration of the geometry and the numerical solution is given in Figure \ref{fig:geiger_3d}.

Our goal is to investigate the robustness of the block preconditioners in \Cref{sub:md_preconditioner}. with respect to discretization parameter $ h $ and physical parameter  $ \mathfrak{K} $. We also vary the scaling parameter $ \alpha $ to study the influence on the convergence rate of the solver and how it changes with the heterogeneous permeability field. We compute and compare number of iterations of the outer and inner solver, as well as the elapsed process (CPU) time of the solver with regards to the number of degrees of freedom.

\begin{table}[htbp]
	\centering
	\scalebox{0.95}{
	\begin{tabular}{l|r||r|r|r||r|r|r||r|r|r}
		\hline \hline
		& \multicolumn{1}{c||}{} & \multicolumn{3}{c||}{$ \mathcal{M}_D $} & \multicolumn{3}{c||}{$ \mathcal{M}_L $} & \multicolumn{3}{c}{$ \mathcal{M}_U $} \\
		\cline{3-11} 
		$ h $ 		& $N_{dof}$ 	& $N_{it}$  & $T_{cpu}$ & rate  	& $N_{it}$	& $T_{cpu}$ & rate  & $N_{it}$ 	& $T_{cpu}$ & rate 		\\ \hline \hline
		$ 1/4 $ 	& 7173			& 12 \, (5) & 0.331   	& -- 		& 20 \, (5)	& 0.402   	& -- 	& 20 (4) 	& 0.351   	& -- 		\\ \hline
		$ 1/8 $ 	& 17172 		& 11 \, (6) & 0.580   	& 0.643 	& 19 \, (5)	& 0.617   	& 0.492 & 19 (5)	& 0.553   	& 0.523 	\\ \hline
		$ 1/16 $	& 89731			& 11 \, (6) & 3.229   	& 1.039 	& 19 \, (7)	& 4.265   	& 1.169	& 20 (5) 	& 3.716   	& 1.152		\\ \hline
		$ 1/32 $ 	& 518291		& 11 \, (8)	& 31.569 	& 1.300  	& 17 \, (8) & 39.499  	& 1.269	& 18 (7) 	& 37.431  	& 1.317		\\ \hline
		$ 1/64 $ 	& 3375415		& 11 (11) 	& 356.098	& 1.293	    & 17 (11)	& 482.206 	& 1.335	& 18 (9)	& 436.261 	& 1.311		\\ \hline
	\end{tabular}
	}
	\caption{Performance of the outer FGMRES solver using preconditioners $ \mathcal{M}_D $, $ \mathcal{M}_L $ and $ \mathcal{M}_U $ in \Cref{sub:example3} with regards to mesh refinement. For each preconditioner, we report number of outer (average inner) iterations $ N_{it} $ needed to reach the prescribed tolerance and overall elapsed CPU time $ T_{cpu} $. Last column presents the exponential rate of $ T_{cpu} $ of outer solver with regards to total degrees of freedom $ N_{dof} $. The permeability tensor is homogeneous and set to $ \mathfrak{K} = \bm{I} $ and the scaling parameter is set to $ \alpha = 1 $.}%
	\label{tab:example3_h}
\end{table}

The following tables consider the homogeneous permeability case with $ \mathfrak{K} = \bm{I} $ and $ \alpha = 1 $. In \Cref{tab:example3_h}, we present the results to study the robustness of the preconditioners with respect to the mesh refinement, where each row stands for a mesh twice finer than the previous one. For each preconditioner $ \mathcal{M}_D $, $ \mathcal{M}_L $ and $ \mathcal{M}_U $ we give the number of iterations $N_{it}$ of the outer FGMRES solver followed by average number of iterations of the inner GMRES solver in brackets, as well as the CPU time $T_{cpu}$ of the solving process and the exponential rate of the CPU time with regards to the number of degrees of freedom $ N_{dof} $. We clearly see that all preconditioners show that the number of iterations of the outer solver stays stable when refining the mesh, while there is a slight increase of iterations in the inner solver, which is due to our choice of $\mathfrak{B}^2_{reg}$ and $\mathfrak{B}^1_{reg}$. Therefore, we can conclude that preconditioners are robust with regards to the mesh size $ h $, but it suggests a different choice of the inner solver. As mentioned before, the inner solver performance depends on the choices of the spectrally equivalent approximations $\mathfrak{B}^2_{reg}$ and $\mathfrak{B}^1_{reg}$ of operators $ (\mathfrak{A}^2_{reg})^{-1} $ and $ (\mathfrak{A}^1_{reg})^{-1}$, respectively. These operators are represented in the nodal basis giving a Laplacian-type structure and thus, we have chosen UA-AMG as the approximation method. However, a further analysis that this UA-AMG approximation is actually spectrally equivalent is needed. Although the operators $ \mathfrak{A}^2_{reg} $ and $ \mathfrak{A}^1_{reg} $ act as a vector-Laplacian on each subdomain, they are still mixed-dimensional, and the off-diagonal coupling between the subdomains is still present which possibly diminishes the preferable structure for AMG methods. Moreover, this suboptimal behavior can be seen in the exponential rates of the CPU time $T_{cpu}$ of the total solving process with regards to the total number of degrees of freedom $ N_{dof} $. We expect $T_{cpu}$ to scale as $\mathcal{O}(N_{dof})$, giving a rate $ \approx 1 $, but all preconditioners show rate closer to $ 1.3 $. This is also visible in \Cref{fig:cpu_time} where the increase in $T_{cpu}$ fairly follows, but does not match the linear rate line. Even with a suboptimal process time performance, we still believe the preconditioners to be working well on the given problem setup, and consider the investigating proper spectrally equivalent approximations of operators $ (\mathfrak{A}^2_{reg})^{-1} $ and $ (\mathfrak{A}^1_{reg})^{-1} $ in future research.

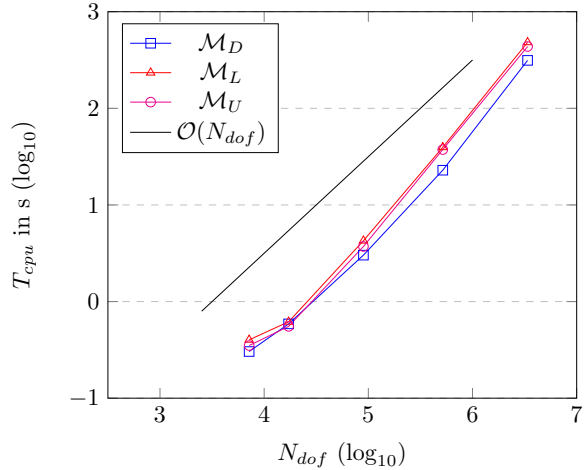
\begin{figure}[htbp]
	\centering 
	\begin{tikzpicture}[thick, scale=0.9]
	\begin{axis}[
	xlabel={$ N_{dof} $ ($\log_{10}$)},
	ylabel={$ T_{cpu} $ in s ($\log_{10}$)},
	xmin=2.5, xmax=7,
	ymin=-1, ymax=3,
	xtick={3, 4, 5, 6, 7},
	ytick={-1, 0, 1, 2, 3},
	legend pos=north west,
	ymajorgrids=true,
	grid style=dashed,
	]
	
	\addplot[
	color=blue,
	mark=square,
	]
	coordinates {
		(3.8557, -0.5171)(4.2348, -0.231)(4.9529, 0.4803)(5.7146, 1.3595)(6.5287, 2.4962)
	};
	\addlegendentry{$\mathcal{M}_D$}
	
	\addplot[
	color=red,
	mark=triangle,
	]
	coordinates {
		(3.8557, -0.3960)(4.2348, -0.2094)(4.9529, 0.6299)(5.7146, 1.5966)(6.5287, 2.6832)
	};
	\addlegendentry{$\mathcal{M}_L$}
	
	\addplot[
	color=magenta,
	mark=o,
	]
	coordinates {
		(3.8557, -0.4552)(4.2348, -0.2570)(4.9529, 0.5700)(5.7146, 1.5732)(6.5287, 2.6397)
	};
	\addlegendentry{$\mathcal{M}_U$}
	
	\addplot[
	domain=3.4:6,
	samples=40,
	color=black,
	]
	{(x - 3.5)};
	\addlegendentry{$\mathcal{O}(N_{dof})$}
	
	\end{axis}
	\end{tikzpicture}
	\caption{CPU time $ T_{cpu} $ of FGMRES solver with block preconditioners compared to total number of degrees of freedom $ N_{dof} $ of the linear system in \Cref{sub:example3}. The values of $ T_{cpu} $ and $ N_{dof} $ are taken from \Cref{tab:example3_h}. We mark $\mathcal{O}(N_{dof})$ complexity with a black continuous line.}
	\label{fig:cpu_time}
\end{figure}

\begin{table}[htbp]
	\centering
	\begin{tabular}{r|r|r|r}
		\hline \hline
		\multicolumn{1}{c||}{$\alpha$} & $\mathcal{M}_D$ & $\mathcal{M}_L$ & $\mathcal{M}_U$ \\ \hline \hline
		\multicolumn{1}{r||}{$10^0$} 		& 11 \, (8) & 17 \, (8)	& 18 \, (7)  \\ \hline
		\multicolumn{1}{r||}{$10^1$} 		& 6 \, (9) 	& 9 \, (9)	& 10 \, (8)   \\ \hline
		\multicolumn{1}{r||}{$10^2$} 	& 5 (10) 	& 7 (10)	& 7 \, (8)    \\ \hline
		\multicolumn{1}{r||}{$10^3$} 	& 4 (12) 	& 5 (11)	& 7 \, (9)     \\ \hline
		\multicolumn{1}{r||}{$10^4$} 	& 4 (13) 	& 4 (13)	& 6 (10)    \\ \hline
	\end{tabular}
	\caption{Performance of the outer FGMRES solver using preconditioners $ \mathcal{M}_D $, $ \mathcal{M}_L $ and $ \mathcal{M}_U $ in \Cref{sub:example3} with regards to varying the scaling parameter $ \alpha $. For each preconditioner, we report number of outer (average inner) iterations needed to reach the prescribed tolerance. The permeability tensor is homogeneous and set to $ \mathfrak{K} = \bm{I} $ and mesh size is set to $ h = 1/32 $.}%
	\label{tab:example3_results_alpha}
\end{table}

While still taking the permeability tensor to be homogeneous and unitary, we set the mesh size to $ h = 1/32 $ and study the performance of the preconditioners with a range of values of the parameter $ \alpha $. Although the theory suggests taking any $ \alpha \geq \mathfrak{K}_{min}^{-1} $, we consider instead $ \alpha \geq \max \{ 1, \mathfrak{K}_{min}^{-1} \} $ to achieve reasonable convergence of the underlying augmented Lagrangian method. \Cref{tab:example3_results_alpha} shows the results of the overall outer (and average inner) number of iterations for both diagonal and triangular preconditioners. As expected, the performance of block preconditioners improves with higher values of $ \alpha $ since, according to the theory of the augmented Lagrangian method~\cite{fortin2000augmented}, the iterative method should converge faster in those cases. On the other hand, increasing $ \alpha $ gives more weight on the mixed-dimensional divergence part of the inner product \eqref{eq:flux-block}, which makes the problem at each inner iteration nearly singular \cite{tuminaro:xu:zhu, lee:wu:xu:zikatanov}. This may slightly deteriorate the performance of the inner GMRES method, that mostly affects the UA-AMG method within it. Nevertheless, we find a good balance to performance of both the outer and inner solver to be around $ \alpha = \max \{1, 100 \mathfrak{K}_{min}^{-1} \} $. This can be observed in the study on the heterogeneous permeability field in \Cref{tab:example3_results_alpha_K}. Here, we set the tangential rock component of the permeability to be $ K_m = \bm{I} $, while the tangential fracture component $ K_f $ and the normal fracture component $ K_{\nu} $ in conjunction assume different values, from low to high permeable case. The results show similar behavior as in \Cref{tab:example3_results_alpha}: we get a lower number of outer iterations for  $ \alpha \gg \max \{ 1, \mathfrak{K}_{min}^{-1} \} $, but in turn the inner number of iteration increases. Therefore, in this example, we can conclude that taking $ \alpha = \max \{1, 100 \mathfrak{K}_{min}^{-1} \} $ gives the optimal performance of the preconditioned iterative method.

\begin{table}[htbp]
	\centering
	\begin{tabular}{r|r|r|r|r|r}
		\hline \hline
		\multicolumn{1}{c||}{}
		& \multicolumn{5}{c}{$ K_f = K_{\nu} $} \\
		\cline{2-6}
		\multicolumn{1}{c||}{$\alpha$} 	& $10^{-4}$ & $10^{-2}$ & $10^{0}$ 	& $10^{2}$ 	& $10^{4}$ 	\\ \hline \hline
		\multicolumn{1}{l||}{$10^{0}$} 	& --  		& -- 		& 11 \, (5)	& 5	\, (5)	& 5 (13) 	\\ \hline
		\multicolumn{1}{l||}{$10^{2}$} 	& -- 		& 11 (5) 	& 5  \, (6) & 4 \, (9)	& 4 (14) 	\\ \hline
		\multicolumn{1}{l||}{$10^{4}$} 	& 11 (5)  	& 5 (6)		& 4	 (15)	& 4 (22)	& 4 (41) 	\\ \hline
	\end{tabular}
	\caption{Performance of the outer FGMRES solver using preconditioners $ \mathcal{M}_D $, $ \mathcal{M}_L $ and $ \mathcal{M}_U $ in \Cref{sub:example3} with regards to varying the scaling parameter $ \alpha $ and the lowest eigenvalue of the permeability tensor $ \mathfrak{K}_{min} $. The variations in the eigenvalue spectrum come from the heterogeneity of the fractured porous medium: the tangential rock component of the permeability is $ K_m = \bm{I} $, while we vary the tangential fracture component $ K_f $ and the normal fracture component $ K_{\nu} $. For each preconditioner, we report number of outer (average inner) iterations needed to reach the prescribed tolerance. The mesh size is set to $ h = 1/16 $.}%
	\label{tab:example3_results_alpha_K}
\end{table}

\subsection{Example: Two-dimensional Complex Network}
\label{sub:example2}

\begin{figure}[htbp]
	\centering
	\includegraphics[valign=b, width=0.435\textwidth]{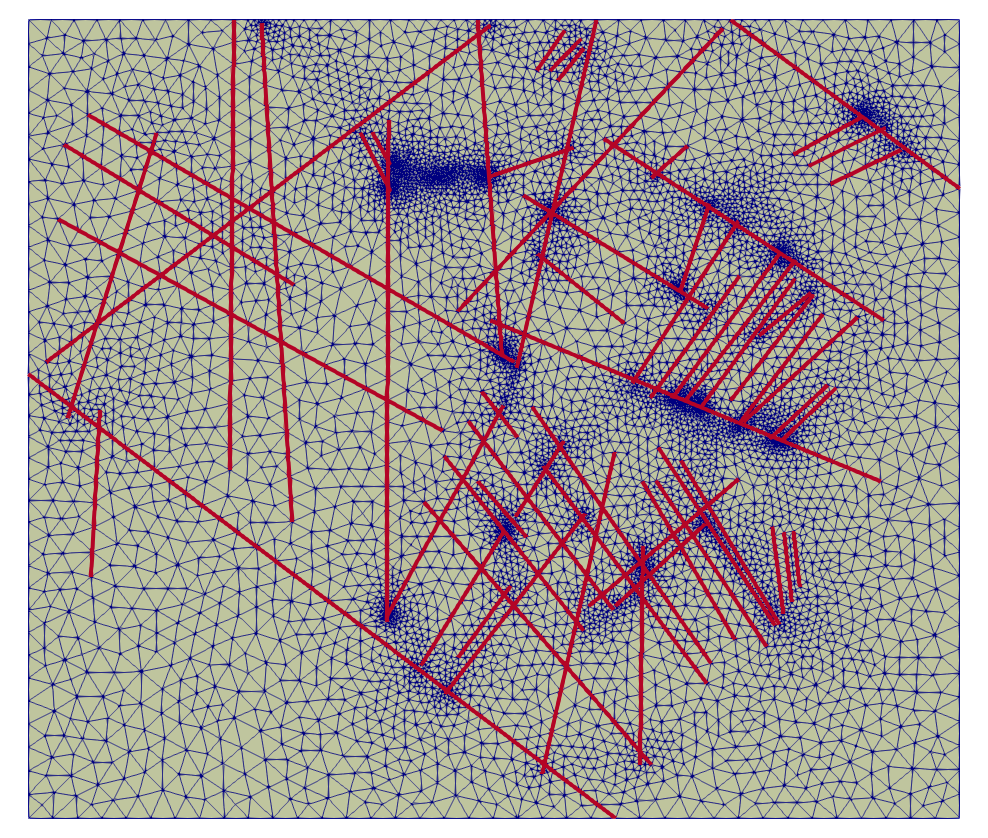}
	\includegraphics[valign=b, width=0.485\textwidth]{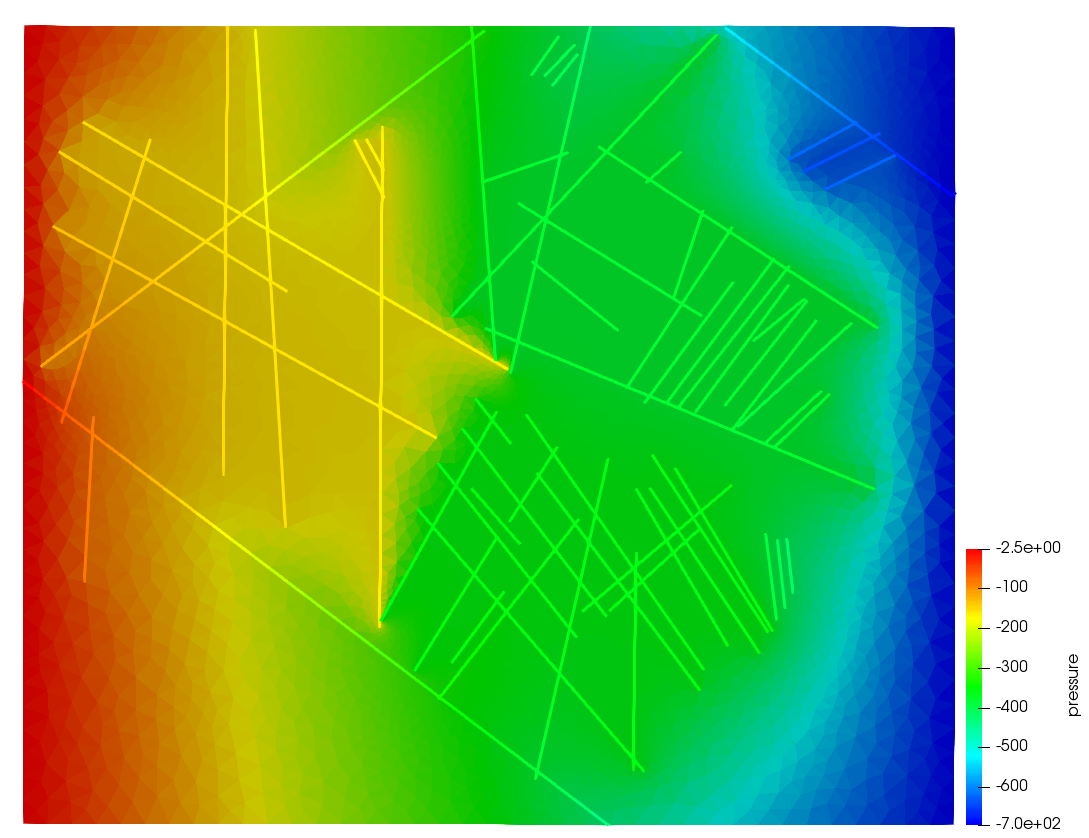}
	\caption{(Left) Graphical representation of the two-dimensional domain and fracture network
		geometry of Example \ref{sub:example2}. (Right) Pressure solution profile.}
	\label{fig:sotra_2d_domain}
\end{figure}

We provide another example, chosen from the benchmark study \cite{Flemisch2016a}, which contains a fracture network from an interpreted outcrop in the Sotra island, near Bergen, Norway. The network includes 63 fractures, all with different length. The porous medium domain spatial dimensions are $ 700 $ m $ \times $ $ 600 $ m with uniform matrix permeability $ K_m = \bm{I} $ m$ ^2 $.  All  the  fractures  have  the  same  scalar tangential and normal permeability $ K_n = K_f = 10^5 \bm{I} $ m$ ^2 $ and aperture $ \varepsilon = 10^{-2} $ m. The permeability tensors $ K_n $ and $ K_f $ are considered to be the effective values, meaning that we incorporate the aperture scaling with $ \varepsilon $ within the permeability values due to the reduced fracture modeling. See \cite{Flemisch2016a, boon:flow2018} for the detailed description of the scalings. The pressure boundary conditions are imposed on all boundaries, with a linear unitary pressure drop from the left to the right boundary. Throughout all the tests, we use a fixed mesh grid with a typical mesh size $ h = 18.75 $ m and total of 44765 degrees of freedom. See \Cref{fig:sotra_2d_domain} for an illustration of the domain, the mesh and the numerical solution of this problem.

This more realistic case of a fracture network is chosen to demonstrate the robustness of our auxiliary preconditioners, even with a larger number of fractures in the system and a complex fracture network configuration. Large-scale simulations often require handling those features of fractured porous media, as they often appear in geological rock formations in the subsurface and can significantly influence the stability of the any given solving method. In this case, the sharp tips and very acute intersections of fractures may decrease the shape regularity of the mesh, but also increase the condition number of the system and the number of unknowns, as seen in this example and \Cref{fig:sotra_2d_domain}. Therefore, we aim to show that our preconditioners still show a good performance under these challenging conditions.

\begin{table}[htbp]
	\centering
	\begin{tabular}{r||r|r|r}
		\hline \hline
		$ \alpha $	& $\mathcal{M}_D$ 	& $\mathcal{M}_L$ 	& $\mathcal{M}_U$ 	\\ \hline \hline
		$ 10^2 $  	& 40 (13)  			& 78 (10)			& 79 (9)			\\ \hline
		$ 10^3 $   	& 15 \, (9)			& 24 \, (9)			& 25 (8)			\\ \hline
		$ 10^4 $ 	& 8 \, (5) 			& 10 \, (5) 		& 11 (5) 			\\ \hline
		$ 10^5 $ 	& 5 \, (4) 			& 8 \, (5)			& 4 (5) 			\\ \hline
		$ 10^6 $ 	& 6 (28)			& 7 (11)			& 12  (4) 			\\ \hline
	\end{tabular}
	\caption{Performance of the outer FGMRES solver using preconditioners $ \mathcal{M}_D $, $ \mathcal{M}_L $ and $ \mathcal{M}_U $ in \Cref{sub:example2} with regards to varying the scaling parameter $ \alpha $. For each preconditioner, we report number of outer (average inner) iterations needed to reach the prescribed tolerance. The permeability tensors are set to $ K_m = \bm{I} $, $ K_n = K_f = 10^{5} \bm{I} $ and the mesh size is set to $ h = 18.75 $.}%
	\label{tab:example2_alpha}
\end{table}

We first consider different values of the parameter $ \alpha $, with results given in \Cref{tab:example2_alpha}. As before, the performance of the diagonal $\mathcal{M}_D$ and triangular preconditioners $\mathcal{M}_L$ and $\mathcal{M}_U$ improves with larger values of $ \alpha $, reaching relatively optimal value at $ \alpha = 10^5 $ for all three preconditioners. This is different from the previous example in \Cref{sub:example3} where the best results are given when $ \alpha = \max \{ 1,   100 \mathfrak{K}_{min}^{-1} \} $, considering that in this case we have $ \mathfrak{K}_{min}^{-1} = 10^{-5} $. However, there are many differences in the problem settings of these two examples that need to be taken into consideration. First, according to \Cref{thm:condition_number_Bk} ,the performance of the mixed-dimensional auxiliary space preconditioners can depend on the mixed-dimensional domain $ \Omega $ and the regularity of the corresponding mesh. In comparison to the example in \Cref{sub:example3}, the ambient domain in this example is two-dimensional, the domain is more rectangular-type and, due to the complex fracture network configuration, the mesh is less regular. Therefore, we expect a different behavior of both the outer FGMRES and inner GMRES solver in this example. Particularly, this can be seen in \Cref{tab:example2_alpha}, where the number of outer and inner iterations reduces for larger values of the scaling parameter $ \alpha $, though it started with a large number of iterations in all preconditioners for $ \alpha = 10^2 $, and for $ \alpha = 10^6 $ it get slightly larger again. We remind that although larger values of parameter $ \alpha $ should improve the performance of the block preconditioners, the divergence part of the inner product \eqref{eq:flux-block} now dominates, which makes it harder for the inner solver to convergence because of the problem becomes more nearly singular \cite{tuminaro:xu:zhu, lee:wu:xu:zikatanov}. %We believe that this is probably due to the UA-AMG, which has been shown to be troublesome already in the previous example. 

\begin{table}[htbp]
	\centering
	\begin{tabular}{r||r|r||r|r||r|r||r|r}
		\hline
		\multicolumn{1}{c||}{}
		& \multicolumn{8}{c}{$ \mathcal{M}_D $} \\
		\cline{2-9}
		\multicolumn{1}{c||}{}
		& \multicolumn{2}{c||}{Case 1} & \multicolumn{2}{c||}{Case 2} & \multicolumn{2}{c||}{Case 3} & \multicolumn{2}{c}{Case 4} \\
		\cline{2-9}
		\cline{2-9} \cline{2-9}
		$ N_{fracs} $ 	& $ N_{dof} $ 	& $ N_{it}$ & $ N_{dof} $	& $ N_{it}$	& $ N_{dof} $ 	& $ N_{it}$	& $ N_{dof} $ 	& $ N_{it}$	\\ \hline \hline
		1  				& 8241  		& 6 (3)  	& 8101			& 7 (2)		& 8891			& 6 (3)		& 8561			& 7 (3) 	\\ \hline
		5  				& 17661   		& 7 (3)		& 10838			& 6 (3)		& 9300			& 6 (3)		& 11751			& 6 (3)		\\ \hline
		10 				& 15809   		& 6 (3) 	& 14437			& 7 (3) 	& 9180			& 6 (3) 	& 11998			& 6 (3)		\\ \hline
		20 				& 23083  		& 7 (3) 	& 19147			& 6 (4)		& 13659			& 7 (3) 	& 17341			& 6 (4)		\\ \hline
		40 				& 31295 		& 5 (4)		& 25980			& 6 (4)		& 29032			& 7 (4) 	& 27654			& 6 (4)		\\ \hline
		63 				& 44765 		& 5 (4)		& 44765			& 5 (4)		& 44765			& 5 (4) 	& 44765			& 5 (4)		\\ \hline
	\end{tabular}
	\caption{Performance of the outer FGMRES solver using preconditioners $ \mathcal{M}_D $, $ \mathcal{M}_L $ and $ \mathcal{M}_U $ in \Cref{sub:example2} with regards to varying number of fractures $ N_{fracs} $ in the fracture network. For each preconditioner, we report number of outer (average inner) iterations $ N_{it} $ needed to reach the prescribed tolerance. The permeability tensors are set to $ K_m = \bm{I} $, $ K_n = K_f = 10^{5} \bm{I} $, the mesh size is set to $ h = 18.75 $ and the scaling parameter $ \alpha = 10^5 $.}%
	\label{tab:example2_frac_robust}
\end{table}

It is not only the case that the fracture network is more complex, we also have many more fractures included in the domain. This factor should not affect the performance of the preconditioners, which we aim to show in the next set of numerical tests. In the following, we only test the block diagonal preconditioner $ \mathcal{M}_D $ since it shows overall best behavior in comparison to the block triangular ones, this particularly evident from \Cref{tab:example2_alpha}. We also set the scaling parameter $ \alpha = 10^5 $.

We consider different numbers of fractures included in the original fracture network of 63 fractures in \Cref{fig:sotra_2d_domain}. To this end, we randomly select and gradually add more fractures to the network, starting from 1 fracture, to 5, 10, 20, 40 and ultimately all 63 fractures included. We repeat the process four times, creating four different cases, each having either 1, 5, 10, 20 or 40 fractures. See \Cref{fig:four_cases_example_2} for an illustration of pressure solutions to all four cases, each with 20 randomly selected fractures. The reason to constructing four cases is to eliminate bias in selecting fractures in specific order. We report in \Cref{tab:example2_frac_robust}, for all four cases, the number of degrees of freedom $ N_{dof} $ and the number of outer (average inner) iterations $ N_{it} $ of the FGMRES (GMRES) method preconditioned with the diagonal preconditioner $ \mathcal{M}_D $. It is clear that the preconditioned outer iterative method does not depend on the number of fractures in the fracture network, in all the cases. The same can be seen in the inner solver showing a relatively even number of iterations. Therefore, the robustness of the preconditioner $ \mathcal{M}_D $ with regards to the number of fractures in the fracture network is shown, which is consistent with the analysis in the previous sections. 
%in the almost constant number of outer and inner iterations, 
%which is in itself a good result that is consistent with the analysis in the previous sections.
% On the other hand, we again see that the performance of the inner iterative method within the preconditioner worsens adding more unknowns to the system. This is comparable to the previous example in \Cref{sub:example3}, meaning that again the UA-AMG may not be the best choice as an approximation of operators $ (\mathfrak{A}^2_{reg})^{-1} $ and $ (\mathfrak{A}^1_{reg})^{-1} $.  

\begin{figure}
	\begin{tabular}{cc}
		\includegraphics[width=65mm]{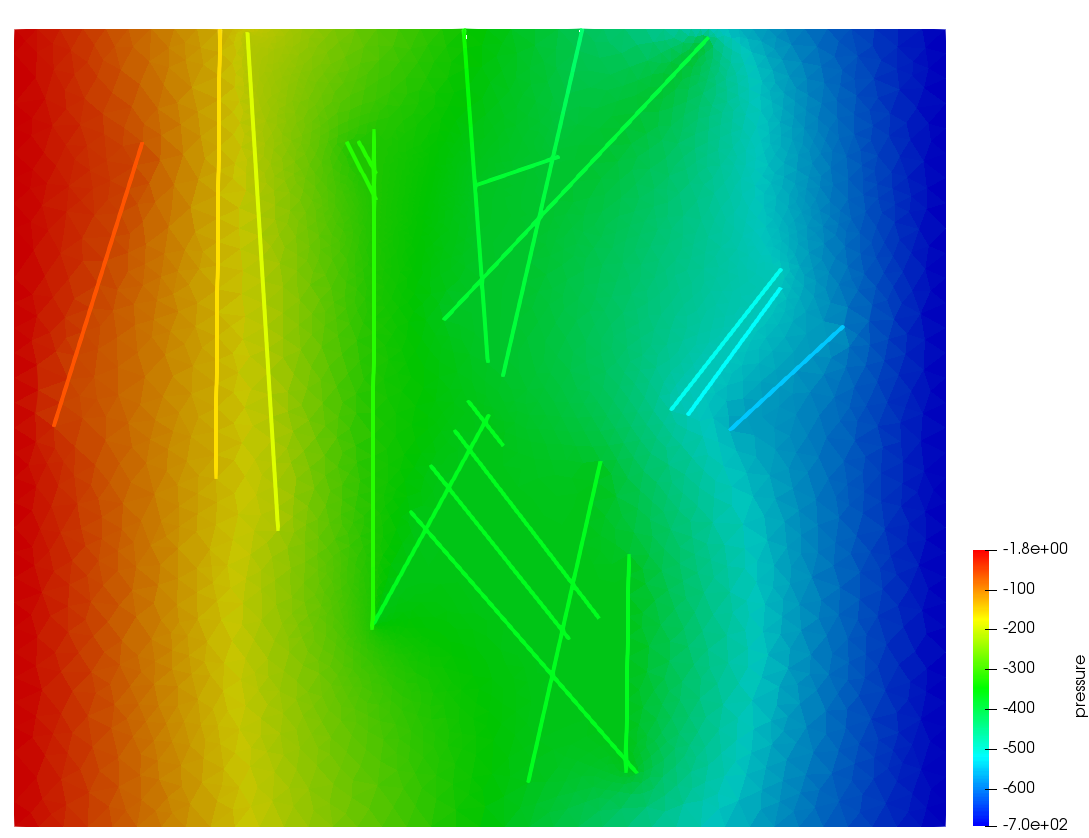} &   \includegraphics[width=65mm]{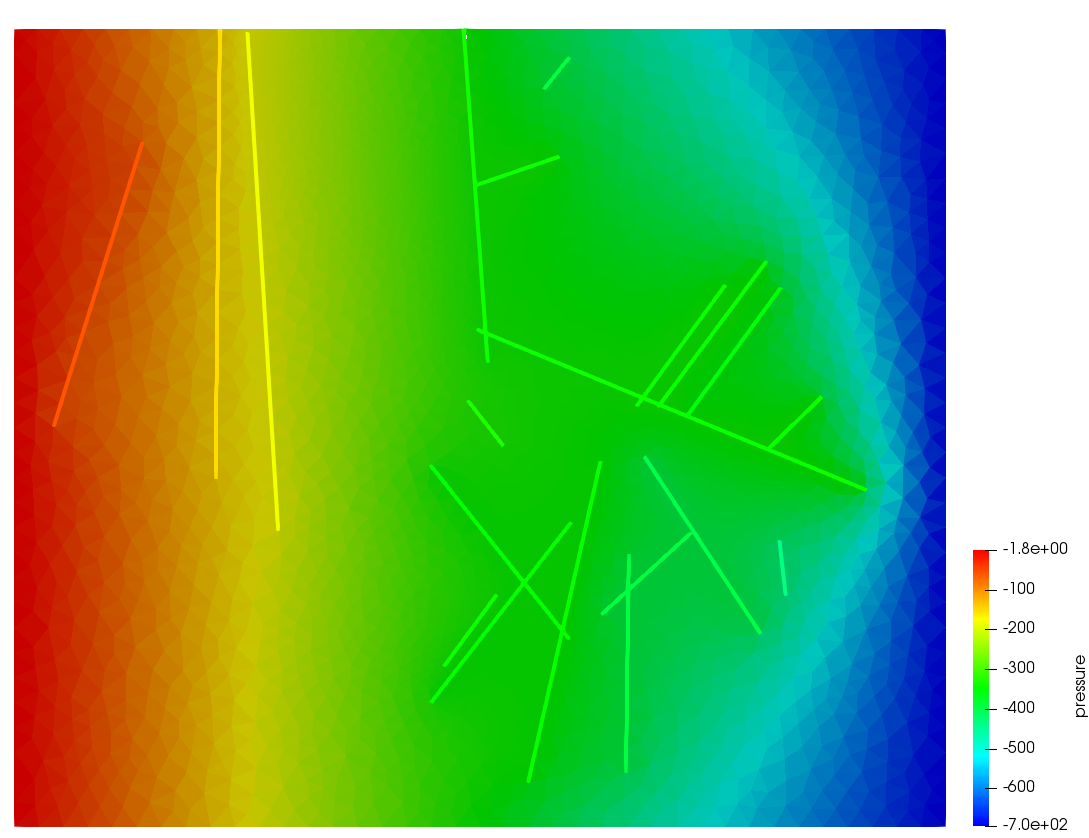} \\
		(a) Case 1 & (b) Case 2 \\[6pt]
		\includegraphics[width=65mm]{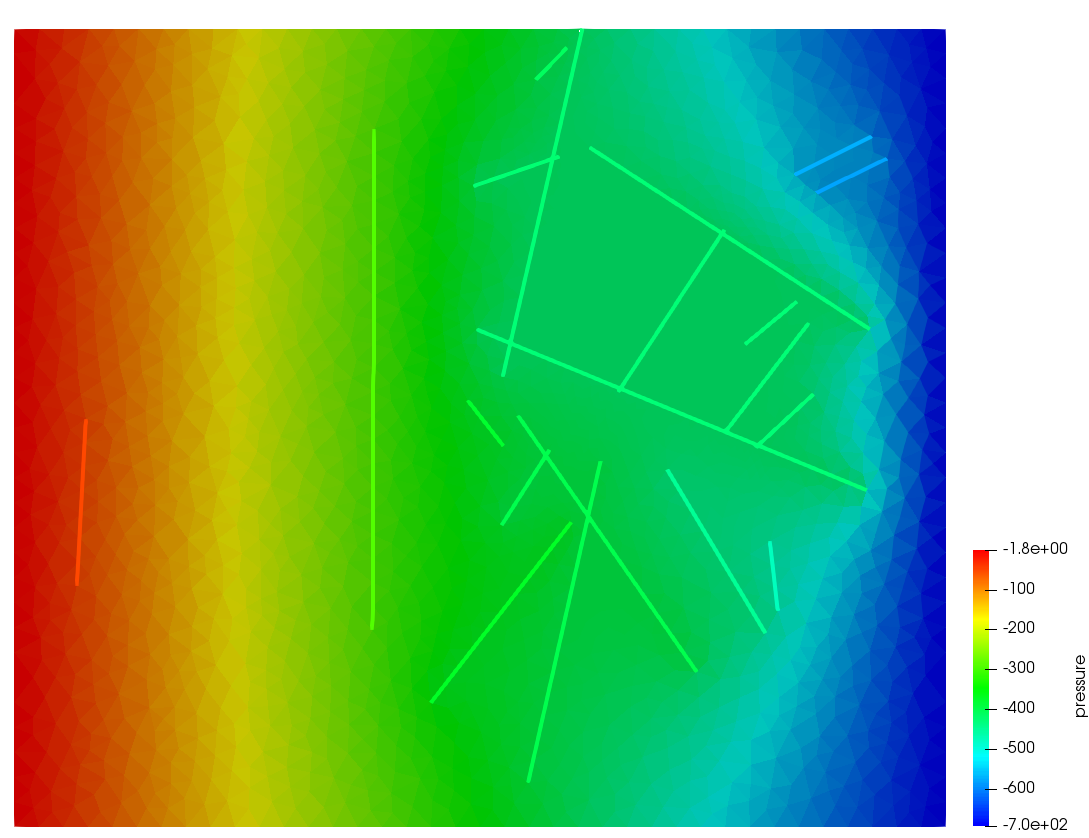} &   \includegraphics[width=65mm]{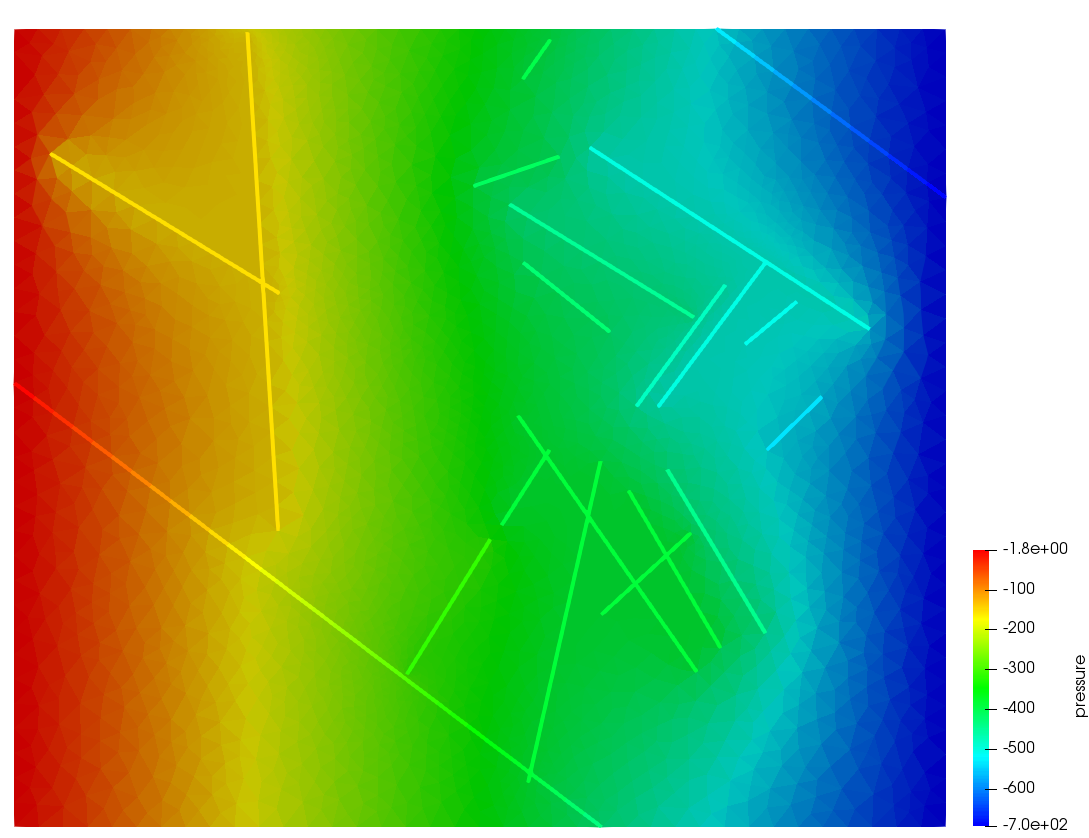} \\
		(c) Case 3 & (d) Case 4 \\[6pt]
	\end{tabular}
	\caption{Graphical representation of the pressure solution in Example \ref{sub:example2} with only 20 fractures included. Each Case 1--4 corresponds to selecting randomly 20 fractures from the original set of 63 fractures, without changing the original positioning of the selected fractures within the domain.}
	\label{fig:four_cases_example_2}
\end{figure}

\section{Conclusion} \label{sec:conclusion}

In this work, we have derived nodal auxiliary space preconditioners for discretizations of mixed-dimensional partial differential equations. In order to do so, we have extended the stable regular decomposition, both in continuous and discrete setting, to mixed-dimensional geometries. The resulting decomposition differs from the fixed-dimensional case in the way that we do not consider directly the regular inverse, but we establish the regular decomposition hierarchically by combining the regular decompositions on each sub-manifold of the mixed-dimensional domain. Based on this and the auxiliary space preconditioning framework, we propose robust preconditioners to solving mixed-dimensional elliptic problems. We demonstrate how these preconditioners are derived and implemented with an example of mixed-dimensional model of flow in fractured porous media. The robustness of the preconditioners is also verified of two benchmark numerical experiments of fractured porous media.  From the numerical experiments, we also see the need of a robust method for solving Laplacian problem in the mixed-dimensional setting in order to further improve the robustness and effectiveness of the proposed preconditioners.  This is the topic for our future work. 

% although it opens up space for further research into more effective spectrally equivalent implementations of the preconditioners.

\section{Acknowledgments}
The first author acknowledges the financial support from the TheMSES project funded by Norwegian Research Council grant 250223. The second author thanks the Deutsche Forschungsgemeinschaft (DFG, German Research Foundation) for supporting this work by funding SFB 1313, Project Number 327154368. The work of the third author is partially supported by the National Science Foundation under grant DMS-1620063. 
A special thanks is extended to Jan M. Nordbotten for valuable comments and discussions on the presented work.

% ------------------------------- References ---------------------------------- %

\bibliographystyle{siam}
\bibliography{references}

\end{document}